\newtheorem{theorem}{Theorem}[section]
\newtheorem{proposition}[theorem]{Proposition}
\newtheorem{lemma}[theorem]{Lemma}
\newtheorem{corollary}[theorem]{Corollary}
\newtheorem{definition}[theorem]{Definition}
\newtheorem{remark}[theorem]{Remark}
\newtheorem{example}[theorem]{Example}
\numberwithin{equation}{section}
\newcommand{\Rot}{\operatorname{\mathbf{curl}}}
\newcommand{\Div}{\operatorname{\mathrm{div}}}
\newcommand{\rot}{\operatorname{\mathrm{curl}}}
\newcommand{\loc}{\mathrm{loc}}
\renewcommand{\Im}{\operatorname{Im}}
\newcommand  {\C}{{\mathbb C}}
\newcommand  {\N}{{\mathbb N}}
\newcommand  {\R}{{\mathbb R}}
\newcommand {\Id}{\mathrm {I}}
\newcommand  {\D}{\mathsf D}
\renewcommand{\SS}{\boldsymbol{\mathsf S}}
\newcommand  {\TT}{\boldsymbol{\mathsf T}}
\newcommand  {\LL}{\boldsymbol{\mathsf L}}
\newcommand  {\HH}{\boldsymbol{\mathsf H}}
\newcommand  {\EE}{\boldsymbol{\mathsf E}}
\newcommand  {\Pp}{\boldsymbol{\mathsf P}} 
\newcommand {\Tr}{\operatorname{Trace}} 
\newcommand {\Jac}{\operatorname{Jac}} 
\newcommand {\com}{\operatorname{com}}
\newcommand  {\nn}{\boldsymbol{\mathsf n}}
\newcommand  {\uu}{\boldsymbol{\mathsf u}}
\newcommand  {\jj}{\boldsymbol{\mathsf j}}
\newcommand  {\mm}{\boldsymbol{\mathsf m}}
\newcommand  {\vv}{\boldsymbol{\mathsf v}}
\newenvironment{proof}{\begin{trivlist}
                       \item[]{\sc Proof. }}{\hfill $\blacksquare$
                     \end{trivlist}} 
  \newcommand{\transposee}[1]{{\vphantom{#1}}^{\mathit T}{#1}}    
\begin{document}

\title{Shape derivatives of boundary integral operators in electromagnetic scattering}
\author{M. Costabel $\sp 1$, F. Le Lou\"er $\sp 2$\\\small $\sp 1$ IRMAR, University of Rennes 1\\\small$\sp 2$ POEMS, INRIA-ENSTA,  Paris }
\date{}
\maketitle

\begin{abstract} We develop the shape  derivative analysis of solutions to the problem of scattering of time-harmonic electromagnetic waves by a bounded penetrable obstacle. Since boundary integral equations are a classical tool to solve  electromagnetic scattering problems, we study the shape differentiability properties of the standard electromagnetic boundary integral operators. To this end, we start with the G\^ateaux differentiability analysis  with respect to deformations of the obstacle of boundary integral operators with pseudo-homogeneous kernels acting between Sobolev spaces. The boundary integral operators of electromagnetism  are typically bounded on the space of tangential vector fields of mixed regularity  $\TT\HH\sp{-\frac{1}{2}}(\Div_{\Gamma},\Gamma)$. Using Helmholtz decomposition, we can base their analysis on the study of scalar integral operators in standard Sobolev spaces, but we then have to study the G\^ateaux differentiability of surface differential operators. We prove that the electromagnetic boundary integral operators are infinitely differentiable without loss of regularity and  that the solutions of the scattering problem are infinitely shape differentiable away from the boundary of the obstacle, whereas their derivatives lose regularity on the boundary. We  also give a characterization of the first  shape derivative  as a solution of a new electromagnetic scattering problem. 
\end{abstract}      

\textbf{Keywords : }  Maxwell's equations, boundary integral operators, surface differential operators, shape derivatives,  Helmholtz decomposition.

\tableofcontents

\newpage
\section*{Introduction}
Consider the scattering of time-harmonic electromagnetic waves by a bounded obstacle $\Omega$ in $\R^3$ with a smooth and simply connected boundary $\Gamma$ filled with an homogeneous dielectric material. This problem  is described by the system of Maxwell's equations, valid in the sense of distributions, with two transmission conditions on the boundary  or the obstacle guaranteeing the continuity of the tangential components of the electric and magnetic fields across the interface.  The transmission problem is completed by the Silver-M\"uller radiation condition at infinity (see \cite{Monk} and \cite{Nedelec}).  Boundary integral equations are an efficient  method to solve such problems for low and high frequencies. The dielectric scattering  problem is usually reduced to a system of two boundary integral equations for two unknown tangential vector fields on the interface (see \cite{BuffaHiptmairPetersdorffSchwab} and \cite{Nedelec}). We refer to  \cite{CostabelLeLouer} and \cite{CostabelLeLouer2} for methods developed by the authors to solve this problem using a single boundary integral equation.

 Optimal shape design with the modulus of the far field pattern of the  dielectric scattering problem as goal is of practical interest in some important fields of applied mathematics, as for example telecommunication systems and radars. The utilization of shape optimization methods  requires the analysis of the dependency of the solution on the shape of the dielectric scatterer.  An explicit form of the shape derivatives is required in view of their implementation in a shape optimization algorithms such as gradient methods or Newton's method.

 In this paper, we present a complete analysis of the shape differentiability of the solution of the dielectric scattering problem using an integral representation. Even if numerous works exist on the calculus of shape variations \cite{Hadamard, PierreHenrot, Pironneau, Simon, Zolesio}, in the framework of boundary integral equations the scientific literature is not extensive. However, one can cite the papers  \cite{Potthast2}, \cite{Potthast1} and  \cite{Potthast3}, where  R. Potthast has considered the question, starting with his PhD thesis \cite{Potthast4}, for the Helmholtz equation with Dirichlet or Neumann boundary conditions  and the perfect conductor problem, in spaces of continuous and H\"older continuous functions. Using the  integral representation of the solution, one is lead to study  the G\^ateaux differentiability of  boundary integral operators and potential operators with weakly and strongly singular kernels.
 
  The natural space of distributions (energy space) which occurs in the electromagnetic potential theory is $\TT\HH\sp{-\frac{1}{2}}(\Div_{\Gamma},\Gamma)$, the set of tangential vector fields whose components are in the Sobolev space $H\sp{-\frac{1}{2}}(\Gamma)$ and whose surface divergence is in $H\sp{-\frac{1}{2}}(\Gamma)$. 
We face two main difficulties: 
On one hand,  to be able to construct shape derivatives of the solution -- which is given in terms of products of boundary integral operators and their inverses -- it is imperative to prove that the derivatives are bounded operators between the same spaces as the boundary integral operators themselves. 
On the other hand, the very definition of shape differentiability of operators defined on $\TT\HH\sp{-\frac{1}{2}}(\Div_{\Gamma},\Gamma)$ poses non-trivial problems. Our approach consists in using the Helmholtz decomposition of this Hilbert space. In this way, we split the analysis in two main steps:  First the G\^ateaux differentiability analysis of scalar boundary integral operators and potential operators with pseudo-homogeneous kernels,  and second the study of derivatives with respect to smooth deformations of the obstacle of surface differential operators in the classical Sobolev spaces.
  
 This work contains results from the thesis \cite{FLL} where this analysis has been used to develop a shape optimization algorithm of dielectric lenses in order to obtain  a prescribed radiation pattern.
 \medskip
 
The paper is organized as follows: 

In section \ref{ScatProb} we define the scattering problem of time-harmonic electromagnetic waves at a dielectric interface and the appropriate spaces.  In section \ref{BoundIntOp}  we  recall some results about trace mappings and  boundary integral operators in electromagnetism, following the notation of \cite{BuffaHiptmairPetersdorffSchwab, Nedelec}. We then give an integral representation of the solution following \cite{CostabelLeLouer}.  In section \ref{ShapeD}, we introduce  the notion of shape derivative and its connection to G\^ateaux  derivatives. We also recall  elementary results about differentiability in Fr\'echet spaces. 

The section \ref{GDiffPH} is dedicated to the G\^ateaux differentiability analysis of a class of boundary integral operators with respect to deformations of the boundary. We generalize  the results proved in \cite{Potthast2, Potthast1} for the standard acoustic boundary integral operators,  to  the class of integral operators with pseudo-homogenous kernels. We also give higher order G\^ateaux derivatives  of coefficient functions such as the Jacobian of the change of variables associated with the deformation,  or the components of the unit normal vector. These results are new and allow us to obtain explicit forms of the derivatives of the integral operators. 

The last section contains the main results of this paper: the shape differentiability properties of the solution of the dielectric scattering problem.  We begin by discussing the difficulties of defining the shape dependency of operators defined on the shape-dependent space $\TT\HH\sp{-\frac{1}{2}}(\Div_{\Gamma},\Gamma)$, and we present an altervative  using the Helmholtz decomposition  (see \cite{delaBourdonnaye}) on the boundary of smooth domains. We then analyze the differentiability of a family of surface differential operators. Again we prove their infinite G\^ateaux differentiability and give an explicit expression of their derivatives. These results are new and important for the numerical implementation of the shape derivatives. Using the chain rule, we deduce  the infinite shape differentiability of the solution of the scattering problem away from the boundary and  an expression of the shape derivatives.  More precisely, we prove that  the boundary integral operators are infinitely G\^ateaux differentiable without loss of regularity, whereas previous results allowed such a loss \cite{Potthast3}, and we prove that the shape derivatives of the potentials are smooth far from the boundary but they lose regularity in the neighborhood of the boundary.   

These new results generalize existing results: In the acoustic case, using the variational formulation, a characterization of the first G\^ateaux derivative was given by A. Kirsch in \cite{Kirsch} for the Dirichlet problem and then for a transmission problem by F. Hettlich in \cite{Hettlich, HettlichErra}. R. Potthast used the integral equation method to obtain a characterisation of the first shape derivative of the solution of the perfect conductor scattering problem. 

We end the paper by  formulating a characterization of the first shape derivative as the solution of a new electromagnetic scattering problem. We show that both by directly deriving the boundary values and by using the integral representation of the solution, we obtain the same characterization.

\section{The dielectric scattering problem} \label{ScatProb}
Let $\Omega$ denote a bounded domain in $\R^{3}$  and let $\Omega^c$ denote the exterior domain $\R^3\backslash\overline{\Omega}$. 
In this paper, we will assume that the boundary $\Gamma$ of $\Omega$ is a smooth and simply connected closed surface, so that $\Omega$ is diffeomorphic to a ball. Let $\nn$ denote the outer unit normal vector on the boundary $\Gamma$.

In $\Omega$ (resp. $\Omega^c$) the electric permittivity $\epsilon_{i}$ (resp.
$\epsilon_{e}$) and the magnetic permeability $\mu_{i}$ (resp. $\mu_{e}$) are positive constants. The frequency $\omega$ is the same in $\Omega$ and in
$\Omega^c$.  The interior wave number $\kappa_{i}$ and the exterior wave   number $\kappa_{e}$ are complex constants of non negative imaginary part.

\textbf{Notation: }
For a domain $G\subset\R^3$ we denote by $H^s(G)$ the usual $L^2$-based Sobolev space of order $s\in\R$, and by $H^s_{\loc}(\overline G)$ the space of functions whose restrictions to any bounded subdomain $B$ of $G$ belong to $H^s(B)$. Spaces of vector functions will be denoted by boldface letters, thus 
$$
 \HH^s(G)=(H^s(G))^3\,.
$$ 
If $\D$ is a differential operator, we write:
\begin{eqnarray*}
\HH^s(\D,\Omega)& = &\{ u \in \HH^s(\Omega) : \D u \in \HH^s(\Omega)\}\\
\HH^s_{\loc}(\D,\overline{\Omega^c})&=& \{ u \in \HH^s_{\loc}(\overline{\Omega^c}) : \D u \in
\HH^s_{\loc}(\overline{\Omega^c}) \}\end{eqnarray*}
The space $\HH^s(\D, \Omega)$ is endowed with the natural graph norm. When $s=0$, this defines in particular the Hilbert spaces $\HH(\Rot,\Omega)$ and $\HH(\Rot\Rot,\Omega)$.
 We denote the $\LL^2$ scalar product on $\Gamma$ by $\langle\cdot,\cdot\rangle_{\Gamma}$.

\bigskip

The time-harmonic Maxwell's sytem can be reduced to second order equations for the electric field only.  The time-harmonic dielectric scattering problem is then formulated as follows.

\medskip
 
\textbf{The dielectric scattering problem :} 
Given an incident field 
$\EE^{inc}\in\HH_{\loc}(\Rot,\R^3)$ that satisfies 
$\Rot\Rot \EE^{inc} - \kappa_{e}^2\EE^{inc} =0$
in a neighborhood of $\overline{\Omega}$,
we seek  two fields $\EE^{i}\in \HH(\Rot,\Omega)$ and $\EE^{s}\in
\HH_{\loc}(\Rot,\overline{\Omega^c})$ satisfying the time-harmonic Maxwell equations
\begin{eqnarray}
\label{(1.2a)}
  \Rot\Rot \EE^{i} - \kappa_{i}^2\EE^{i}& = 0&\text{ in }\Omega,
\\
\label{(1.2b)} 
  \Rot\Rot \EE^{s} - \kappa_{e}^2\EE^{s} &= 0&\text{ in }\Omega^c,\end{eqnarray}
the two transmission conditions, 
\begin{eqnarray}{}
\label{T1}&\,\,\;\;\nn\times \EE^{i}=\nn\times( \EE^{s}+\EE^{inc})&\qquad\text{ on }\Gamma
\\
\label{T2} &\mu_{i}^{-1}(\nn\times\Rot \EE^{i}) = \mu_{e}^{-1}\nn\times\Rot(\EE^{s}+\EE^{inc})&\qquad\text{ on }\Gamma\end{eqnarray} 
and the Silver-M\"uller radiation condition:
\begin{equation}\label{T3}\lim_{|x|\rightarrow+\infty}|x|\left| \Rot \EE^{s}(x)\times\frac{x}{| x |}- i\kappa_{e}\EE^{s}(x) \right|
 =0.\end{equation}
The interior and exterior magnetic fields are then given by $\HH^i=\dfrac{1}{i\omega\mu_{i}}\EE^{i}$ and $\HH^s=\dfrac{1}{i\omega\mu_{e}}\EE^{s}$.
It is well known that this problem admit a unique solution for any positive real values of the exterior wave number \cite{BuffaHiptmairPetersdorffSchwab, FLL, Nedelec}.

An important quantity, which is of interest in many shape optimization problems, is the far field pattern of the electric solution, defined on the unit sphere of $\R^3$, by

$$\EE^{\infty}(\hat{x})=\lim_{|x|\rightarrow\infty}4\pi|x|\dfrac{\EE^s(x)}{e^{i\kappa_{e}|x|}},\qquad \text{ with }\dfrac{x}{|x|}=\hat{x}.$$
\section{Boundary integral operators and main properties} \label{BoundIntOp}

\subsection{Traces and tangential differential calculus}
We use surface differential operators and traces.  More details can be found in \cite{MartC,Nedelec}.

For a vector function $\vv\in(\mathscr{C}^k(\R^3))^q$ with $k,q\in\N^*$, we note $[\nabla\vv]$ the matrix whose the $i$-th column is the gradient of the $i$-th component of $\vv$ and we set $[\D\vv]=\transposee{[\nabla\vv]}$.
The tangential gradient of any scalar function $u\in\mathscr{C}^k(\Gamma)$ is defined by 
\begin{equation}\label{G}\nabla_{\Gamma}u=\nabla\tilde{u}_{|\Gamma}-\left(\nabla\tilde{u}_{|\Gamma}\cdot\nn\right)\nn,\end{equation}
and the tangential vector curl  by 
\begin{equation}\label{RR}\Rot_{\Gamma}u=\nabla\tilde{u}_{|\Gamma}\times\nn,\end{equation}
where $\tilde{u}$ is an extension of $u$ to the whole space $\R^3$.
For a vector function $\uu\in(\mathscr{C}^k(\Gamma))^3$, we note $[\nabla_{\Gamma}\uu]$ the matrix whose the $i$-th column is the tangential gradient of the $i$-th component of $\uu$ and we set  $[\D_{\Gamma}\uu]=\transposee{[\nabla_{\Gamma}\uu]}$.

We define the surface divergence of any vectorial function $\uu\in(\mathscr{C}^k(\Gamma))^3$   by \begin{equation}\label{D}\Div_{\Gamma}\uu=\Div\tilde{\uu}_{|\Gamma}-\left([\nabla\tilde{\uu}_{|\Gamma}]\nn\cdot\nn\right),\end{equation}
 and the surface scalar curl $\rot_{\Gamma_{r}}\uu_{r}$ by 
\begin{equation*}\label{R}\rot_{\Gamma}\uu=\nn\cdot\left(\Rot\tilde{\uu})\right)\end{equation*}
where $\tilde{\uu}$ is an extension of $\uu$ to the whole space $\R^3$.
These definitions do not depend on the extension. 

\begin{definition}\label{2.1} For  a vector function $\vv\in (\mathscr{C}^{\infty}(\overline{\Omega}))^3$ and a scalar function $v\in\mathscr{C}^{\infty}(\overline{\Omega})$ we define the
traces :\newline
$$\gamma v=v_{|_{\Gamma}} ,$$
$$\gamma_{D}\vv:=(\nn\times \vv)_{|_{\Gamma}}\textrm{ (Dirichlet) and}$$
$$\gamma_{N_{\kappa}}\vv:=\kappa^{-1}(\nn\times\Rot \vv)_{|_{\Gamma}}\textrm{ (Neumann).}$$
\end{definition}

We introduce the Hilbert spaces $H^s(\Gamma)=\gamma\left(H^{s+\frac{1}{2}}(\Omega)\right),$ and 
$\TT\HH^{s}(\Gamma)
=\gamma_{D}\left(\HH^{s+\frac{1}{2}}(\Omega)\right)$
For $s>0$, 
 the traces 
 $$\gamma:H^{s+\frac{1}{2}}(\Omega)\rightarrow H^{s}(\Gamma),$$ 
 $$\gamma_{D}:\HH^{s+\frac{1}{2}}(\Omega)\rightarrow \TT\HH^{s}(\Gamma)
 $$ 
 are then continuous. 
The dual of $H^s(\Gamma)$ and $\TT\HH^{s}(\Gamma)$ 
   with respect to the $L^2$ (or $\LL^2$) scalar product is denoted by $H^{-s}(\Gamma)$ and $\TT\HH^{-s}(\Gamma)$, respectively. 
 
The surface differential operators defined here above can be extended to the Sobolev spaces: The tangential gradient and the tangential vector curl are linear and continuous from $H^{s+1}(\Gamma)$ to $\TT\HH^s(\Gamma)$,  the surface divergence and  the  surface scalar curl are linear and continuous from $\TT\HH^{s+1}(\Gamma)$ to $H^s(\Gamma)$.

\begin{definition} We define the Hilbert space
$$
  \TT\HH^{-\frac{1}{2}}(\Div_{\Gamma},\Gamma)=\left\{ \jj\in
\TT\HH^{-\frac{1}{2}}(\Gamma),\Div_{\Gamma}\jj \in
H^{-\frac{1}{2}}(\Gamma)\right\}
$$
endowed with the norm 
$$
||\cdot||_{\TT\HH^{-\frac{1}{2}}(\Div_{\Gamma},\Gamma)}=||\cdot||_{\TT\HH^{-\frac{1}{2}}(\Gamma)}+||\Div_{\Gamma}\cdot||_{H^{-\frac{1}{2}}(\Gamma)}.
$$ 
\end{definition}

\begin{lemma}
\label{2.3} The operators $\gamma_{D}$ and $\gamma_{N}$ are linear and
continuous from $\mathscr{C}^{\infty}(\overline{\Omega},\R^3)$ to $\TT\LL^2(\Gamma)$ and they
can be extended to continuous linear operators from $\HH(\Rot,\Omega)$ and 
$\HH(\Rot,\Omega)\cap\HH(\Rot\Rot,\Omega)$, respectively, to
$\TT\HH^{-\frac{1}{2}}(\Div_{\Gamma},\Gamma)$. 
\end{lemma}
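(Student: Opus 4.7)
My plan is to prove the smooth case by direct inspection and then extend by density using Green's formula and the characterization of $\TT\HH^{-1/2}(\Div_\Gamma,\Gamma)$ as the natural trace space for $\HH(\Rot,\Omega)$.

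\textbf{Step 1 (smooth case).} For $\vv \in \mathscr{C}^\infty(\overline\Omega,\R^3)$, the vector field $\nn\times \vv_{|\Gamma}$ is tangential and smooth on $\Gamma$, so it lies in $\TT\LL^2(\Gamma)$. The same applies to $\gamma_{N_\kappa}\vv = \kappa^{-1}\nn\times \Rot\vv_{|\Gamma}$. Continuity of both maps into $\TT\LL^2(\Gamma)$ is immediate.

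\textbf{Step 2 (extension of $\gamma_D$).} The plan is to define $\gamma_D\uu$ for $\uu \in \HH(\Rot,\Omega)$ via duality, using the Green formula
$$
   \int_\Omega \bigl(\Rot\uu\cdot\vv - \uu\cdot\Rot\vv\bigr)\,dx
   = \langle \nn\times\uu,\vv\rangle_\Gamma,
$$
valid for smooth $\uu,\vv$. For $\uu\in\HH(\Rot,\Omega)$ the left-hand side is a continuous linear functional of $\vv\in\HH^1(\Omega)$, hence of its Dirichlet trace in $\HH^{1/2}(\Gamma)$ (by surjectivity of the trace and a right inverse). Since only the tangential part of $\vv_{|\Gamma}$ contributes, this defines $\gamma_D\uu \in \TT\HH^{-1/2}(\Gamma)$ and the continuity
$$
   \|\gamma_D\uu\|_{\TT\HH^{-1/2}(\Gamma)} \lesssim \|\uu\|_{\HH(\Rot,\Omega)}
$$
follows directly from the estimate of the bilinear form.

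\textbf{Step 3 (surface divergence).} For smooth $\uu$ one has the identity $\Div_\Gamma(\nn\times\uu) = -\nn\cdot\Rot\uu$ on $\Gamma$. Since $\Rot\uu\in\LL^2(\Omega)$ and $\Div(\Rot\uu)=0$ in $\Omega$, the normal trace $\nn\cdot\Rot\uu$ is well defined in $H^{-1/2}(\Gamma)$ by the standard trace theorem for $\HH(\Div,\Omega)$. By density of smooth vector fields in $\HH(\Rot,\Omega)$, the identity $\Div_\Gamma\gamma_D\uu = -\nn\cdot\Rot\uu$ persists in the distributional sense on $\Gamma$, so $\Div_\Gamma\gamma_D\uu\in H^{-1/2}(\Gamma)$ with continuous dependence on $\uu$. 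Combined with Step~2 this gives $\gamma_D\in\mathscr{L}\bigl(\HH(\Rot,\Omega),\TT\HH^{-1/2}(\Div_\Gamma,\Gamma)\bigr)$.

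\textbf{Step 4 (extension of $\gamma_N$).} Writing $\gamma_{N_\kappa}\uu = \kappa^{-1}\gamma_D(\Rot\uu)$, one observes that $\uu\in\HH(\Rot,\Omega)\cap\HH(\Rot\Rot,\Omega)$ implies $\Rot\uu\in\HH(\Rot,\Omega)$. Applying Steps~2--3 to $\Rot\uu$ yields $\gamma_{N_\kappa}\uu\in\TT\HH^{-1/2}(\Div_\Gamma,\Gamma)$ with $\|\gamma_{N_\kappa}\uu\|\lesssim|\kappa|^{-1}\|\Rot\uu\|_{\HH(\Rot,\Omega)} \lesssim \|\uu\|_{\HH(\Rot,\Omega)\cap\HH(\Rot\Rot,\Omega)}$.

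The main technical obstacle will be Step~2, namely justifying that the right-hand side of Green's formula depends only on the tangential component of the trace of $\vv$ and identifying it with an element of the dual of $\TT\HH^{1/2}(\Gamma)$; this requires a careful use of the surjectivity of the Dirichlet trace $\HH^1(\Omega)\to\HH^{1/2}(\Gamma)$ together with a continuous right inverse. Once this is cleanly set up, Step~3 is a routine consequence of the classical normal-trace theorem for $\HH(\Div,\Omega)$ combined with the identity $\Div_\Gamma(\nn\times\cdot) = -\nn\cdot\Rot(\cdot)$, and Step~4 is then automatic.
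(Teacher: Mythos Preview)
The paper does not give a proof of this lemma; it is stated as a standard result in the trace theory for $\HH(\Rot,\Omega)$, with implicit reference to the works cited at the head of the section (Buffa--Ciarlet, Buffa--Costabel--Schwab, Buffa--Costabel--Sheen, N\'ed\'elec). Your argument is precisely the classical one found in those references: define $\gamma_D$ by duality via the Green formula for the curl, then read off the surface divergence from the identity $\Div_\Gamma(\nn\times\uu)=-\nn\cdot\Rot\uu$ combined with the normal trace in $\HH(\Div,\Omega)$, and finally obtain $\gamma_{N_\kappa}$ by applying $\gamma_D$ to $\Rot\uu$. The proof is correct and nothing needs to be added; your caveat about Step~2 (that the boundary pairing depends only on the tangential part of the test trace and that one needs a bounded right inverse of the trace $\HH^1(\Omega)\to\HH^{1/2}(\Gamma)$) is exactly the point one has to check, and it goes through on a smooth boundary without difficulty.
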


For  $\uu\in \HH_{\loc}(\Rot,\overline{\Omega^c})$ and $\vv \in
\HH_{\loc}(\Rot\Rot,\overline{\Omega^c}))$
we define $\gamma_{D}^c\uu $ and $\gamma_{N}^c\vv $ in the same way and the same mapping properties hold true.

Recall that we assume that the boundary $\Gamma$ is smooth and topologically trivial. For a proof of the following result, we refer to \cite{BuffaCiarlet,MartC,Nedelec}.
\begin{lemma}
\label{LapBel}
 Let $t\in\R$. The Laplace-Beltrami operator \begin{equation}\label{eqd1}\Delta_{\Gamma}=\Div_{\Gamma}\nabla_{\Gamma}=-\rot_{\Gamma}\Rot_{\Gamma}\end{equation} is linear and continuous from $H^{t+2}(\Gamma)$ to $H^t(\Gamma)$. \\
It is an isomorphism from $H^{t+2}(\Gamma)\slash\R$ to the space
$H^t_{*}(\Gamma)$ defined by
$$
u\in H^t_{*}(\Gamma)\quad\Longleftrightarrow\quad u\in H^t(\Gamma)\textrm{ and }\int_{\Gamma}u=0.
$$
\end{lemma}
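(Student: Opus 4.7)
The plan has two parts: continuity as a map $H^{t+2}(\Gamma)\to H^t(\Gamma)$, and the isomorphism property modulo constants.

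First, I would dispatch the continuity and the identity $\Div_{\Gamma}\nabla_{\Gamma}=-\rot_{\Gamma}\Rot_{\Gamma}$. The identity is a pointwise computation from the definitions \eqref{G}--\eqref{D} of the surface operators: one checks that $\Rot_{\Gamma}u=\nabla_{\Gamma}u\times\nn$ is the image of $\nabla_{\Gamma}u$ under the $\pi/2$-rotation in the tangent plane, and then direct expansion gives the formula. Continuity is then immediate: the remarks just after Definition~2.3 state that $\nabla_{\Gamma}: H^{t+2}(\Gamma)\to\TT\HH^{t+1}(\Gamma)$ and $\Div_{\Gamma}:\TT\HH^{t+1}(\Gamma)\to H^t(\Gamma)$ are continuous, and $\Delta_{\Gamma}$ is their composition.

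Next, for the isomorphism property, the natural route is via elliptic theory on the smooth closed manifold $\Gamma$. I would first check that $\Delta_\Gamma$ has the right domain and target modulo constants: constants are obviously in the kernel, and integrating by parts on the closed surface (Stokes' theorem, no boundary term) yields $\int_\Gamma \Delta_\Gamma u\,d\sigma = \int_\Gamma \Div_\Gamma \nabla_\Gamma u\,d\sigma=0$ for all $u\in\mathscr{C}^\infty(\Gamma)$, so by density the range sits inside $H^t_*(\Gamma)$. In local coordinates $\Delta_\Gamma$ has principal symbol $-|\xi|_g^2$, hence is a second-order elliptic, formally self-adjoint operator on the compact boundaryless manifold $\Gamma$.

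I would then invoke the standard elliptic Fredholm theory on closed manifolds: $\Delta_\Gamma: H^{t+2}(\Gamma)\to H^t(\Gamma)$ is Fredholm of index zero for every $t\in\R$. The kernel is computed at the $H^1$ level by Green's identity, $\langle -\Delta_\Gamma u,u\rangle_\Gamma = \|\nabla_\Gamma u\|^2_{\TT\LL^2(\Gamma)}$, which forces $u$ constant since $\Gamma$ is connected; by elliptic regularity the kernel is the same in every $H^{t+2}$. Self-adjointness then identifies the cokernel with the same one-dimensional space of constants, so the range is exactly the annihilator $H^t_*(\Gamma)$. Passing to the quotient $H^{t+2}(\Gamma)/\R$ gives a continuous bijection onto $H^t_*(\Gamma)$, and the open mapping theorem upgrades it to an isomorphism.

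The only genuinely delicate point is the Fredholm/regularity step for arbitrary real $t$. For $t\ge 0$ a Gårding inequality and the Lax--Milgram argument on $H^1(\Gamma)/\R$ suffice together with elliptic bootstrapping; the extension to $t<0$ follows by duality, using the self-adjointness of $\Delta_\Gamma$ and the duality $\bigl(H^t_*(\Gamma)\bigr)' = H^{-t}(\Gamma)/\R$. All of this is classical on compact manifolds without boundary (and particularly transparent here since $\Gamma\cong S^2$, allowing a direct spherical-harmonic diagonalization after pull-back), so I would simply appeal to these standard facts rather than rederive them.
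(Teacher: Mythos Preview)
Your argument is correct and is the standard route via elliptic Fredholm theory on a smooth closed manifold. Note, however, that the paper does not actually prove this lemma: it simply refers to the literature (Buffa--Ciarlet, Cessenat, N\'ed\'elec) for a proof, and adds only the one-line remark that the isomorphism statement follows from the surjectivity of $\Div_{\Gamma}$ and $\rot_{\Gamma}$ from $\TT\HH^{t+1}(\Gamma)$ onto $H^t_{*}(\Gamma)$. That remark packages the surjectivity half of your argument slightly differently---once $\Div_{\Gamma}$ is known to be onto $H^t_*(\Gamma)$ and $\nabla_{\Gamma}$ onto the curl-free part of $\TT\HH^{t+1}(\Gamma)$, the composition $\Delta_\Gamma=\Div_\Gamma\nabla_\Gamma$ hits all of $H^t_*(\Gamma)$---but the underlying analytic input (ellipticity, kernel = constants, range = mean-zero functions) is the same as what you wrote out.
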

This result is due to the surjectivity of the operators $\Div_{\Gamma}$ and $\rot_{\Gamma}$ from $\TT\HH^{t+1}(\Gamma)$ to $H^t_{*}(\Gamma)$. 
 
We note the following equalities:
\begin{equation}
\label{eqd2}
  \rot_{\Gamma}\nabla_{\Gamma}=0\text{ and }\Div_{\Gamma}\Rot_{\Gamma}=0
\end{equation}
\begin{equation}
\label{eqd3}
 \Div_{\Gamma}(\nn\times \jj)=-\rot_{\Gamma}\jj\text{ and }\rot_{\Gamma}(\nn\times \jj)=\Div_{\Gamma}\jj\end{equation}

\subsection{Pseudo-homogeneous kernels}
 In this paper we are concerned with boundary integral operators of the form :
\begin{equation}\label{p}\mathcal{K}_{\Gamma}u(x)=\mathrm{vp.}\int_{\Gamma}k(y,x-y)u(y)d\sigma(y),\;x\in\Gamma\end{equation} 
where the integral is assumed to exist in the sense of  a Cauchy principal value and the kernel  $k$ is weakly singular, regular with respect to the variable  $y\in\Gamma$ and quasi-homogeneous with respect to the variable  $z=x-y\in\R^3$. We recall the regularity properties of these operators on the Sobolev  spaces  $H^s(\Gamma)$, $s\in\R$ available also for their adjoints operators:
\begin{equation}\mathcal{K}_{\Gamma}^*(u)(x)=\mathrm{vp.}\int_{\Gamma}k(x,y-x)u(y)d\sigma(y),\;x\in\Gamma.\end{equation}
We use the class of weakly singular kernel introduced by Nedelec  (\cite{Nedelec} p. 176). 
 More details can be found in \cite{Eskin, Hormander, Journe, Meyer, Taylor1, Taylor2}.
\begin{definition} The homogeneous kernel $k(y,z)$ defined on $\Gamma\times\left(\R^3\backslash\{0\}\right)$ is said of class $-m$  with $m\geq0$ if
$$\left\{\begin{array}{c}\sup\limits_{y\in\R^d}\sup\limits_{|z|=1}\left|\dfrac{\partial^{|\alpha|}}{\partial y^{\alpha}}\dfrac{\partial^{|\beta|}}{\partial z^{\beta}}k(y,z)\right|\leq C_{\alpha,\beta}, \;\text{for all multi-index }\alpha\text{ and }\beta,\\\\
\dfrac{\partial^{|\beta|}}{\partial z^{\beta}}k(y,z)\text{ is homogeneous of degree }-2\text{ with respect to the  variable }z\\\\\text{ for all }|\beta|=m 
\text{ and }D_{z}^mk(y,z)\text{ is odd with respect to the  variable }z.\end{array}\right.$$
\end{definition}
\begin{definition}  The kernel $k\in\mathscr{C}^{\infty}\left(\Gamma\times\left(\R^3\backslash\{0\}\right)\right)$ is pseudo-homogeneous of class $-m$ for an integer  $m$ such that  $m\geqslant0$, if for all integer $s$ the kernel $k$ admit the following asymptotic expansion when $z$ tends to $0$:
\begin{equation}\label{(dev)}k(y,z)=k_{m}(y,z)+\sum_{j=1}^{N-1}k_{m+j}(y,z)+k_{m+N}(y,z),\end{equation}
where for $j=0,1,...,N-1$ the function $k_{m+j}$ is homogeneous of class  $-(m+j)$  and $N$ is chosen such that $k_{m+N}$ is $s$ times differentiables.\end{definition}

 For the proof of the following theorem, we refer to \cite{Nedelec}. 
\begin{theorem}Let $k$ be a  pseudo-homogeneous kernel of class $-m$. The  associated operator $\mathcal{K}_{\Gamma}$ given by \eqref{p} is linear and continuous from $H^{s}(\Gamma)$ to $H^{s+m}(\Gamma)$ for all $s\in\R$.\end{theorem}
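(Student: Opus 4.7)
The plan is to reduce the statement to the known mapping properties of homogeneous Calderón–Zygmund and weakly singular operators on $\R^2$ via a partition of unity on $\Gamma$, exploiting the asymptotic expansion \eqref{(dev)} to peel off the singular part from a smooth remainder.

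First I would fix $s \in \R$, and given $s$, choose $N$ large enough that $k_{m+N}$ is smooth enough (say $C^{|s|+3}$) for the remainder operator to be handled by elementary means. The expansion \eqref{(dev)} then writes $\mathcal{K}_\Gamma = \sum_{j=0}^{N-1} \mathcal{K}_\Gamma^{(j)} + \mathcal{R}_\Gamma$, where $\mathcal{K}_\Gamma^{(j)}$ has kernel $k_{m+j}$ homogeneous of class $-(m+j)$, and $\mathcal{R}_\Gamma$ has a kernel of class $-(m+N)$ with enough classical derivatives. The remainder $\mathcal{R}_\Gamma$ is then straightforwardly continuous from $H^s(\Gamma)$ to $H^{s+m}(\Gamma)$ by repeated integration by parts against the surface differential operators defined in Section~\ref{BoundIntOp}, at the cost of absorbing derivatives onto the smooth kernel.

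The substantive step is the analysis of each $\mathcal{K}_\Gamma^{(j)}$. I would introduce a finite atlas $\{(U_\ell,\varphi_\ell)\}$ of $\Gamma$ together with a subordinate partition of unity $\{\chi_\ell\}$, and split $\mathcal{K}_\Gamma^{(j)} u = \sum_{\ell,\ell'} \chi_\ell\,\mathcal{K}_\Gamma^{(j)}(\chi_{\ell'} u)$. The off–diagonal terms (where $\operatorname{supp}\chi_\ell \cap \operatorname{supp}\chi_{\ell'} = \emptyset$) have smooth kernels and are regularising. For the diagonal terms, I pull back to a disk $V \subset \R^2$ via $\varphi_\ell$: the surface measure contributes a smooth Jacobian factor, and the Euclidean chord $x-y$ can be replaced by $D\varphi_\ell^{-1}(\varphi_\ell(x))(\varphi_\ell(x)-\varphi_\ell(y))$ up to a smoother perturbation (controlled again by an expansion). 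After this pullback the operator is an integral operator on $V$ whose kernel is homogeneous of degree $-(m+j)+\!\text{lower order}$ in the $\R^2$-variable $\xi = \varphi_\ell(x)-\varphi_\ell(y)$ — i.e.\ a classical pseudodifferential operator of order $-(m+j)$ on $\R^2$, whose $H^s_{\loc} \to H^{s+m+j}_{\loc}$ continuity is standard and passes back to $\Gamma$ through the charts.

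The main obstacle is the top order term $j=0$ when $m=0$, where $k_m$ is merely a Cauchy principal value kernel: local integrability fails and $L^2$-boundedness is not automatic. This is precisely where the two structural hypotheses on the class $-m$ kernels intervene. Homogeneity of degree $-2$ in $z$ of $D_z^m k$ matches the surface dimension so that the principal-value integral is scale-invariant, and the \emph{oddness} of $D_z^m k$ in $z$ supplies the cancellation on small spheres that makes the principal value converge and the singular integral bounded on $L^2$ — a Calderón–Zygmund / Tricomi–Giraud type argument, used locally in each chart. Once the $L^2$-boundedness of the top order term is secured, the higher order terms $j\geq 1$ are weakly singular and easier, since $|k_{m+j}(y,z)| \lesssim |z|^{-(m+j)}$ with $m+j < 2$ (after reducing $-m$ modulo integer shifts the general case is handled by composition with $(1-\Delta_\Gamma)^{\pm 1/2}$, using Lemma~\ref{LapBel}), and interpolation together with duality extends the bound to all $s \in \R$.
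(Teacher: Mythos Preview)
The paper does not give its own proof of this theorem: it is stated as a known result, with the proof referred to N\'ed\'elec's book \cite{Nedelec} (see the sentence immediately preceding the theorem). Your sketch is essentially the standard argument carried out there: decompose the pseudo-homogeneous kernel via \eqref{(dev)} into a finite sum of homogeneous terms plus a smooth remainder, localise via a partition of unity and charts, identify the pulled-back operators as classical pseudodifferential operators on $\R^2$ of the appropriate orders, and invoke their Sobolev mapping properties. So there is nothing to compare your approach against in the present paper --- you have supplied precisely the argument the paper outsources.

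Two small remarks on your write-up. First, the sentence about ``reducing $-m$ modulo integer shifts'' and composing with $(1-\Delta_\Gamma)^{\pm 1/2}$ is unnecessary and slightly muddled: once $m\ge1$ the kernel $k_m$ is homogeneous of degree $m-2\ge-1$ in $z$, hence locally integrable on the $2$-dimensional surface, and the weakly singular theory applies directly without any order-shifting trick. The only genuine principal-value case is $m=0$, which you correctly isolate. Second, the step ``replace $x-y$ by $D\varphi_\ell^{-1}(\varphi_\ell(x))(\varphi_\ell(x)-\varphi_\ell(y))$ up to a smoother perturbation'' is where the regularity of $\Gamma$ is actually consumed; in a fully written proof one needs a Taylor expansion of the chart to control this remainder uniformly, which is routine for smooth $\Gamma$ but worth making explicit.
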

We have similar results for the adjoint operators $\mathcal{K}_{\Gamma}^{*}$.\newline

 The following theorem is established in \cite{Eskin}.
\begin{theorem}Let  $k$ be a pseudo-homogeneous kernel of class $-m$. The potential operator  $\mathcal{P}$ defined by 
\begin{equation}\label{p'}\mathcal{P}(u)(x)=\int_{\Gamma}k(y,x-y)u(y)d\sigma(y),\;x\in\R^3\backslash\Gamma\end{equation}is continuous from $H^{s-\frac{1}{2}}(\Gamma)$ to $H^{s+m}(\Omega)\cup H^{s+m}_{loc}(\Omega^c)$ for all positive real number $s$.\end{theorem}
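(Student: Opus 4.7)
The plan is to combine three tools: a partition of unity to reduce the question to a local model problem, the asymptotic expansion \eqref{(dev)} of the kernel to isolate its principal homogeneous part, and a tangential Fourier analysis of that model potential.

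First I would take a smooth partition of unity $\{\chi_\ell\}$ subordinate to a finite atlas of coordinate charts on $\Gamma$ and write $\mathcal{P}(u) = \sum_\ell \mathcal{P}(\chi_\ell u)$. Since $k(y, x-y)$ is $C^\infty$ in $x$ whenever $x$ stays at positive distance from $\mathrm{supp}\,\chi_\ell$, the function $\mathcal{P}(\chi_\ell u)$ is smooth away from that support, so only a neighbourhood of each patch matters. Using a diffeomorphism to straighten $\Gamma$ locally to $\R^2 \times \{0\}$ (Sobolev scales transport cleanly because $\chi_\ell u$ is compactly supported), this reduces the problem to proving $H^{s-1/2}(\R^2) \to H^{s+m}(\R^3_\pm)$ continuity of a model potential $v \mapsto \int_{\R^2} \tilde{k}(y', (x'-y', x_3))\, v(y')\,dy'$ with $\tilde{k}$ pseudo-homogeneous of class $-m$ and compactly supported in $y'$.

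Second, I would apply the expansion \eqref{(dev)} to $\tilde{k}$ and pick $N$ so large that $k_{m+N}$ is of class $\mathscr{C}^{s+m+3}$: the associated potential is then smoothing of order $\geq s+m$, and each $k_{m+j}$ with $j\geq 1$ is treated by the same argument as $k_m$ but with strictly better gain. The essential case is therefore the principal homogeneous kernel $k_m$ of class $-m$. For this one I would take the partial Fourier transform in the tangential variable $x'$, treating $y'$ as a parameter. The homogeneity of degree $-1$ in $z$ (inherited from the degree $-2$ homogeneity and oddness of $D_z^m k_m$) yields a symbol of the form $\widehat{K}(y', \xi', x_3) = |\xi'|^{-m}\, a(y', \xi'/|\xi'|,\, x_3 |\xi'|)$, where $a$ is smooth and rapidly decreasing in its last argument. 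Using the Plancherel characterisation of $H^{s+m}(\R^3_+)$ and the change of variable $\tau = x_3|\xi'|$, the target bound $\|\mathcal{P}(u)\|_{H^{s+m}} \lesssim \|u\|_{H^{s-1/2}(\Gamma)}$ reduces to the pointwise estimate $(1+|\xi'|^2)^{(s+m)/2}\,|\xi'|^{-m}\,|\xi'|^{-1/2} \lesssim (1+|\xi'|^2)^{(s-1/2)/2}$, which is precisely the one-half derivative gain expected from a boundary-to-bulk correspondence.

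The main obstacle is the symbolic analysis of $\widehat{K}$: rigorously establishing the factorisation above, proving $L^2$-integrability and sufficient smoothness of $a(y', \omega, \tau)$ in $\tau$ uniformly in the other variables, and in particular taming the principal-value singularity at $z=0$ through the oddness hypothesis built into the definition of a homogeneous kernel of class $-m$. Once these symbol estimates are in hand, patching via the partition of unity delivers the bound on $H^{s+m}(\Omega)$; the same local analysis applied near $\Gamma$ on the exterior side, combined with the smoothness of $\mathcal{P}(u)$ far from $\Gamma$ on any bounded set, yields the $H^{s+m}_{\loc}(\overline{\Omega^c})$ conclusion.
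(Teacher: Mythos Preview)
The paper does not give its own proof of this theorem: it simply states ``The following theorem is established in \cite{Eskin}'' and moves on. So there is no proof in the paper to compare against.

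Your outline is essentially the standard argument one finds in Eskin or in the pseudodifferential literature: localise via a partition of unity and boundary-flattening charts, peel off the smooth remainder $k_{m+N}$ using the expansion \eqref{(dev)}, and analyse the principal homogeneous part by a partial Fourier transform in the tangential variables to obtain a symbol of the form $|\xi'|^{-m}\,a(y',\xi'/|\xi'|,x_3|\xi'|)$ with rapid decay in the last argument. The Plancherel computation with the change of variable $\tau=x_3|\xi'|$ then produces exactly the half-derivative gain needed for $H^{s-1/2}(\Gamma)\to H^{s+m}$. This is the right route and matches the spirit of the reference the paper cites.

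One small slip: you write that $k_m$ has ``homogeneity of degree $-1$ in $z$'', but by the definition of class $-m$ it is $D_z^m k_m$ that is homogeneous of degree $-2$, so $k_m$ itself is homogeneous of degree $m-2$. Fortunately your scaling identity $\widehat K(y',\xi',x_3)=|\xi'|^{-m}a(y',\xi'/|\xi'|,x_3|\xi'|)$ is the correct one for that degree, so the error is only in the parenthetical remark, not in the computation. The genuine technical work, as you correctly flag, lies in proving uniform decay of $a(y',\omega,\tau)$ in $\tau$ (this is where the smoothness in $z$ away from the origin and the structure of the homogeneous kernel enter) and in handling the case $m=0$, where the surface integral is only a principal value; both points are treated in Eskin.
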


\subsection{The electromagnetic boundary integral operators}
We use some well known results about electromagnetic potentials. Details can be found 
in \cite{BuffaCiarlet, BuffaCostabelSchwab, BuffaCostabelSheen,BuffaHiptmairPetersdorffSchwab, Nedelec}.

Let $\kappa$ be a complex number such that $\Im(\kappa)\ge0$ and let 
$$
  G(\kappa,|x-y|)=\dfrac{e^{i\kappa|x-y|}}{4\pi| x-y|}
$$
be the fundamental solution of the Helmholtz equation 
$$
  {\Delta u + \kappa^2u =0}.
$$
The single layer potential  $\psi_{\kappa}$ is given by   :
\begin{center}$(\psi_{\kappa}u)(x) = \displaystyle{\int_{\Gamma}G(\kappa,|x-y|)u(y) d\sigma(y)}\qquad x
\in\R^3\backslash\Gamma$,\end{center}
and its trace by 
$$
 V_{\kappa}u(x)= \int_{\Gamma}G(\kappa,|x-y|)u(y) d\sigma(y)\qquad x
\in\Gamma.
$$
The fundamental solution is pseudo-homogeneous of class $-1$ (see \cite{HsiaoWendland,Nedelec}). As consequence we have the following result :
\begin{lemma}
\label{3.1}  Let $s\in\R$.
The operators
$$ 
\begin{array}{ll} \psi_{\kappa} & : H^{s-\frac{1}{2}}(\Gamma)\rightarrow H^{s+1}_{\loc}(\R^3) \vspace{2mm} \\ 
V_{\kappa}& : H^{s-\frac{1}{2}}(\Gamma)\rightarrow H^{s+\frac{1}{2}}(\Gamma) \end{array}
$$  
are continuous.
\end{lemma}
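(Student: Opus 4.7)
The plan is to apply the two preceding theorems on operators with pseudo-homogeneous kernels in the special case $m=1$ to the Helmholtz fundamental solution $G(\kappa,|x-y|)=e^{i\kappa|x-y|}/(4\pi|x-y|)$. The key preliminary step is to verify explicitly that this kernel is pseudo-homogeneous of class $-1$ in the sense of the definition above. Expanding the exponential as a power series,
\[
  G(\kappa,|z|)=\frac{1}{4\pi|z|}+\frac{i\kappa}{4\pi}+\sum_{j\ge 2}\frac{(i\kappa)^{j}}{4\pi\, j!}|z|^{j-1},
\]
the leading term $1/(4\pi|z|)$ is positively homogeneous of degree $-1$ (hence of class $-1$), the next term is a constant (class $0$), and so on, so truncating at order $N$ leaves a remainder as smooth as one wishes. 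This matches expansion \eqref{(dev)} with $m=1$, and the $y$-independence makes the multi-index estimates trivial.

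With this in hand I would immediately obtain the mapping property of $V_\kappa$ by applying the first of the two theorems (continuity of $\mathcal{K}_\Gamma$ from $H^{s}(\Gamma)$ to $H^{s+m}(\Gamma)$) with $m=1$ and the substitution $s\mapsto s-\tfrac12$. This gives the stated continuity $V_\kappa:H^{s-\frac12}(\Gamma)\to H^{s+\frac12}(\Gamma)$ for all $s\in\R$.

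For $\psi_\kappa$ I would apply the second theorem (continuity of $\mathcal P$) with $m=1$, which yields, for every $s>0$, continuity from $H^{s-\frac12}(\Gamma)$ into $H^{s+1}(\Omega)$ and $H^{s+1}_{\loc}(\Omega^c)$ separately. The subtle point is that the target in Lemma \ref{3.1} is the \emph{global} space $H^{s+1}_{\loc}(\R^3)$ rather than its piecewise interior/exterior counterpart; this upgrade will be the main obstacle. To deal with it I would invoke the classical jump relation for single layer potentials, namely $\gamma\psi_\kappa u=\gamma^{c}\psi_\kappa u=V_\kappa u$, which expresses the fact that $\psi_\kappa u$ has no jump across $\Gamma$. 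Combined with the piecewise $H^{s+1}$ regularity and the matching of traces at the $H^{s+1/2}(\Gamma)$ level (provided by the just-proved mapping of $V_\kappa$), a standard gluing argument then yields $\psi_\kappa u\in H^{s+1}_{\loc}(\R^3)$.

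Finally, the range $s\le 0$, which includes the natural energy case $s=0$ where $\psi_\kappa:H^{-\frac12}(\Gamma)\to H^{1}_{\loc}(\R^3)$, is not directly covered by the hypothesis $s>0$ in the second theorem. I would extend to this range by duality: the adjoint of $\psi_\kappa$, realised through the pairing with a compactly supported test function and the $L^2(\Gamma)$ trace duality, is an operator of the same pseudo-homogeneous type, so its continuity on positive-order spaces transposes to continuity of $\psi_\kappa$ on the dual, negative-order spaces, with the same gain of one derivative. This closes the argument for all $s\in\R$.
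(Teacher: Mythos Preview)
Your approach is essentially the one the paper takes: the paper does not give a detailed proof of this lemma but simply observes that the fundamental solution is pseudo-homogeneous of class $-1$ (citing \cite{HsiaoWendland,Nedelec}) and states the lemma as an immediate consequence of the two preceding theorems on operators with pseudo-homogeneous kernels, applied with $m=1$. Your expansion of $e^{i\kappa|z|}/(4\pi|z|)$ and your invocation of those theorems is exactly that argument made explicit.

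Where you go beyond the paper is in flagging two points the paper passes over in silence: the mismatch between the piecewise target $H^{s+1}(\Omega)\cup H^{s+1}_{\loc}(\Omega^c)$ in the potential theorem and the global target $H^{s+1}_{\loc}(\R^3)$ in the lemma, and the restriction $s>0$ in the potential theorem versus $s\in\R$ in the lemma. Your proposed remedies (the absence of a jump for the single layer plus trace matching for the first, duality for the second) are the standard ones and are correct. The paper simply absorbs these details into the references.
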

We define the electric potential $\Psi_{E_{\kappa}}$ generated by $\jj\in
\TT\HH^{-\frac{1}{2}}(\Div_{\Gamma},\Gamma) $ by 
$$\Psi_{E_{\kappa}}\jj :=
\kappa\psi_{\kappa}\jj + \kappa^{-1}\nabla\psi_{\kappa}\Div_{\Gamma}\jj
$$
This can be written as $\Psi_{E_{\kappa}}\jj :=
\kappa^{-1}\Rot\Rot\psi_{\kappa}\jj$ because of the Helmholtz equation and
the identity $\Rot\Rot = -\Delta +\nabla\Div$ (cf \cite{BuffaCiarlet}).

We define the magnetic potential  
$\Psi_{M_{\kappa}}$ generated by $\mm\in \TT\HH^{-\frac{1}{2}}(\Div_{\Gamma},\Gamma) $
by 
$$
 \Psi_{M_{\kappa}}\mm := \Rot\psi_{\kappa}\mm.
$$
 We denote the identity operator by $\Id$.
\begin{lemma}
\label{3.2}
The potentials $\Psi_{E_{\kappa}}$ et $\Psi_{M_{\kappa}}$ are
continuous from $\TT\HH^{-\frac{1}{2}}(\Div_{\Gamma},\Gamma)$ to
$\HH_{\loc}(\Rot,\R^3)$. 
For $\jj\in \TT\HH^{-\frac{1}{2}}(\Div_{\Gamma},\Gamma)$ we have 
$$(\Rot\Rot -\kappa^2\Id)\Psi_{E_{\kappa}}\jj = 0\textrm{ and }(\Rot\Rot
-\kappa^2\Id)\Psi_{M_{\kappa}}\mm = 0\textrm{ in }\R^3\backslash\Gamma$$ and  $\Psi_{E_{\kappa}}\jj$
and $\Psi_{M_{\kappa}}\mm$ satisfy the Silver-M\"uller condition.
\end{lemma}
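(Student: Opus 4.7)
The plan has three parts: continuity with the right $\Rot$-regularity, the homogeneous second-order Maxwell equation in $\R^{3}\setminus\Gamma$, and the Silver-M\"uller condition at infinity. The common ingredients are Lemma~\ref{3.1} applied componentwise to the scalar single layer $\psi_{\kappa}$, the vector identity $\Rot\Rot=-\Delta+\nabla\Div$, and the Helmholtz equation $(\Delta+\kappa^{2})G=0$ valid for $x\neq y$.

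For continuity, I would observe that $\jj\in\TT\HH^{-\frac{1}{2}}(\Div_{\Gamma},\Gamma)$ gives $\jj\in\HH^{-\frac{1}{2}}(\Gamma)$ and $\Div_{\Gamma}\jj\in H^{-\frac{1}{2}}(\Gamma)$, so Lemma~\ref{3.1} with $s=0$ yields $\psi_{\kappa}\jj\in\HH^{1}_{\loc}(\R^{3})$ and $\psi_{\kappa}\Div_{\Gamma}\jj\in H^{1}_{\loc}(\R^{3})$. In the splitting $\Psi_{E_{\kappa}}\jj=\kappa\psi_{\kappa}\jj+\kappa^{-1}\nabla\psi_{\kappa}\Div_{\Gamma}\jj$, the first term is in $\HH^{1}_{\loc}\subset\HH_{\loc}(\Rot,\R^{3})$ and the second is the gradient of an $H^{1}_{\loc}$ function, hence in $\LL^{2}_{\loc}$ with vanishing distributional curl; continuity of $\Psi_{E_{\kappa}}$ follows immediately. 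For $\Psi_{M_{\kappa}}\mm=\Rot\psi_{\kappa}\mm\in\LL^{2}_{\loc}$, I would locate its curl on each side of $\Gamma$ via $\Rot\Rot\psi_{\kappa}\mm=\kappa^{2}\psi_{\kappa}\mm+\nabla\Div\psi_{\kappa}\mm$, the $-\Delta$ term being treated by Helmholtz. The auxiliary identity $\Div\psi_{\kappa}\mm=\psi_{\kappa}\Div_{\Gamma}\mm$, obtained by transferring $\nabla_{y}G$ onto $\mm$ through surface integration by parts and using $\mm\cdot\nn=0$, then shows $\nabla\Div\psi_{\kappa}\mm=\nabla\psi_{\kappa}\Div_{\Gamma}\mm\in\LL^{2}_{\loc}$, so that the curl lies in $\LL^{2}_{\loc}$ on each side of $\Gamma$.

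The Maxwell equation in $\R^{3}\setminus\Gamma$ is then a direct computation: writing $\Psi_{E_{\kappa}}\jj=\kappa^{-1}\Rot\Rot\psi_{\kappa}\jj$ and replacing $-\Delta\psi_{\kappa}\jj$ by $\kappa^{2}\psi_{\kappa}\jj$, one gets $(\Rot\Rot-\kappa^{2}\Id)\Psi_{E_{\kappa}}\jj=\kappa^{-1}\Rot\Rot(\nabla\Div\psi_{\kappa}\jj)=0$ because curl of gradient vanishes. After pulling the outer $\Rot$ outside, the same manipulation gives $(\Rot\Rot-\kappa^{2}\Id)\Psi_{M_{\kappa}}\mm=\Rot(\nabla\Div\psi_{\kappa}\mm)=0$.

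Finally, the Silver-M\"uller condition is inherited from the Sommerfeld behavior of $G(\kappa,|x-y|)$ and its $x$-derivatives, uniformly for $y$ on the compact surface $\Gamma$: using $G(\kappa,|x-y|)=\frac{e^{i\kappa|x|}}{4\pi|x|}e^{-i\kappa\hat{x}\cdot y}+O(|x|^{-2})$ together with $\nabla_{x}G(\kappa,|x-y|)-i\kappa\hat{x}\,G(\kappa,|x-y|)=O(|x|^{-2})$ and integrating these against $\jj$ or $\mm$ recovers the outgoing-wave asymptotics of both potentials. The only delicate point I expect is the distributional justification of $\Div\psi_{\kappa}\mm=\psi_{\kappa}\Div_{\Gamma}\mm$ at the low regularity $\mm\in\TT\HH^{-\frac{1}{2}}(\Div_{\Gamma},\Gamma)$, which I would handle by density of smooth tangential fields in the graph norm; everything else is mechanical chaining of Lemma~\ref{3.1} with the vector identities above.
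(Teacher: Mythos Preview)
The paper does not give a proof of this lemma; it is stated as a known result with references to \cite{BuffaCiarlet,BuffaCostabelSchwab,BuffaCostabelSheen,BuffaHiptmairPetersdorffSchwab,Nedelec}. Your argument is precisely the standard one found in those references: componentwise use of Lemma~\ref{3.1}, the identity $\Rot\Rot=-\Delta+\nabla\Div$, the relation $\Div\psi_\kappa\mm=\psi_\kappa\Div_\Gamma\mm$ for tangential $\mm$, and the Sommerfeld asymptotics of $G$. It is correct.

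One point worth making explicit: you rightly compute $\Rot\Psi_{M_\kappa}\mm$ only ``on each side of $\Gamma$'', and this caution is essential. In the sense of distributions on all of $\R^3$ one has
\[
\Rot\Rot\psi_\kappa\mm=\kappa^2\psi_\kappa\mm+\nabla\psi_\kappa\Div_\Gamma\mm+\mm\,\delta_\Gamma,
\]
the surface term reflecting the jump $[\gamma_D\Psi_{M_\kappa}\mm]=-\mm$. Hence $\Psi_{M_\kappa}\mm$ lies in $\HH(\Rot,\Omega)\cap\HH_{\loc}(\Rot,\overline{\Omega^c})$ but not, strictly, in $\HH_{\loc}(\Rot,\R^3)$. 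The paper's formulation is slightly loose here (its later use of the potentials, e.g.\ in \eqref{reformulation} and Theorem~\ref{thpsi}, is always on compact sets $K_p\subset\R^3\setminus\Gamma$), and your proof establishes exactly the correct statement. By contrast, for $\Psi_{E_\kappa}$ your argument works globally in $\R^3$, since $\Rot\Psi_{E_\kappa}\jj=\kappa\Rot\psi_\kappa\jj\in\LL^2_{\loc}(\R^3)$ with no boundary contribution.
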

We define the electric and the magnetic far field operators for $\jj\in \TT\HH^{-\frac{1}{2}}(\Div_{\Gamma},\Gamma)$ and an element $\hat{x}$ of the unit sphere $S^2$ of $\R^3$ by

\begin{equation}\label{FF}\begin{split}\Psi_{E_{\kappa}}^{\infty}\,\jj(\hat{x})=&\;\kappa\;\hat{x}\times\left(\int_{\Gamma}e^{-i\kappa\hat{x}\cdot y}\jj(y)d\sigma(y)\right)\times\hat{x},\\\Psi_{M_{\kappa_{e}}}^{\infty}\,\jj(\hat{x})=&\;i\kappa\;\hat{x}\times\left(\int_{\Gamma}e^{-i\kappa\hat{x}\cdot y}\jj(y)d\sigma(y)\right).\end{split}\end{equation}
 These operators are bounded from $\TT\HH^{-\frac{1}{2}}(\Div_{\Gamma},\Gamma)$ to $\TT(\mathscr{C}^{\infty}(S^2))^3$.

We can now define the main boundary integral operators:
\begin{center}$C_{\kappa} = -\frac{1}{2}\{\gamma_{D}+\gamma_{D}^c\}\Psi_{E_{\kappa}} =
-\frac{1}{2}\{\gamma_{N}+\gamma_{N}^c\}\Psi_{M_{\kappa}}$,\end{center}
\begin{center}$M_{\kappa} = -\frac{1}{2}\{\gamma_{D}+\gamma_{D}^c\}\Psi_{M_{\kappa}} =
-\frac{1}{2}\{\gamma_{N}+\gamma_{N}^c\}\Psi_{E_{\kappa}}$.
\end{center}
These are bounded operators in 
$\TT\HH^{-\frac{1}{2}}(\Div_{\Gamma},\Gamma)$.
We have
 $$\begin{array}{ll}\hspace{-2mm}\operatorname{C_{\kappa}}\jj(x)\hspace{-2mm}&=-\displaystyle{\kappa\hspace{-1mm}\int_{\Gamma}\nn(x)\times(G(\kappa,|x-y|)\jj(y))d\sigma(y)+\kappa^{-1}\hspace{-2mm}\int_{\Gamma}\Rot_{\Gamma}^x(G(\kappa,|x-y|)\Div_{\Gamma}\jj(y))d\sigma(y)}\vspace{1mm}\\&=\left(-\kappa\;\;\nn\times V_{\kappa}\,\jj+\kappa^{-1}\rot_{\Gamma}V_{\kappa}\Div_{\Gamma}\jj\right)(x)\end{array}$$ 
 and
  $$\begin{array}{ll}\operatorname{M_{\kappa}}\jj(x)&=-\displaystyle{\int_{\Gamma}\nn(x)\times\Rot^x(G(\kappa,|x-y|)\jj(y))d\sigma(y)}\vspace{1mm}\\&=\;\;(\operatorname{D_{\kappa}}\jj-\operatorname{B_{\kappa}}\jj)(x),\end{array}$$ 
  with
   $$\begin{array}{ll}\operatorname{B_{\kappa}}\jj(x)&=\displaystyle{\int_{\Gamma}\nabla^xG(\kappa,|x-y|)\left(\jj(y)\cdot\nn(x)\right)d\sigma(y),}\vspace{2mm}\\\operatorname{D_{\kappa}}\jj(x)&=\displaystyle{\int_{\Gamma}\left(\nabla^xG(\kappa,|x-y|)\cdot\nn(x)\right)\jj(y)d\sigma(y).}\end{array}$$ 
The kernel of $D_{\kappa}$ is pseudo-homogeneous of class $-1$ and the operator $M_{\kappa}$ has the same regularity as $D_{\kappa}$ on $\TT\HH^{-\frac{1}{2}}(\Div_{\Gamma},\Gamma)$, that  is compact.
 \bigskip

 We describe briefly the boundary integral equation method developped by the autors \cite{CostabelLeLouer} to solve the dielectric scattering problem.
 
 \medskip
 
 \textbf{Boundary integral equation method :}
 This is based on  the Stratton-Chu formula, the jump relations of the electromagnetic potentials and the Calder\'on projector's formula (see \cite{BuffaHiptmairPetersdorffSchwab, Nedelec}).
 
 We  need  a variant of the operator $\operatorname{C_{\kappa}}$ defined for $\jj\in \TT\HH^{-\frac{1}{2}}(\Div_{\Gamma},\Gamma)$ by :
$$\operatorname{C_{0}^*}\jj=\;\nn\times V_{0}\,\jj+\Rot_{\Gamma}V_{0}\Div_{\Gamma}\jj.$$
The operator $C_{0}^*$ is bounded in $\TT\HH^{-\frac{1}{2}}(\Div_{\Gamma},\Gamma)$. 
We use the following ansatz on the integral representation of the exterior  electric field $\EE^s$:
\begin{equation} \EE^{s} =  -\operatorname{\Psi_{E_{\kappa_{e}}}}\jj -i\eta\operatorname{\Psi_{M_{\kappa_{e}}}}\operatorname{C_{0}^*}\jj\text{ in }\R^3\backslash\bar{\Omega} \end{equation}
$\eta$ is a positive real number and $\jj\in \TT\HH^{-\frac{1}{2}}(\Div_{\Gamma},\Gamma)$.
Thanks to the transmission conditions we have the integral representation of the interior field
\begin{equation} \label{rhd} \EE_{1} =-\frac{1}{\rho}(\Psi_{E_{\kappa_{i}}}\{\gamma_{N_{e}}^c\EE^{inc} +\operatorname{N_{e}}\jj\}) -
(\Psi_{M_{\kappa_{i}}}\{\gamma_{D}^c\EE^{inc} + \operatorname{L_{e}}\jj\})\text{ in }\Omega\end{equation}
 where $\rho=\dfrac{\kappa_{i}\mu_{e}}{\kappa_{e}\mu_{i}}$ and 
 $$ \operatorname{L_{e}}=\operatorname{C_{\kappa_{e}}}-i\eta\left(\frac{1}{2}\Id-\operatorname{M_{\kappa_{e}}}\right)\operatorname{C_{0}^*},$$ $$\operatorname{N_{e}}=\left(\frac{1}{2}\Id-\operatorname{M_{\kappa_{e}}}\right)+i\eta \operatorname{C_{\kappa_{e}}}\operatorname{C_{0}^*}.$$
We apply the exterior Dirichlet trace to the righthandside \eqref{rhd}. The density $\jj$ then solves the following boundary integral equation:

\begin{equation*} \operatorname{\SS} \jj = \rho\left(-\frac{1}{2}I + \operatorname{M_{\kappa_{i}}}\right)\operatorname{L_{e}}\jj + \operatorname{C_{\kappa_{i}}}\operatorname{N_{e}}\jj = -\rho\left(-\frac{1}{2}I + \operatorname{M_{\kappa_{i}}}\right)\gamma_{D}\EE^{inc}+ \operatorname{C_{\kappa_{i}}}\gamma_{N_{\kappa_{e}}}\EE^{inc}\text{ sur }\Gamma.\end{equation*} 
 The operator $\SS$ is linear, bounded and invertible on $\TT\HH^{-\frac{1}{2}}(\Div_{\Gamma},\Gamma)$.

If we are concerned with the far field pattern $\EE^{\infty}$ of the solution, it suffices to replace the potential operators $\Psi_{E_{\kappa_{e}}}$ and $\Psi_{M_{\kappa_{e}}}$ by the far field operators $\Psi_{E_{\kappa_{e}}}^{\infty}$ and $\Psi_{M_{\kappa_{e}}}^{\infty}$ respectively.

The solution $\EE(\Omega)=(\EE^{i}(\Omega),\EE^{s}(\Omega))$ and the far field pattern $\EE^{\infty}(\Omega)$ consists of applications defined by integrals on the boundary $\Gamma$ and if the incident field is a fixed data,   these quantities depend on the scatterrer $\Omega$ only.

\section{Some remarks on shape derivatives} \label{ShapeD}
We want to study the dependance of any functionals $F$  with respect to the shape of the dielectric scatterer $\Omega$. The  $\Omega$-dependance is highly nonlinear. The standard differential calculus tools  need the framework of  topological vector  spaces which are locally convex at least \cite{Schwartz}, framework we do not dispose in the case of shape functionals. An interesting approach consists in representing the variations of the domain $\Omega$ by elements of a function space. We consider  variations generated by transformations of the form $$x\mapsto x+r(x)$$ of any points $x$ in the space $\R^3$, where $r$ is a vectorial function defined (at least) in the neiborhood of $\Omega$. This transformation deforms the domain $\Omega$  in a  domain $\Omega_{r}$ of boundary $\Gamma_{r}$. The functions $r$ are assumed to be a  small enough elements of a Fr\'echet space $\mathcal{X}$ in order that $(\Id+r)$ is an isomorphism from $\Gamma$ to  $$\Gamma_{r}=(\Id+r)\Gamma=\left\{x_{r}=x+r(x); x\in\Gamma\right\}.$$

 Since we consider smooth surfaces, in the remaining of this paper, the space $\mathcal{X}$ will be  the Fr\'echet  space 
$\mathscr{C}^{\infty}_{b}(\R^3,\R^3)=\bigcap\limits_{k\in\N} \mathscr{C}^k_{b}(\R^3,\R^3)$ undowed with the set of non decreasing seminorms $(||\cdot||_{k})_{k\in\N}$ where
 $\mathscr{C}^k_{b}(\R^3,\R^3)$ with $k\in\N$ is  the space of $k$-times continuously differentiable  functions whose the derivatives are bounded  and  $$ ||r||_{k}=\sup_{0\leq p\leq k}\;\sup_{x\in\R^3}\left|r^{(p)}(x)\right|.$$
For $\epsilon$ small enough we set $$B^{\infty}_{\epsilon}= \left\{r\in\left(\mathscr{C}^{\infty}(\overline{\R^3})\right)^3,\;d_{\infty}(0, r)<\epsilon\right\},$$
where $d_{\infty}$ is the metric induced by the seminorms.

 We introduce the application $$ r\in B^{\infty}_{\epsilon}\mapsto\mathcal{F}_{\Omega}(r)=F(\Omega_{r}).$$ We define the shape derivative of the  functional $F$ trough the deformation $\Omega\rightarrow\Omega_{\xi}$ as  the G\^ateaux derivative of the application $\mathcal{F}_{\Omega}$ in the direction $\xi\in\mathcal{X}$.
We write: $$DF[\Omega;\xi]=\frac{\partial}{\partial t}_{|t=0}\mathcal{F}_{\Omega}(t\xi).$$

\subsection{G\^ateaux differentiability: elementary results}
   Fr\'echet spaces are locally convex, metrisable and complete  topological vector spaces  on which we can extend any elementary results available on Banach spaces. We recall some of them. We refer to the Schwarz's book \cite{Schwartz} for more details.
\medskip

Let $\mathcal{X}$ and $\mathcal{Y}$ be Fr\'echet spaces and let $U$ be a subset of $\mathcal{X}$.
  \begin{definition}(\textbf{G\^ateaux semi-derivatives}) 
 The application  $f\;:U\rightarrow \mathcal{Y}$ is said to have  G\^ateaux semiderivative at $r_{0}\in U$ in the direction $\xi\in\mathcal{X}$ if the following limit exists and is finite $$\frac{\partial}{\partial r}f[r_{0};\xi]=\lim_{t\rightarrow0}\dfrac{f(r_{0}+t\xi)-f(r_{0})}{t}=\frac{\partial}{\partial t}_{\big|t=0}f(r_{0}+t\xi).$$\end{definition}

  \begin{definition}(\textbf{G\^ateaux differentiability}) 
   The application  $f\;:U\rightarrow \mathcal{Y}$ is said to be  G\^ateaux differentiable at $r_{0}\in U$ if it has G\^ateaux semiderivatives in all direction $\xi\in\mathcal{X}$ and if the map $$\xi\in\mathcal{X}\mapsto\frac{\partial}{\partial r}f[r_{0};\xi]\in\mathcal{Y}$$ is linear and continuous.\end{definition}
 We say that $f$ is continuously (or $\mathscr{C}^1$-) G\^ateaux differentiable if it is  G\^ateaux differentiable at all $r_{0}\in U$ and the application $$\frac{\partial}{\partial r}f : (r_{0};\xi)\in U\times\mathcal{X}\mapsto\frac{\partial}{\partial r}f[r_{0};\xi]\in\mathcal{Y}$$is continuous.
   \begin{remark}Let us come  to shape functionals. In calculus of shape variation, we usually consider  the G\^ateaux derivative in $r=0$  only. This is due to the result : If $\mathcal{F}_{\Omega}$ is  G\^ateaux differentiable on $B^{\infty}_{\epsilon}$ then for all $\xi\in\mathcal{X}$ we have
   $$\frac{\partial}{\partial r} \mathcal{F}_{\Omega}[r_{0};\xi]=\D F(\Omega_{r_{0}};\xi\circ(\Id+r_{0})^{-1})=\frac{\partial}{\partial r} \mathcal{F}_{\Omega_{r_{0}}}[0;\xi\circ(\Id+r_{0})^{-1}].$$\end{remark}

\begin{definition}(\textbf{higher order derivatives}) 
Let $m\in\N$.  We say that $f$ is $(m+1)$-times continuously (or $\mathscr{C}^{m+1}$-) G\^ateaux differentiable  if it is $\mathscr{C}^{m}$-G\^ateaux differentiable  and $$r\in U\mapsto\dfrac{\partial^{m} }{\partial r^{m}}f[r;\xi_{1},\hdots,\xi_{m}]$$ is continuously G\^ateaux differentiable for all $m$-uple $(\xi_{1},\hdots,\xi_{m})\in\mathcal{X}^{m}$. Then for all $r_{0}\in U$ the  application
$$(\xi_{1},\hdots,\xi_{m+1})\in\mathcal{X}^{m+1}\mapsto\frac{\partial^{m+1}}{\partial r^{m+1}}f[r_{0};\xi_{1},\hdots,\xi_{m+1}]\in\mathcal{Y}$$ is $(m+1)$-linear, symetric and continuous.   We say that $f$ is $\mathscr{C}^\infty$-G\^ateaux differentiable if it is $\mathscr{C}^m$-G\^ateaux differentiable for all $m\in\N$.
 \end{definition}

 We use the notation 
\begin{equation}\frac{\partial^m}{\partial r^m}f[r_{0},\xi]=\frac{\partial^m}{\partial t^m}_{\big|t=0}f(r_{0}+t\xi).\end{equation}
 If it is $\mathscr{C}^m$-G\^ateaux differentiable we have
 \begin{equation}\label{Gsym}\frac{\partial^m}{\partial r^m}f[r_{0},\xi_{1},\hdots,\xi_{m}]=\frac{1}{m!}\sum_{p=1}^m (-1)^{m-p}\sum_{1\leq i_{1}<\cdots<i_{p}\leq m}\dfrac{\partial^m}{\partial r^m}f[r_{0};\xi_{i_{1}}+\hdots+\xi_{i_{p}}].\end{equation}
  To determine  higher order G\^ateaux derivatives it is more easy to use this equality.

The chain and product rules  and the Taylor expansion with integral remainder are still available for  $\mathscr{C}^m$-G\^ateaux differentiable maps (\cite{Schwartz} p. 30).   
  We use the following lemma to study the G\^ateaux differentiability of any applications mapping $r$ on the inverse of an element in a unitary topological algebra.

\begin{lemma}\label{d-1} Let $\mathcal{X}$ be a Fr\'echet space and $\mathcal{Y}$ be a unitary Fr\'echet algebra. Let $U$ be an open set of $\mathcal{X}$. Assume that  the application $f : U\rightarrow \mathcal{Y}$ is G\^ateaux differentiable at $r_{0}\in U$ and that $f(r)$ is invertible in $\mathcal{Y}$ for all $r\in U$ and that the application  $g : r\mapsto f(r)^{-1}$ is continuous at $r_{0}$. Then $g$ is G\^ateaux differentiable  at $r_{0}$ and its first derivative in the  direction $\xi\in\mathcal{X}$ is 

\begin{equation} \frac{\partial}{\partial r} f[r_{0},\xi] = -f(r_{0})^{-1} \circ \frac{\partial}{\partial r} f[r_{0},\xi] \circ f(r_{0})^{-1}.\end{equation}
 Moreover if $f$ is $\mathscr{C}^m$-G\^ateaux differentiable then $g$ is too.
\end{lemma}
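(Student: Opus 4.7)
The plan is to start from the resolvent-style identity for the inverse in any unitary algebra, namely
\begin{equation*}
f(r_{0}+t\xi)^{-1} - f(r_{0})^{-1}
 = -\,f(r_{0}+t\xi)^{-1}\,\bigl(f(r_{0}+t\xi)-f(r_{0})\bigr)\,f(r_{0})^{-1},
\end{equation*}
which is an elementary consequence of multiplying $f(r_{0})-f(r_{0}+t\xi)$ on the left by $f(r_{0}+t\xi)^{-1}$ and on the right by $f(r_{0})^{-1}$. Dividing by $t\neq 0$ gives
\begin{equation*}
\dfrac{g(r_{0}+t\xi)-g(r_{0})}{t}
 = -\,f(r_{0}+t\xi)^{-1}\circ\dfrac{f(r_{0}+t\xi)-f(r_{0})}{t}\circ f(r_{0})^{-1}.
\end{equation*}

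Next I would pass to the limit $t\to 0$ using the three inputs available. Gâteaux differentiability of $f$ at $r_{0}$ gives convergence of the middle factor to $\frac{\partial}{\partial r}f[r_{0};\xi]$ in $\mathcal{Y}$. Continuity of $g$ at $r_{0}$ gives $f(r_{0}+t\xi)^{-1}\to f(r_{0})^{-1}$. Joint continuity of multiplication in the Fréchet algebra $\mathcal{Y}$ (expressible through the seminorms defining its topology) then yields
\begin{equation*}
\frac{\partial}{\partial r}g[r_{0};\xi]
 = -\,f(r_{0})^{-1}\circ\frac{\partial}{\partial r}f[r_{0};\xi]\circ f(r_{0})^{-1}.
\end{equation*}
Linearity in $\xi$ is inherited from $\xi\mapsto\frac{\partial}{\partial r}f[r_{0};\xi]$, and continuity in $\xi$ follows again from joint continuity of the algebra product, so $g$ is Gâteaux differentiable at $r_{0}$.

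For the last sentence, I would argue by induction on $m$. Assume $f$ is $\mathscr{C}^{m}$-Gâteaux differentiable on $U$; in particular $f$ is continuous on $U$, and one first checks that this forces $g$ to be continuous on $U$ (so that the hypothesis of the first part is available at every point of $U$), whence $g$ is Gâteaux differentiable on $U$ with
\begin{equation*}
\frac{\partial}{\partial r}g[r;\xi]
 = -\,g(r)\circ\frac{\partial}{\partial r}f[r;\xi]\circ g(r).
\end{equation*}
This expression is a composition of the three maps $r\mapsto g(r)$, $r\mapsto \frac{\partial}{\partial r}f[r;\xi]$, $r\mapsto g(r)$ with the algebra product, so the product and chain rules for Gâteaux differentiable maps in Fréchet spaces (as recalled from \cite{Schwartz}) let us upgrade the regularity of $g$ by one order as soon as it is known for $f$, completing the induction.

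The main technical point is the passage to the limit: one must ensure that in the Fréchet algebra $\mathcal{Y}$ the product is separately, and in fact jointly, continuous on bounded sets, so that the product of the three converging factors converges to the product of the limits uniformly on the relevant seminorms. Once that continuity is in hand, the rest of the argument is a routine iteration of product/chain rules and poses no further difficulty.
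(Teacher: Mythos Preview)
Your argument is correct and follows essentially the same route as the paper: both proofs rest on the resolvent identity for inverses and pass to the limit using the continuity of $g$ at $r_{0}$ and the G\^ateaux differentiability of $f$. The only organizational difference is that the paper writes the identity with the fixed factor $f(r_{0})^{-1}$ on the left and then splits $f(r_{0}+t\xi)^{-1}=f(r_{0})^{-1}+\bigl(f(r_{0}+t\xi)^{-1}-f(r_{0})^{-1}\bigr)$, so that each term has at most one varying factor; your version keeps the variable factor $f(r_{0}+t\xi)^{-1}$ on the left and passes to the limit in one step, which is slightly slicker but, as you rightly note, leans on joint continuity of the product in $\mathcal{Y}$.
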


\begin{proof} Let $\xi\in\mathcal{X}$ and $t>0$ small enough such that $(r_{0}+t\xi)\in U$, on a:
$$\begin{array}{rl}\hspace{-1mm}g(r_{0}+t\xi)-g(r_{0})=&f(r_{0})^{-1}\circ f(r_{0})\circ f(r_{0}+t\xi)^{-1}-f(r_{0})^{-1}\circ f(r_{0}+t\xi)\circ f(r_{0}+t\xi)^{-1}\vspace{2mm}\\=&f(r_{0})^{-1}\circ\left(f(r_{0})-f(r_{0}+t\xi)\right)\circ f(r_{0}+t\xi)^{-1}\vspace{2mm}\\=&f(r_{0})^{-1}\circ\left(f(r_{0})-f(r_{0}+t\xi)\right)\circ f(r_{0})^{-1}\vspace{2mm}\\&+f(r_{0})^{-1}\circ\left(f(r_{0})-f(r_{0}+t\xi)\right)\circ\left(f(r_{0}+t\xi)^{-1}-f(r_{0})^{-1}\right).\end{array}$$

 Since $g$ is continuous in $r_{0}$, we have $\lim\limits_{t\rightarrow0} \left(f(r_{0}+t\xi)^{-1}-f(r_{0})^{-1}\right)=0$ and since $f$ is G\^ateaux differentiable in $r_{0}$ we have $$\lim\limits_{t\rightarrow0}  \dfrac{f(r_{0})^{-1}\circ\left(f(r_{0})-f(r_{0}+t\xi)\right)\circ f(r_{0})^{-1}}{t}=-\left(f(r_{0})\right)^{-1}\circ \frac{\partial}{\partial r} f[r_{0},\xi]\circ\left(f(r_{0})\right)^{-1}.$$ As a consequence $$\lim_{t\rightarrow0}\frac{g(r_{0}+t\xi)-g(r_{0})}{t}=-\left(f(r_{0})\right)^{-1}\circ\frac{\partial}{\partial r} f[r_{0},\xi]\circ\left(f(r_{0})\right)^{-1}.$$
\end{proof}

\section{G\^ateaux differentiability of pseudo-homogeneous kernels} \label{GDiffPH}

Let $x_{r}$ denote an element of $\Gamma_{r}$ and let $\nn_{r}$ be the outer unit  normal vector to  $\Gamma_{r}$. When $r=0$ we write $\nn_{0}=\nn$. We note again $d\sigma$ the area element on $\Gamma_{r}$.

 In this  section we want to study the G\^ateaux differentiability of  the application mapping  $r\in B^{\infty}_{\epsilon}$ to the integral operator   $\mathcal{K}_{\Gamma_{r}}$ defined for a function $u_{r}\in H^{s}(\Gamma_{r})$ by:
\begin{equation}\mathcal{K}_{\Gamma_{r}}u_{r}(x_{r})=\mathrm{vp.}\int_{\Gamma_{r}}k_{r}(y_{r},x_{r}-y_{r})u_{r}(y_{r})d\sigma(y_{r}),\;x_{r}\in\Gamma_{r}\end{equation}
and of the  application mapping  $r\in B^{\infty}_{\epsilon}$ to the potential operator   $\mathcal{P}_{r}$  defined for a function $u_{r}\in H^{s}(\Gamma_{r})$ by:
\begin{equation}\mathcal{P}_{r}u_{r}(x)=\int_{\Gamma_{r}}k_{r}(y_{r},x-y_{r})u_{r}(y_{r})d\sigma(y_{r}),\;x\in K,\end{equation}
  where $k_{r}\in\mathscr{C}^{\infty}\left(\Gamma_{r}\times\left(\R^3\backslash\{0\}\right)\right)$ is a pseudo-homogeneous kernel of class $-m$ with $m\in\N$.  
  \smallskip
  
   We  want to differentiate  applications of the form $r\mapsto \mathcal{F}_{\Omega}(r)$ where the  domain of definition of $\mathcal{F}_{\Omega}(r)$ varies with $r$. How do we do? 
A first  idea, quite classical (see \cite{PierreHenrot, Potthast2, Potthast1}), is that  instead of  studying the application $$r\in B^{\infty}_{\epsilon}\mapsto \mathcal{F}_{\Omega}(r)\in\mathscr{C}^k(\Gamma_{r})$$ we consider the application $$r\in B^{\infty}_{\epsilon}\mapsto \mathcal{F}_{\Omega}(r)\circ(\Id+r)\in\mathscr{C}^k(\Gamma).$$ An example is $r\mapsto \nn_{r}$.
This point of view can be extended to Sobolev spaces $H^s(\Gamma)$, $s\in\R$.
 From now  we use  the transformation $\tau_{r}$  which maps a function $u_{r}$ defined on $\Gamma_{r}$ to the function $u_{r}\circ(\Id+r)$ defined on $\Gamma$. For all $r\in B^{\infty}_{\epsilon}$, this transformation $\tau_{r}$ admit an inverse. We have
$$(\tau_{r}u_{r})(x)=u_{r}(x+r(x))\text{ and }(\tau_{r}^{-1}u)(x_{r})=u(x).$$
 Then, instead of studying the application 
$$r\in B^{\infty}_{\epsilon}\mapsto \mathcal{K}_{\Gamma_{r}}\in\mathscr{L}_{c}\left(H^s(\Gamma_{r}),H^{s+m}(\Gamma_{r})\right)$$ we  consider the conjugate application
$$r\in B^{\infty}_{\epsilon}\mapsto \tau_{r}\mathcal{K}_{\Gamma_{r}}\tau_{r}^{-1}\in\mathscr{L}_{c}\left(H^s(\Gamma),H^{s+m}(\Gamma)\right).$$
In the framework of boundary integral equations, this approach is sufficient  to obtain the shape differentability of any solution to scalar boundary value problems \cite{Potthast2, Potthast1}.

Using  the change of variable  $x\mapsto x_{r}=x+r(x)$, we have for $u\in H^s(\Gamma)$:
\begin{equation*}\tau_{r}\mathcal{K}_{r}\tau_{r}^{-1}(u)(x)=\int_{\Gamma}k_{r}(y+r(y),x+r(x)-y-r(y))u(y)J_{r}(y)d\sigma(y),\;x\in\Gamma\end{equation*} where $J_{r}$ is the jacobian (the determinant of the Jacobian matrix) of the change of variable mapping $x\in\Gamma$ to $x+r(x)\in\Gamma_{r}$. The differentiablility analysis of these operators begins with the jacobian one. We have 
$$J_{r}=\Jac_{\Gamma}(\Id+r)=||\omega_{r}|| \text{ with }\omega_{r}=\com(\Id+\D r_{|\Gamma})\nn_{0}=\det(\Id+\D r_{|\Gamma})\transposee{(\Id+\D r_{|\Gamma})^{-1}}\nn,$$  
 and  the normal vector $\nn_{r}$ is given by
 $$\nn_{r}=\tau_{r}^{-1}\left(\frac{\omega_{r}}{\|\omega_{r}\|}\right).$$
The first derivative at $r=0$ of these applications are well known \cite{DelfourZolesio, PierreHenrot}. Here we present one method to obtain higher order derivative.

\begin{lemma}\label{J} The application $J$ mapping $r\in B^{\infty}_{\epsilon}$ to the jacobian $J_{r}\in\mathscr{C}^{\infty}(\Gamma,\R)$  is $\mathscr{C}^{\infty}$ G\^ateaux differentiable and its first derivative  at $r_{0}$ is  defined for $\xi\in\mathscr{C}^{\infty}_{b}(\R^3,\R^3)$ by:$$\frac{\partial J}{\partial r}[r_{0},\xi]=J_{r_{0}}(\tau_{r_{0}}\Div_{\Gamma_{r_{0}}}\tau^{-1}_{r_{0}})\xi.$$ 
\end{lemma}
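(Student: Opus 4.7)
The plan is to establish $\mathscr{C}^\infty$ G\^ateaux differentiability by writing $r\mapsto J_r$ as a composition of elementary smooth operations, and then to identify the first derivative at a general point $r_0\in B^\infty_\epsilon$ by a reparametrisation that reduces to the base case $r_0 = 0$.

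For the smoothness, I would work in the unital Fr\'echet algebra $\mathcal{Y} = \mathscr{C}^\infty(\Gamma,\R^{3\times 3})$. The assignment $r\mapsto A_r := \Id + [\D r]_{|\Gamma}$ is affine and continuous from $B^\infty_\epsilon$ to $\mathcal{Y}$, hence trivially $\mathscr{C}^\infty$ G\^ateaux. On the open set of $\mathcal{Y}$ consisting of pointwise-invertible matrix fields (a neighbourhood of $A_0 = \Id$), the maps $A\mapsto \det A$ and $A\mapsto A^{-1}$ are $\mathscr{C}^\infty$ G\^ateaux, the first because it is polynomial and the second by Lemma \ref{d-1}. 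Combining these with continuous multiplications in $\mathcal{Y}$ and with the continuous linear map $M\mapsto M\nn$ into $\mathscr{C}^\infty(\Gamma,\R^3)$, one obtains that $r\mapsto\omega_r$ is $\mathscr{C}^\infty$ G\^ateaux. Since $\omega_0 = \nn$ does not vanish and $r\mapsto\omega_r$ is continuous, $\omega_r(x)$ stays bounded away from $0$ uniformly on $\Gamma$ for $r$ near any fixed $r_0$, so composing with the smooth map $v\mapsto\|v\|$ on $\R^3\setminus\{0\}$ yields $\mathscr{C}^\infty$ G\^ateaux differentiability of $r\mapsto J_r = \|\omega_r\|$.

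For the explicit first derivative, I would factorise the deformation through $\Gamma_{r_0}$. For small $t$ one has the identity
$$\Id + r_0 + t\xi \,=\, (\Id + t\,\widetilde{\xi}_{r_0})\circ(\Id + r_0),\qquad \widetilde{\xi}_{r_0} \,:=\, \tau_{r_0}^{-1}\xi \,=\, \xi\circ(\Id+r_0)^{-1},$$
viewed on $\Gamma_{r_0}$. Multiplicativity of the surface Jacobian along the composition yields
$$J_{r_0 + t\xi}(x) \,=\, J_{r_0}(x)\cdot \bigl(J^{\Gamma_{r_0}}_{t\widetilde{\xi}_{r_0}}\bigr)(x+r_0(x)),$$
where $J^{\Gamma_{r_0}}_{s}$ denotes the analog of $J_{s}$ with $\Gamma$ replaced by $\Gamma_{r_0}$. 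The first factor is independent of $t$, so differentiating at $t = 0$ and applying the classical first-order identity $\partial_t|_{t=0}J^{\Gamma'}_{ts} = \Div_{\Gamma'}s$ on $\Gamma' = \Gamma_{r_0}$ with $s = \widetilde{\xi}_{r_0}$ gives exactly the formula claimed.

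The main obstacle is the base-case identity $\partial_t|_{t=0}J_{t\xi} = \Div_\Gamma \xi$ on the initial surface $\Gamma$. Starting from $J_{t\xi} = \|\omega_{t\xi}\|$ with $\omega_{t\xi} = \det(\Id + t[\D\xi]_{|\Gamma})\transposee{(\Id + t[\D\xi]_{|\Gamma})^{-1}}\nn$, this requires combining the Jacobi rule $\partial_t|_{t=0}\det(\Id + tM) = \Tr M$, the first-order expansion $\partial_t|_{t=0}(\Id + tM)^{-1} = -M$ of the matrix inverse, and the identity $\Tr([\D\xi]_{|\Gamma}) - ([\D\xi]_{|\Gamma}\nn)\cdot\nn = \Div_\Gamma \xi$, which is precisely the content of definition \eqref{D}. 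This algebraic reduction is routine but is the single step where the intrinsic structure of $\Div_\Gamma$ actually enters.
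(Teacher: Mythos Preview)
Your argument is correct, but the route differs from the paper's. The paper works in local coordinates: it parametrises $\Gamma$ by an atlas, writes $\omega_r(x)$ as the wedge product $\bigwedge_{i=1}^{n-1} e_i(r,x)$ of the pushed-forward tangent vectors $e_i(r,x) = [(\Id+\D r)(x)]e_i(x)$, and reads off $\mathscr{C}^\infty$-G\^ateaux differentiability from the $(n-1)$-linearity of the wedge. The derivative at a general $r_0$ is then computed directly via the identity $\sum_i \cdots\wedge Ae_i\wedge\cdots = (\Tr A\,\Id - \transposee{A})\bigwedge_i e_i$, applied with $A = [\tau_{r_0}\D_{\Gamma_{r_0}}\tau_{r_0}^{-1}\xi]$; this produces explicit formulas $(\#)$ for all orders and in particular shows the derivatives of $W$ of order $\ge n$ vanish identically.

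Your approach instead treats smoothness abstractly as a composition of Fr\'echet-smooth operations on $\mathscr{C}^\infty(\Gamma,\R^{3\times 3})$, and then reduces the identification of the first derivative at $r_0$ to the base case $r_0=0$ via the factorisation $\Id+r_0+t\xi = (\Id+t\,\tau_{r_0}^{-1}\xi)\circ(\Id+r_0)$ and multiplicativity of the surface Jacobian. This is exactly the mechanism behind Remark~4.3 and is a clean way to get the stated formula. The trade-off is that the paper's local-coordinate computation simultaneously yields the higher-order structure of $W$ (used immediately afterwards for the normal vector), whereas your argument gives only the first derivative explicitly. As a minor simplification, note that $\omega_r = \com(\Id+\D r_{|\Gamma})\nn$ is polynomial in the entries of $\D r$, so you could bypass the appeal to Lemma~\ref{d-1} for $A\mapsto A^{-1}$ altogether.
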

\begin{proof} We just have to prove the $\mathscr{C}^{\infty}$-G\^ateaux differentiability of $W:r\mapsto w_{r}$. We do the proof for hypersurfaces $\Gamma$ of $\R^n$, $n\in\N$, $n\ge2$. We use local coordinate system. Assume that $\Gamma$ is parametrised by an atlas  $(\mathcal{O}_{i},\phi_{i})_{1\leq i\leq p}$  then $\Gamma_{r}$ can be parametrised by the atlas $(\mathcal{O}_{i},(\Id+r)\circ\phi_{i})_{1\leq i\leq p}$. For any $x\in\Gamma$, let us note $e_{1}(x),e_{2}(x),\hdots,e_{n-1}(x)$  the vector basis of the tangent plane to $\Gamma$ at $x$. The vector basis of the tangent plane to $\Gamma_{r}$ at $x+r(x)$ are given by 
$$e_{i}(r,x)=[(\Id+\D r)(x)]e_{i}(x)\quad\text{for }i=1,\hdots,n-1.$$ 
Thus, we have $\omega_{r}(x)=\dfrac{\bigwedge\limits_{i=1}^{n-1} e_{i}(r,x)}{\left|\bigwedge\limits_{i=1}^{n-1}e_{i}(x)\right|}$.
Since the applications $r\mapsto e_{i}(r,x)$, for $i=1,\hdots,n-1$ are $\mathscr{C}^{\infty}$-G\^ateaux differentiable, the application $W$ is too. Now want to compute the derivatives using the formula \eqref{Gsym}. Let $\xi\in\mathscr{C}^{\infty}_{b}(\R^n,\R^n)$ and $t$ small enough. We have at $r_{0}\in B_{\epsilon}^{\infty}$

$$\frac{\partial^m W}{\partial r^m}[r_{0},\xi]=\frac{\partial^m}{\partial t^m}_{\Big| t=0}\,\frac{\bigwedge\limits_{i=1}^{n-1}(\Id+Dr_{0}+tD\xi)e_{i}(x)}{\left|\bigwedge\limits_{i=1}^{n-1}e_{i}(x)\right|}.$$
To simplify this expression one have to note that $$\begin{array}{cl}[\D\xi(x)]e_{i}(x)&=[\D\xi(x)][(\Id+\D r_{0})(x)]^{-1}[(\Id+\D r_{0})(x)]e_{i}(x)\\&=[\D\xi(x)][\D(\Id+ r_{0})^{-1}(x+r_{0}(x))][(\Id+\D r_{0})(x)]e_{i}(x)\\&=[(\tau_{r_{0}}\D\tau_{r_{0}}^{-1})\xi(x)]e_{i}(r_{0},x)=[(\tau_{r_{0}}\D_{\Gamma_{r_{0}}}\tau_{r_{0}}^{-1})\xi(x)]e_{i}(r_{0},x).\end{array}$$
NB: given a $(n\times n)$ matrix $A$  we have $$\sum_{i=1}^{n-1}\cdots \times e_{i-1}\times A e_{i}\times e_{i+1}\times\cdots=(\Tr(A)\Id-\transposee{A})\bigwedge_{i=1}^{n-1}e_{i}.$$
 Thus we have with $A=[\tau_{r_{0}}\D_{\Gamma_{r_{0}}}\tau_{r_{0}}^{-1}\xi]$ and $B_{0}=\Id$, $B_{1}(A)=\Tr(A)\Id-\transposee{A}$
$$(\#)\left\{\begin{array}{ccl}W(r_{0})&=&J_{r_{0}}(\tau_{r_{0}}\nn_{r_{0}}),\vspace{2mm}\\\dfrac{\partial W}{\partial r}[r_{0},\xi]&=&J_{r_{0}}\left((\tau_{r_{0}}\Div_{\Gamma_{r_{0}}}\tau_{r_{0}}^{-1})\xi\cdot\tau_{r_{0}}\nn_{r_{0}}-[(\tau_{r_{0}}\nabla_{\Gamma_{r_{0}}}\tau_{r_{0}}^{-1})\xi]\tau_{r_{0}}\nn_{r_{0}}\right)\\&=&[B_{1}(A)\xi]W(r_{0}),\vspace{2mm}\\\dfrac{\partial^{m} W}{\partial r^{m}}[r_{0},\xi]&=&[B_{m}(A)\xi]W(r_{0})\\&=&\sum\limits_{i=1}^{m}(-1)^{i+1}\dfrac{(m-1)!}{(m-i)!}[B_{1}(A^{i})B_{m-i}(A)\xi]W(r_{0})\,\text{ for }m=1,\hdots,n-1,\vspace{2mm}\\\dfrac{\partial^{m} W}{\partial r^{m}}[r_{0},\xi]&\equiv&0\text{ for all }m\ge n.\end{array}\right.$$

It follows that 
$$\frac{\partial J}{\partial r}[r_{0},\xi]=\frac{1}{\|W(r_{0})\|}\frac{\partial W}{\partial r}[r_{0},\xi]\cdot W(r_{0})=\frac{\partial W}{\partial r}[r_{0},\xi]\cdot\tau_{r_{0}}\nn_{r_{0}}=J_{r_{0}}(\tau_{r_{0}}\Div_{\Gamma_{r_{0}}}\tau_{r_{0}}^{-1})\xi.$$

\end{proof}
Thanks to $(\#)$ we deduce  easily the G\^ateaux differentiability of $r\mapsto \tau_{r}\nn_{r}$. 
\begin{lemma} \label{N} The application $N$ mapping $r\in B^{\infty}_{\epsilon}$ to $\tau_{r}\nn_{r}=\nn_{r}\circ(\Id+r)\in\mathscr{C}^{\infty}(\Gamma,\R^3)$ is $\mathscr{C}^{\infty}$ G\^ateaux-differentiable and its first derivative  at $r_{0}$ is  defined for $\xi\in\mathscr{C}^{\infty}_{b}(\R^3,\R^3)$ by:  $$\frac{\partial N}{\partial r}[r_{0},\xi]=-\left[\tau_{r_{0}}\nabla_{\Gamma_{r_{0}}}\tau_{r_{0}}^{-1}\xi\right]N(r_{0}).$$ 
\end{lemma}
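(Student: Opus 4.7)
The plan is to write $N(r)=W(r)/\|W(r)\|=W(r)/J(r)$ and invoke the $\mathscr{C}^\infty$-Gâteaux differentiability of $W$ and of $J$ already established in the proof of Lemma \ref{J}. The main observation is that, since $r\in B^\infty_\epsilon$ guarantees $J_r$ stays uniformly bounded away from zero on the compact surface $\Gamma$, the inversion $r\mapsto 1/J_r$ is well defined and continuous into the unitary Fréchet algebra $\mathscr{C}^\infty(\Gamma,\R)$. Applying Lemma \ref{d-1} yields the $\mathscr{C}^\infty$-Gâteaux differentiability of $r\mapsto J_r^{-1}$, and combining it (via the product rule for Gâteaux-differentiable maps into a Fréchet algebra) with the differentiability of $W$ shown in Lemma \ref{J} gives the $\mathscr{C}^\infty$-Gâteaux differentiability of $N$.

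For the explicit form of the first derivative, I would apply the quotient rule at $r_0$ with $\xi\in\mathscr{C}^\infty_b(\R^3,\R^3)$:
$$
\frac{\partial N}{\partial r}[r_0,\xi]=\frac{1}{J_{r_0}}\frac{\partial W}{\partial r}[r_0,\xi]-\frac{W(r_0)}{J_{r_0}^{\,2}}\frac{\partial J}{\partial r}[r_0,\xi].
$$
Substituting the formulas $(\#)$ for $W(r_0)$ and $\partial W/\partial r[r_0,\xi]$ from the proof of Lemma \ref{J}, together with $\partial J/\partial r[r_0,\xi]=J_{r_0}(\tau_{r_0}\Div_{\Gamma_{r_0}}\tau_{r_0}^{-1})\xi$ and $W(r_0)/J_{r_0}=\tau_{r_0}\nn_{r_0}=N(r_0)$, gives
$$
\frac{\partial N}{\partial r}[r_0,\xi]=\bigl((\tau_{r_0}\Div_{\Gamma_{r_0}}\tau_{r_0}^{-1})\xi\bigr)\,N(r_0)-[(\tau_{r_0}\nabla_{\Gamma_{r_0}}\tau_{r_0}^{-1})\xi]\,N(r_0)-\bigl((\tau_{r_0}\Div_{\Gamma_{r_0}}\tau_{r_0}^{-1})\xi\bigr)\,N(r_0).
$$
The scalar divergence terms cancel exactly and only the tangential gradient contribution survives, yielding
$$
\frac{\partial N}{\partial r}[r_0,\xi]=-[\tau_{r_0}\nabla_{\Gamma_{r_0}}\tau_{r_0}^{-1}\xi]\,N(r_0),
$$
which is the desired formula.

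The only subtle point (rather than a genuine obstacle) is justifying smoothness of the normalization: one must check that the pointwise inversion $J\mapsto 1/J$ is well behaved on $\mathscr{C}^\infty(\Gamma,\R)$, which is where Lemma \ref{d-1} applied to the unitary Fréchet algebra $\mathscr{C}^\infty(\Gamma,\R)$ is used. Higher-order derivatives follow in principle by iterating the product and chain rules on $N(r)=J(r)^{-1}W(r)$ using the closed-form iteration for $\partial^m W/\partial r^m$ given in $(\#)$, but the explicit expression of the first derivative is all that is required here.
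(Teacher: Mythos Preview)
Your proposal is correct and follows essentially the same approach as the paper: both write $N(r)=W(r)/\|W(r)\|$ and differentiate using the results $(\#)$ from the proof of Lemma~\ref{J}. The only cosmetic difference is that the paper differentiates the normalization $v\mapsto v/\|v\|$ directly (obtaining $\|W\|^{-1}\partial W - \|W\|^{-3}(\partial W\cdot W)\,W$), whereas you recycle the already-computed formula for $\partial J/\partial r$ and apply the quotient rule to $W/J$; the two computations are algebraically equivalent and lead to the same cancellation.
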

\begin{proof} This results from the precedent proof and we have
$$\begin{array}{ccl}\dfrac{\partial N}{\partial r}[r_{0},\xi]&=&\dfrac{1}{\|W(r_{0})\|}\dfrac{\partial W}{\partial r}[r_{0},\xi]-\dfrac{1}{\|W(r_{0})\|^3}\left(\dfrac{\partial W}{\partial r}[r_{0},\xi]\cdot W(r_{0})\right) W(r_{0})\vspace{2mm}\\&=&J_{r_{0}}^{-1}\left(\dfrac{\partial W}{\partial r}[r_{0},\xi]-\left(\dfrac{\partial W}{\partial r}[r_{0},\xi]\cdot(\tau_{r_{0}}\nn_{r_{0}})\right)\right)\tau_{r_{0}}\nn_{r_{0}}\vspace{2mm}\\&=&-\left[\tau_{r_{0}}\nabla_{\Gamma_{r_{0}}}\tau_{r_{0}}^{-1}\xi\right]\tau_{r_{0}}\nn_{r_{0}}.\end{array}$$
\end{proof}
To obtain higher order shape derivatives of these applications one can use the equalities $(\#)$ and 
$$(*)\left\{\begin{array}{ccl}\|\tau_{r}\nn_{r}\|&\equiv&1,\vspace{2mm}\\\dfrac{\partial^m N\cdot N}{\partial r^m}[r_{0},\xi]&\equiv&0\text{ for all }m\ge1.\end{array}\right.$$
As for exemple we have at $r=0$ in the direction $\xi\in\mathscr{C}^{\infty}_{b}(\R^3,\R^3)$:
$$\frac{\partial J}{\partial r}[0,\xi]=\Div_{\Gamma}\xi \text{ and }\frac{\partial N}{\partial r}[0,\xi]=-[\nabla_{\Gamma}\xi]\nn,$$
$$\frac{\partial^2 J}{\partial r^2}[0,\xi_{1},\xi_{2}]=-\Tr([\nabla_{\Gamma}\xi_{2}][\nabla_{\Gamma}\xi_{1}])+\Div_{\Gamma}\xi_{1}\cdot \Div_{\Gamma}\xi_{2}+\left([\nabla_{\Gamma}\xi_{1}]\nn\cdot[\nabla_{\Gamma}\xi_{2}]\nn\right).$$
Notice that $\Tr([\nabla_{\Gamma}\xi_{2}][\nabla_{\Gamma}\xi_{1}])=\Tr([\nabla_{\Gamma}\xi_{1}][\nabla_{\Gamma}\xi_{2}])$,
$$\frac{\partial^2 N}{\partial r^2}[0,\xi_{1},\xi_{2}]=[\nabla_{\Gamma}\xi_{2}][\nabla_{\Gamma}\xi_{1}]\nn+[\nabla_{\Gamma}\xi_{1}][\nabla_{\Gamma}\xi_{2}]\nn-\left([\nabla_{\Gamma}\xi_{1}]\nn\cdot[\nabla_{\Gamma}\xi_{2}]\nn\right)\nn.$$
For $n\ge3$ it needs too long calculations to simplify the expression of the derivatives and we only obtain the quadratic expression.  In the last section we give a second method to obtain higher order derivatives using the G\^ateaux derivatives of the surface differential operators.

\begin{remark} We do not need more than the first derivative of the deformations $\xi$. As a consequence for hypersurfaces of class $\mathscr{C}^{k+1}$, it suffice to consider deformations of class $\mathscr{C}^{k+1}$ to conserve the regularity of the jacobian and of the normal vector by differentiation.\end{remark}
\bigskip

   The following theorem  establish sufficient conditions for the G\^ateaux differentiability  of the boundary integral operators described here above and that we obtain their derivatives by deriving their kernels.
  \begin{theorem}\label{PDF} Let $k\in\N$. We set $(\Gamma\times\Gamma)^*=\left\{(x,y)\in\Gamma\times\Gamma; \;x\not=y\right\}$.
  Assume that
  \medskip
  
  1) For all fixed $(x, y)\in(\Gamma\times\Gamma)^*$ the function
  $$\begin{array}{cccl}f:&B^{\infty}_{\epsilon}&\rightarrow&\C\\&r&\mapsto& k_{r}(y+r(y),x+r(x)-y-r(y))J_{r}(y)\end{array}$$
is $\mathscr{C}^{k+1}$-G\^ateaux differentiable.\newline

2) The functions $(y,x-y)\mapsto f(r_{0})(y,x-y)$ and $$(y,x-y)\mapsto\dfrac{\partial^{\alpha}}{\partial r^{\alpha}}f[r_{0},\xi_{1},\hdots,\xi_{\alpha}](y,x-y)$$ are pseudo-homogeneous of class $-m$ for all $r_{0}\in B_{\epsilon}^{\infty}$, for all $\alpha=1,\hdots,k+1$ and for all $\xi_{1},\hdots,\xi_{k+1}\in\mathscr{C}^{\infty}_{b}(\R^3,\R^3)$.

Then the application $$\begin{array}{ccl}B_{\epsilon}^{\infty}&\rightarrow &\mathscr{L}_{c}(H^{s}(\Gamma), H^{s+m}(\Gamma))\\r&\mapsto&\tau_{r}\mathcal{K}_{\Gamma_{r}}\tau_{r}^{-1}\end{array}$$ is $\mathscr{C}^{k}$-G\^ateaux differentiable and
\begin{equation*}\frac{\partial^k }{\partial r^k}\left\{\tau_{r}\mathcal{K}_{\Gamma_{r}}\tau_{r}^{-1}\right\}[r_{0},\xi_{1},\hdots,\xi_{k}]u(x)=\int_{\Gamma}\frac{\partial^k}{\partial r^k}f[r_{0},\xi_{1},\hdots,\xi_{k}](y,x-y)u(y)d\sigma(y).\end{equation*}

\end{theorem}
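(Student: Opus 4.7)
The plan is to proceed by induction on $k \geq 0$, using the pointwise Taylor expansion of the scalar kernel $f(r)(y,x-y) = k_r(y+r(y),x+r(x)-y-r(y))J_r(y)$ combined with Theorem 3.4 (Nedelec), which says that a pseudo-homogeneous kernel of class $-m$ produces a bounded operator $H^s(\Gamma) \to H^{s+m}(\Gamma)$ whose norm is controlled by a finite number of the seminorms appearing in the definition of such kernels. Setting $T(r) = \tau_r \mathcal{K}_{\Gamma_r}\tau_r^{-1}$, the candidate derivatives at $r_0$ are the integral operators with kernels $\frac{\partial^j f}{\partial r^j}[r_0,\xi_1,\ldots,\xi_j]$, which by hypothesis (2) are pseudo-homogeneous of class $-m$ and thus bounded from $H^s(\Gamma)$ to $H^{s+m}(\Gamma)$.

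For the base case $k=0$, I need to show that $r\mapsto T(r)$ is continuous into $\mathscr{L}_c(H^s(\Gamma),H^{s+m}(\Gamma))$. Since $f$ is at least $\mathscr{C}^1$-Gâteaux differentiable, the fundamental theorem of calculus applied to the scalar map $t \mapsto f(r_0+t\xi)(y,x-y)$ gives, pointwise on $(\Gamma\times\Gamma)^*$,
$$f(r_0+t\xi)(y,x-y) - f(r_0)(y,x-y) = t\int_0^1 \frac{\partial f}{\partial r}[r_0+st\xi,\xi](y,x-y)\,ds.$$
By hypothesis (2) each integrand is pseudo-homogeneous of class $-m$, so by Theorem 3.4 the associated integral operator is bounded $H^s\to H^{s+m}$ with norm controlled uniformly in $(s,t)$ for $|t|\leq t_0$. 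This yields $\|T(r_0+t\xi)-T(r_0)\|_{\mathscr{L}_c} = O(t)$, giving continuity.

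For the inductive step, assume the statement for all orders up to $k-1$, and let $\xi \in \mathscr{C}^{\infty}_b(\R^3,\R^3)$. Since $f$ is $\mathscr{C}^{k+1}$-Gâteaux differentiable, the kernel $r\mapsto \frac{\partial f}{\partial r}[r,\xi]$ satisfies the hypotheses of the theorem at order $k-1$, so by induction the operator-valued map $r\mapsto T^{(1)}(r)[\xi]$, with kernel $\frac{\partial f}{\partial r}[r,\xi]$, is $\mathscr{C}^{k-1}$-Gâteaux differentiable into $\mathscr{L}_c(H^s,H^{s+m})$, and its derivatives are the operators whose kernels are the successive $r$-derivatives of $\frac{\partial f}{\partial r}[r,\xi]$. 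To establish that $T^{(1)}(r_0)[\xi]$ is the first Gâteaux derivative of $T$ at $r_0$, apply Taylor to the scalar kernel again to get
$$\frac{f(r_0+t\xi)-f(r_0)}{t} - \frac{\partial f}{\partial r}[r_0,\xi] = \int_0^1 \left(\frac{\partial f}{\partial r}[r_0+st\xi,\xi] - \frac{\partial f}{\partial r}[r_0,\xi]\right)ds;$$
taking the $\mathscr{L}_c$-norm, the integrand tends to zero in operator norm as $t\to 0$ by the continuity (already established) of $T^{(1)}(\cdot)[\xi]$, while being uniformly bounded in $(s,t)$. Passing the limit under the integral shows the Gâteaux derivative exists and equals $T^{(1)}(r_0)[\xi]$. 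Linearity and continuity in $\xi$ follow from the pointwise linearity and continuity of $\xi \mapsto \frac{\partial f}{\partial r}[r_0,\xi]$ combined with Theorem 3.4. Higher-order derivatives are then recovered either by iterating this argument or through the symmetrization formula \eqref{Gsym}.

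The principal obstacle is the uniformity of the operator-norm bounds provided by Theorem 3.4 across the family of kernels $\frac{\partial^j f}{\partial r^j}[r_0 + st\xi, \xi,\ldots,\xi]$ as $s\in[0,1]$ and $t$ varies in a small neighborhood of zero. What is required is that the finitely many seminorms of these pseudo-homogeneous kernels entering the bound in Theorem 3.4 be locally bounded in $r$. This is not immediate from hypothesis (2) alone, which only guarantees pseudo-homogeneity for each fixed parameter; one must unfold the asymptotic expansion \eqref{(dev)} and track how it behaves under the smooth change of variables $x\mapsto x+r(x)$ and multiplication by $J_r$, using the $\mathscr{C}^\infty$-Gâteaux differentiability of $J$ and $N$ established in Lemmas \ref{J} and \ref{N}. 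Once this uniform control is in place, the dominated-convergence-type arguments above go through and the induction closes.
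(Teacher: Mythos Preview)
Your proposal is correct and follows essentially the same strategy as the paper: pass the derivative under the integral sign by controlling a Taylor remainder of the scalar kernel $f(r)(y,x-y)$, using that all the kernels appearing are pseudo-homogeneous of class $-m$ and hence define bounded operators $H^s(\Gamma)\to H^{s+m}(\Gamma)$ via Theorem~3.4.

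The only methodological difference is in the order of the Taylor expansion. The paper writes the \emph{second}-order expansion with integral remainder,
\[
f(r_0+t\xi)-f(r_0)=t\,\frac{\partial f}{\partial r}[r_0,\xi]+t^2\int_0^1(1-\lambda)\,\frac{\partial^2 f}{\partial r^2}[r_0+\lambda t\xi,\xi]\,d\lambda,
\]
so that after dividing by $t$ the remainder is manifestly $O(t)$ in operator norm, and one reads off the derivative in a single step. You instead use a first-order remainder and bootstrap via the continuity of $r\mapsto T^{(1)}(r)[\xi]$ established in your base case. Both arguments work and require the same ingredient you rightly flag as the crux: a bound on the operator norm of the remainder that is uniform in the integration parameter. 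The paper treats this point rather summarily (invoking Lebesgue's theorem and taking a $\sup$ over $\lambda\in[0,1]$), whereas you spell out that one needs local uniform control of the pseudo-homogeneous seminorms in $r$; your discussion here is in fact more careful than the paper's. For higher orders the paper simply says one repeats the argument with $\partial^k f/\partial r^k$ in place of $f$, which is exactly your inductive step.
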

\begin{proof}
We  use the  linearity of the integral and Taylor expansion with integral remainder. We do the proof   $k=1$ only. Let $r_{0}\in B_{\epsilon}^{\infty}$, $\xi\in\mathscr{C}^{\infty}(\R^n,\R^n)$ and $t$ small enough such that $r_{0}+t\xi\in B_{\epsilon}^{\infty}$. We have
$$f(r_{0}+t\xi,x,y)-f(r_{0},y,x-y)=t\frac{\partial f}{\partial r}[r_{0},\xi](y,x-y)+t^2\int_{0}^1(1-\lambda)\frac{\partial^2 f}{\partial r^2}[r_{0}+\lambda t\xi,\xi](y,x-y)d\lambda.$$
We have to verify that each terms in the equality here above is a kernel of an operator mapping $H^s(\Gamma)$ onto  $H^{s+m}(\Gamma)$.  The two first terms in the left hand side are kernels of class  $-m$ and by hypothesis  $\dfrac{\partial^2 f}{\partial r^2}[0,\xi]$ is also a kernel of class $-m$. It remains to prove that  the operator with kernel $$(x,y)\mapsto \int_{0}^1(1-\lambda)\frac{\partial^2 f}{\partial r^2}[r_{0}+\lambda t\xi,\xi](x,y)$$ acts from $H^s(\Gamma)$ to $H^{s+m}(\Gamma)$. Since $\dfrac{\partial^2 f}{\partial r^2}[r_{0}+\lambda t\xi,\xi]$ is pseudo-homogeneous of class  $-m$ for all $\lambda\in[0,1]$, it suffice to  use Lebesgue's theorem in order to invert the integration with respect to the variable $\lambda$ and the  integration with respect to $y$ on $\Gamma$. 
$$\begin{array}{ll}&\displaystyle{\left\|\int_{\Gamma}\left(\int_{0}^1(1-\lambda)\frac{\partial^2 f}{\partial r^2}[r_{0}+\lambda t\xi,\xi](x,y)d\lambda\right)u(y)d\sigma(y)\right\|_{H^{s+m}(\Gamma)}}\vspace{2mm}\\=&\displaystyle{\left\|\int_{0}^1(1-\lambda)\left(\int_{\Gamma}\frac{\partial^2 f}{\partial r^2}[r_{0}+\lambda t\xi,\xi](x,y)u(y)d\sigma(y)\right)d\lambda\right\|_{H^{s+m}(\Gamma)}}\vspace{2mm}\\\leq&\sup_{\lambda\in[0,1]}\displaystyle{\left\|\left(\int_{\Gamma}\frac{\partial^2 f}{\partial r^2}[r_{0}+\lambda t\xi,\xi](x,y)u(y)d\sigma(y)\right)\right\|_{H^{s+m}(\Gamma)}}\vspace{2mm}\\\leq&C||u||_{H^s(\Gamma)}.\end{array}$$

We then have \begin{equation*}\begin{split}&\dfrac{1}{t}\left(\int_{\Gamma}f(r_{0}+t\xi,x,y)u(y)d\sigma(y)-\int_{\Gamma}f(r_{0},x,y)u(y)d\sigma(y)\right)\\=&\int_{\Gamma}\frac{\partial f}{\partial r}[r_{0},\xi](x,y)u(y)d\sigma(y)+\;t\int_{\Gamma}\left(\int_{0}^1(1-\lambda)\frac{\partial^2 f}{\partial r^2}[r_{0}+\lambda t\xi,\xi](x,y)d\lambda\right)u(y)d\sigma(y).\end{split}\end{equation*}We pass to the  limit in $t=0$ and  we obtain the first G\^ateaux derivative. For higher order derivative it suffice to write the proof with $\dfrac{\partial^k}{\partial r^k}f[r_{0},\xi_{1},\hdots,\xi_{k}]$ instead of $f$. The linearity, the symetry and the continuity of the first derivative is deduced from the kernel one.
\end{proof}

Now we will consider some particular classes of pseudo-homogeneous kernels.
\begin{corollary} Assume that the kernels $k_{r}$ are of the form
$$k_{r}(y_{r},x_{r}-y_{r})=G(x_{r}-y_{r})$$
where $G$ is pseudo-homogeneous kernel which do not depend on $r$. Then the application $$\begin{array}{ccl}B_{\epsilon}^{\infty}&\rightarrow &\mathscr{L}_{c}(H^{s}(\Gamma), H^{s+m}(\Gamma))\\r&\mapsto&\tau_{r}\mathcal{K}_{\Gamma_{r}}\tau_{r}^{-1}\end{array}$$ is $\mathscr{C}^{\infty}$-G\^ateaux differentiable and
 the kernel of the first derivative at $r=0$ is defined for $\xi\in\mathscr{C}^{\infty}_{b}(\R^3,\R^3)$ by
\begin{equation*}\begin{split} \frac{\partial\left\{G(x+r(x)-y-r(y))\right\}}{\partial r}[0,\xi]=(\xi(x)-\xi(y))\cdot\nabla^zG(x-y)+G(x-y)\Div_{\Gamma}\xi(y).\end{split}\end{equation*}  
\end{corollary}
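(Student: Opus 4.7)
The strategy is to invoke Theorem~\ref{PDF} applied to
$$f(r)(y,x-y)\;=\;G\bigl(x+r(x)-y-r(y)\bigr)\,J_r(y),$$
reducing the corollary to verifying its two hypotheses at every order, and then to compute the first derivative at $r=0$ via the product and chain rules.

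For hypothesis (1), fix $(x,y)\in(\Gamma\times\Gamma)^*$. The map $r\mapsto x+r(x)-y-r(y)$ is continuous and affine, and for $\epsilon$ small enough it takes values in a compact subset of $\R^3\setminus\{0\}$ uniformly on $B^\infty_\epsilon$; since $G\in\mathscr{C}^\infty(\R^3\setminus\{0\})$, the chain rule yields the $\mathscr{C}^\infty$-G\^ateaux differentiability of the composition. Combined with Lemma~\ref{J} for $r\mapsto J_r$ and the product rule, hypothesis (1) follows at every order.

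The main obstacle is hypothesis (2): the pseudo-homogeneity of class $-m$ of $f(r_0)$ and of each of its iterated G\^ateaux derivatives, as functions of $(y,z)$ with $z=x-y$. Two observations drive the verification. First, writing
$$x+r_0(x)-y-r_0(y)=[\Id+\D r_0(y)]\,z+\mathcal{O}(|z|^2),$$
with $\Id+\D r_0(y)$ invertible for $\epsilon$ small, the asymptotic expansion \eqref{(dev)} of $G$ in homogeneous terms transports under this smooth $(y,z)$-dependent diffeomorphism to an analogous expansion for $G(x+r_0(x)-y-r_0(y))$ of the same class $-m$. Second, for any smooth $\xi$ one has the Hadamard-type identity $\xi(x)-\xi(y)=\int_0^1[\D\xi(y+tz)]\,z\,dt=\mathcal{O}(|z|)$, depending smoothly on $(y,z)$. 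Each differentiation of $G(x+r(x)-y-r(y))$ with respect to $r$ in a direction $\xi_i$ produces a factor $(\xi_i(x)-\xi_i(y))$ contracted against a first derivative of $G$: the differentiation of $G$ lowers the class by one, but the Hadamard factor of order $|z|$ restores it to $-m$. Iterating this argument and accounting for derivatives of $J_r$ from Lemma~\ref{J} (which contribute factors smooth in $y$ alone) establishes the pseudo-homogeneity of every G\^ateaux derivative.

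Once both hypotheses are verified, Theorem~\ref{PDF} delivers $\mathscr{C}^\infty$-G\^ateaux differentiability. The explicit formula at $r=0$ is then immediate: since $J_0\equiv 1$ and $\frac{\partial J}{\partial r}[0,\xi]=\Div_\Gamma\xi$ by Lemma~\ref{J}, the product and chain rules give
$$\frac{\partial f}{\partial r}[0,\xi](y,x-y)\;=\;(\xi(x)-\xi(y))\cdot\nabla^z G(x-y)+G(x-y)\,\Div_\Gamma\xi(y),$$
which is exactly the asserted kernel.
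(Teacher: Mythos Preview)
Your proposal is correct and follows essentially the same approach as the paper: both verify the hypotheses of Theorem~\ref{PDF} by combining Lemma~\ref{J} with the chain rule for the composition with $G$, and both establish the pseudo-homogeneity of the derivatives via the key observation that each factor $\xi_i(x)-\xi_i(y)=\mathcal{O}(|z|)$ compensates exactly for the one order lost when differentiating $G$. The paper packages this compensation argument as a stand-alone Proposition on homogeneous kernels $G_m$, while you argue it directly via the Hadamard integral identity, but the substance is the same.
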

\begin{proof}  For all fixed $(x, y)\in(\Gamma\times\Gamma)^*$, consider the application 
$$f: U \mapsto f(r,x,y)=G(x+r(x)-y-r(y))J_{r}(y)\in\C.$$ 
We have to prove that $r\mapsto f(r)$ is $\mathscr{C}^{\infty}$-G\^ateaux differentiable and that each derivative define a pseudo-homogeneous kernel of class $-m$.

 $\rhd$\underline{Step 1:}\newline
 First of all we prove that for $(x,y)\in(\Gamma\times\Gamma)^*$ fixed the application $r\mapsto f(r,x,y)$ is infinitely G\^ateaux differentiable on $B^{\infty}_{\epsilon}$. By  lemma \ref{J} the application $r\mapsto J_{r}(y)$ is infinitely G\^ateaux differentiable on $B^{\infty}_{\epsilon}$, the application $r\mapsto x+r(x)$ is also infinitely G\^ateaux differentiable on $B^{\infty}_{\epsilon}$ and the kernel  $G$ is of class $\mathscr{C}^{\infty}$ on $\R^3\backslash\{0\}$. Being composed of applications infinitely G\^ateaux differentiable,  the application $r\mapsto f(r,x,y)$ is   too and using Leibniz  formula we have :
$$\begin{array}{l}\dfrac{\partial^k}{\partial r^k}f[r_{0},\xi_{1},\hdots,\xi_{k}](x,y)=\\\sum\limits_{\alpha=0}^k\sum\limits_{\sigma\in\mathcal{S}_{k}^+}\dfrac{\partial^\alpha}{\partial r^\alpha}\left\{G(x+r(x)-y-r(y))\right\}[r_{0},\xi_{\sigma(1)},\hdots,\xi_{\sigma(\alpha)}]\dfrac{\partial^{k-\alpha} J_{r}(y)}{\partial r^{k-\alpha}}[r_{0},\xi_{\sigma(\alpha+1)},\hdots,\xi_{\sigma(k)}]\end{array}$$ where $\mathcal{S}_{k}^+$ denote the non decreasing permutations of $\{1,\hdots,k\}$ and  $$\frac{\partial^\alpha}{\partial r^\alpha}\left\{G(x_{r}-y_{r})\right\}[r_{0};\xi_{1},\hdots,\xi_{\alpha}]=D^{\alpha}_{z}G[x+r_{0}(x)-y-r_{0}(y); \xi_{1}(x)-\xi_{1}(y),\hdots,\xi_{\alpha}(x)-\xi_{\alpha}(y)].$$

$\rhd$\underline{Step 2:}\newline
 We then prove that each derivative define a new  pseudo-homogeneous kernel of class $-m$  that is to say that for all $k\in\N$ and for all $k$-uple $(\xi_{1},\hdots,\xi_{k})$ the application
$$(x,y)\mapsto \frac{\partial^k}{\partial r^k}f[r_{0},\xi_{1},\hdots,\xi_{k}](x,y)$$ is pseudo-homogeneous of class $-m$.  Since  $\dfrac{\partial^{k-\alpha} J_{r}}{\partial r^{k-\alpha}}[r_{0},\xi_{1},\hdots,\xi_{k-\alpha}]\in\mathscr{C}^{\infty}(\Gamma,\R)$ we have to prove that  $$(x,y)\mapsto \frac{\partial^\alpha}{\partial r^\alpha}\left\{G(x+r(x)-y-r(y))\right\}[r_{0},\xi_{1},\hdots,\xi_{\alpha}]$$ defines a pseudo-homogeneous kernel of class  $-m$. 
 By definition, $G(z)$ admit the following asymptotic expansion when $z$ tends to zero:
$$G(z)=G_{m}(z)+\sum_{j=1}^{N-1}G_{m+j}(z)+G_{m+N}(h,z)$$ where $G_{m+j}$ is homogeneous of class $-(m+j)$ for $j=0,\hdots,N-1$ and $G_{m+N}$ is of arbitrary regularity. Using  Taylor formula we obtain the following result :
\begin{proposition} Let $G_{m}(z)$ be an homogeneous kernel of class $-m$ and any deformations $\xi=(\xi_{1},\hdots,\xi_{\alpha})\in\mathscr{C}^{\infty}_{b}(\R^3,\R^3)$. The function $$(x,y)\mapsto D^{\alpha}G_{m}[x+r_{0}(x)-y-r_{0}(y);\xi_{1}(x)-\xi_{1}(y),\hdots,\xi_{\alpha}(x)-\xi_{\alpha}(y)]$$ is pseudo-homogeneous of class $-m$.\end{proposition}
The application mapping $(\xi_{1},\hdots,\xi_{\alpha})\in\mathscr{C}^{\infty}_{b}(\R^3,\R^3)$ to the integral operator of kernel $$\dfrac{\partial^k\left\{G(x+r(x)-y-r(y))\right\}}{\partial r^m}[r_{0}, \xi_{1},\hdots,\xi_{k}]$$ is clearly linear and continuous for all $r_{0}\in B_{\epsilon}^{\infty}$.

\end{proof}

\begin{example}(\textbf{Single layer kernel})\label{Vk}  We  note $V_{\kappa}^{r}$ the integral operator defined for $u_{r}\in H^s(\Gamma_{r})$ by
$$V^{r}_{\kappa}u_{r}(x)=\int_{\Gamma_{r}}G(\kappa,|x_{r}-y_{r}|)u_{r}(y_{r})d\sigma(y_{r}).$$
The application
$$\begin{array}{ccl}B_{\delta}&\rightarrow &\mathscr{L}_{c}(H^{s}(\Gamma), H^{s+1}(\Gamma))\\r&\mapsto&\tau_{r}V^{r}_{\kappa}\tau_{r}^{-1}\end{array}$$ is $\mathscr{C}^{\infty}$-G\^ateaux differentiable and its  first derivative  at $r=0$  in the direction $\xi\in\mathscr{C}^{\infty}_{b}(\R^3,\R^3)$ is
\begin{equation}\frac{\partial \tau_{r} V^r_{\kappa}\tau_{r}^{-1}}{\partial r}[0,\xi]u(x)=\int_{\Gamma}k'(y,x-y)u(y)d\sigma(y)\end{equation}
where in $\R^3$ we have
\begin{equation*}\begin{split} k'(x,y)=&G(\kappa,|x-y|)\left(\frac{(\xi(x)-\xi(y))\cdot(x-y)}{|x-y|}\left(i\kappa-\frac{1}{|x-y|}\right)+\Div_{\Gamma}\xi(y)\right).\end{split}\end{equation*}  
\end{example}

\begin{example}(\textbf{Double layer kernel})\label{Dk}
 We note $D_{\kappa}^{r}$ the integral operator defined for $u_{r}\in H^s(\Gamma_{r})$ by
$$D^{r}_{\kappa}u_{r}(x)=\int_{\Gamma_{r}}\nn_{r}(x_{r})\cdot\nabla^zG(\kappa,|x_{r}-y_{r}|)u_{r}(y_{r})d\sigma(y_{r}).$$
The application
$$\begin{array}{ccl}B_{\delta}&\rightarrow &\mathscr{L}_{c}(H^{s}(\Gamma), H^{s+1}(\Gamma))\\r&\mapsto&\tau_{r}D^{r}_{\kappa}\tau_{r}^{-1}\end{array}$$ is $\mathscr{C}^{\infty}$ G\^ateaux-differentiable . 
\end{example}
\begin{proof} 
We have $$\nn_{r}(x_{r})\cdot\nabla^zG(\kappa,|x_{r}-y_{r}|)u_{r}(y_{r})=\nn_{r}(x_{r})\cdot(x_{r}-y_{r})\frac{G(\kappa,|x_{r}-y_{r}|)}{|x_{r}-y_{r}|}\left(i\kappa-\frac{1}{|x_{r}-y_{r}|}\right).$$ 
We have to prove that $$r\in B_{\epsilon}^{\infty}\mapsto\frac{(\tau_{r}\nn_{r})(x)\cdot(x+r(x)-y-r(y))}{|x+r(x)-y-r(y)|^3}$$ is $\mathscr{C}^{\infty}$ G\^ateaux differentiable and that the derivatives are pseudo-homogeneous of class $-1$. To do so we use local coordinates as Potthast did in \cite{Potthast1} and prove that 

$$\frac{\partial^k \;(\tau_{r}\nn_{r})(x)\cdot(x+r(x)-y-r(y))}{\partial r^k}[r_{0},\xi_{1},\hdots,\xi_{k}]$$ behaves as $|x-y|^2$ when $x-y$ tends to zero.
\end{proof}
\medskip

Each domain $\Omega$ is a countable union of compact subset of $\Omega$: $\Omega=\bigcup\limits_{p\ge1}K_{p}$.  Instead of studying the application
$$r\in B^{\infty}_{\epsilon}\mapsto \mathcal{F}_{\Omega}(r)\in\mathscr{L}_{c}\left(H^s(\Gamma_{r}),H^{s+m}(\Omega_{r})\right)$$ we  consider the  application
$$r\in B^{\infty}_{\epsilon}\mapsto \mathcal{F}_{\Omega}(r)\tau_{r}^{-1}\in\mathscr{L}_{c}\left(H^s(\Gamma),H^{s+m}(K_{p})\right).$$ We use this approach for potential operators. We have for $u\in H^{s-\frac{1}{2}}(\Gamma)$

\begin{equation*}\mathcal{P}_{r}\tau_{r}^{-1}(u)(x)=\int_{\Gamma}k_{r}(y+r(y),x-y-r(y))u(y)J_{r}(y)d\sigma(y),\;x\in K_{p}.\end{equation*}

\begin{theorem}\label{P'DF}Let $s\in\R$. Let $G(z)$ be a  pseudo-homogeneous kernel of class $-(m+1)$ with $m\in\N$. Assume that for all  $r\in B_{\epsilon}^{\infty}$, we have $k_{r}(y_{r},x-y_{r})=G(x-y_{r})$. Then the application
$$\begin{array}{ccl}B_{\epsilon}^{\infty}&\rightarrow&\mathscr{L}_{c}\left(H^{s-\frac{1}{2}}(\Gamma), \mathscr{C}^{\infty}(K_{p})\right)\\r&\mapsto&\mathcal{P}_{r}\tau_{r}^{-1}\end{array}$$ is infinitely G\^ateaux differentiable and
\begin{equation*}\frac{\partial^k \mathcal{P}_{r}\tau_{r}^{-1}}{\partial r^k}[r_{0},\xi_{1},\hdots,\xi_{k}]u(x)=\int_{\Gamma}\frac{\partial^k}{\partial r^k}\left\{G(x-y-r(y))J_{r}(y)\right\}[r_{0},\xi_{1},\hdots,\xi_{k}]u(y)d\sigma(y).\end{equation*} Its first derivative at $r=0$ in the  direction $\xi\in\mathscr{C}^{\infty}_{b}(\R^3,\R^3)$ is the integral operator  denoted by $\mathcal{P}^{1}$ with the kernel \begin{equation}\frac{\partial}{\partial r}\left\{G(x-y-r(y))\right\}[r_{0},\xi]=-\xi(y)\cdot\nabla^zG(x-y)+G(x-y)\Div_{\Gamma}\xi(y).\end{equation}The operator $\mathcal{P}^{(1)}$ can be extended in a linear and continuous integral  operator from $H^{s-\frac{1}{2}}(\Gamma)$ to $H^{s+m}(\Omega)$ and $H_{loc}^{s+m}(\Omega).$ \end{theorem}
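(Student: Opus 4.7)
The essential geometric fact is that each $K_p$ is a compact subset of $\Omega$ separated from $\Gamma$, so there exists $\delta_0 > 0$ with $|x - y - r(y)| \geq \delta_0$ uniformly for $x \in K_p$, $y \in \Gamma$ and $r \in B^\infty_\epsilon$, after shrinking $\epsilon$ if necessary. Thus the integrand $f(r, x, y) := G(x - y - r(y))\,J_r(y)$ never feels the singularity of $G$ at the origin, and since $G \in \mathscr{C}^\infty(\R^3 \setminus \{0\})$ the map $r \mapsto f(r, x, y)$ is $\mathscr{C}^\infty$-G\^ateaux differentiable on $B^\infty_\epsilon$ as a composition of the $\mathscr{C}^\infty$-G\^ateaux differentiable maps $r \mapsto r(y)$ (linear) and $r \mapsto J_r(y)$ (Lemma \ref{J}) with $G$. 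Leibniz and chain rules then produce a closed form for each $\frac{\partial^k f}{\partial r^k}[r_0; \xi_1, \ldots, \xi_k]$ whose $(x, y)$-dependence, together with every $x$-derivative, is smooth and uniformly bounded on $K_p \times \Gamma$.

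From this point on, I would mirror the Taylor-with-integral-remainder argument already used in the proof of Theorem \ref{PDF}. Combined with Lebesgue dominated convergence, applied after any prescribed finite number of $x$-derivatives, this lets me differentiate under the integral sign and pass to the limit $t \to 0$ in the difference quotient, yielding infinite G\^ateaux differentiability of $r \mapsto \mathcal{P}_r \tau_r^{-1}$ with values in $\mathscr{L}_c(H^{s-1/2}(\Gamma), \mathscr{C}^\infty(K_p))$, together with the claimed integral formula for all derivatives. Multilinearity, symmetry and continuity in $(\xi_1, \ldots, \xi_k)$ are inherited from the corresponding properties of the integrand. At $r_0 = 0$ we have $J_0 \equiv 1$ and $\frac{\partial J}{\partial r}[0, \xi] = \Div_\Gamma \xi$ (Lemma \ref{J}), while the chain rule gives $\frac{\partial}{\partial r}\{G(x - y - r(y))\}[0, \xi] = -\xi(y) \cdot \nabla^z G(x - y)$; the product rule then yields the announced first-derivative kernel of $\mathcal{P}^{(1)}$.

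The final step, which I expect to be the main conceptual point, is the Sobolev extension of $\mathcal{P}^{(1)}$ from a $\mathscr{C}^\infty(K_p)$-valued potential to a continuous operator $H^{s-1/2}(\Gamma) \to H^{s+m}(\Omega)$ and $H^{s+m}_{\loc}$ on the exterior. I would split the kernel as $k' = k'_1 + k'_2$ with $k'_1(x, y) = -\xi(y) \cdot \nabla^z G(x - y)$ and $k'_2(x, y) = G(x - y)\,\Div_\Gamma \xi(y)$. Since $G$ is pseudo-homogeneous of class $-(m+1)$, its gradient $\nabla^z G$ is pseudo-homogeneous of class $-m$, and multiplication by the smooth factor $\xi$ preserves this class, so $k'_1$ is pseudo-homogeneous of class $-m$. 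The kernel $k'_2$ remains of class $-(m+1)$ since $\Div_\Gamma \xi \in \mathscr{C}^\infty(\Gamma)$ acts as a smooth multiplier. Applying the potential-mapping theorem of Section \ref{BoundIntOp} to each piece then produces continuous potentials $H^{s-1/2}(\Gamma) \to H^{s+m}(\Omega)$ (and the local counterpart on $\Omega^c$); the $k'_2$ contribution even lands in $H^{s+m+1}$. Summing gives the asserted extension.
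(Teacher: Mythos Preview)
Your proposal is correct and follows essentially the same approach as the paper: smoothness on $K_p$ away from the singularity, differentiation under the integral via the Taylor-remainder device of Theorem~\ref{PDF}, and the Sobolev extension by splitting the first-derivative kernel into the $\nabla^z G$ piece (class $-m$) and the $G\,\Div_\Gamma\xi$ piece (class $-(m+1)$). Your write-up is in fact considerably more detailed than the paper's own proof, which is a three-line sketch stating only that the kernel is $\mathscr{C}^\infty$ on $K_p$ and that $\nabla G$ loses one order of pseudo-homogeneity.
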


\begin{proof} The kernel and its higher order derivatives are of class  $\mathscr{C}^{\infty}$ on $K_{p}$. \newline 
Since $\Omega$ is an increasing union of compact manifolds we can define a shape derivative on the whole domain $\Omega$.  Let us  look at the first derivative : the term $G(x-y)\Div_{\Gamma}\xi(y)$ has the same regularity than $G(x-y)$ when $x-y$ tends to zero wheareas  $\xi(y)\cdot\nabla G(x-y)$ loose one order of regularity.  As a consequence the kernel must be of class $-m+1$ in order that its first derivative acts from $H^{s-\frac{1}{2}}(\Gamma)$ to $H^{s+m}(\Omega)$ and $H^{s+m}_{loc}(\Omega^c)$. \end{proof}

\begin{remark} We conclude that the boundary integral operators are smooth with respect to the domain whereas the potential operators loose one order of regularity at each derivation. We point out that we do not need more than the first derivative of the deformations $\xi$ to compute the G\^ateaux derivatives of these integral operators.
\end{remark}

\begin{example}(\textbf{Single layer potential})\label{psi}
 We denote by $\psi_{\kappa}^{r}$ the single layer potential  defined for $u_{r}\in H^s(\Gamma_{r})$ by
$$\psi^{r}_{\kappa}u_{r}(x)=\int_{\Gamma_{r}}G(\kappa,|x-y_{r}|)u_{r}(y_{r})d\sigma(y_{r}),\;x\in\R^3\backslash\Gamma_{r}.$$
 The application
$$\begin{array}{ccl}B_{\epsilon}^{\infty}&\rightarrow &\mathscr{L}_{c}\left(H^{s}(\Gamma), \mathscr{C}^{\infty}(K_{p})\right)\\r&\mapsto&\tau_{r}\psi^{r}_{\kappa}\tau_{r}^{-1}\end{array}$$ is infinitely  G\^ateaux differentiable. Its first derivative at $r=0$ can be extended in a linear and continuous operator from   $H^{s-\frac{1}{2}}(\Gamma)$ to $H^s(\Omega)\cup H^s_{loc}(\Omega^c)$. 
\end{example}

Since the potential operators are infinitely G\^ateaux differentiable far from the boundary we have the following result by inverting the derivation with respect to $r$ and the passage to the limit $|x|\rightarrow\infty$.

\begin{example} Let $s\in\R$.
We denote by $\psi_{\kappa}^{\infty,r}$ the far field operator associated to the  single layer potential  defined for $u_{r}\in H^s(\Gamma_{r})$ by
$$\psi^{\infty,r}_{\kappa}u_{r}(\hat{x})=\int_{\Gamma_{r}}e^{-i\kappa\hat{x}\cdot y_{r}}u_{r}(y_{r})d\sigma(y_{r}),\;\hat{x}\in S^2.$$
  The application $$\begin{array}{lcl}B^{\infty}_{\epsilon}&\rightarrow&\mathscr{L}_{c}(H^{s}(\Gamma),\mathscr{C}^{\infty}(S^2))\\r&\mapsto&\Psi_{\kappa}^{\infty,r}\tau_{r}^{-1}\end{array}$$ is infinitely G\^ateaux differentiable and its first deriavtive at $r=0$ is defined  for $u\in H^{s}(\Gamma)$ by:
$$\frac{\partial \Psi_{{\kappa}}^{\infty,r}\tau_{r}^{-1} }{\partial r}[0,\xi]u(\hat{x})=\left(\int_{\Gamma}e^{-i\kappa\hat{x}\cdot y}\left(\Div_{\Gamma}\xi(y)-i\kappa\hat{x}\cdot\xi(y)\right)u(y)d\sigma(y)\right).$$ 
\end{example}

\section{Shape differentiability of the solution}  \label{ShapeSol}
 Let $\EE^{inc}$ be  an incident electric field which is a fixed data.  The aim of this section is to study the shape differentiation properties of the application $\EE$ mapping the bounded scatterer $\Omega$  to  the solution $\EE(\Omega)=\left(\EE^i(\Omega),\EE^s(\Omega)\right)\in\HH_{loc}(\Rot,\R^3)$ to the dielectric scattering problem by the obstacle $\Omega$ lit by  the incident field  $\EE^{inc}$ established in section \ref{ScatProb}.  To do so we use the integral representation of  the solution.
 
 We set $\mathscr{E}^{i}(r)=\EE^{i}(\Omega_{r})$ and $\mathscr{E}^s(r)=\EE^s(\Omega_{r})$ and we denote  $\Psi_{E_{\kappa}}^{r}$, $\Psi_{M_{\kappa}}^{r}$, $C_{0}^{*r}$, $C_{\kappa}^{r}$ and $M_{\kappa}^{r}$ the potential operators and the boundary integral operators on $\Gamma_{r}$ and $\gamma_{D}^r$, $\gamma_{N_{\kappa}}^r$, $\gamma_{D}^{c,r}$ the $\gamma_{N_{\kappa}}^{c,r}$ trace mappings on $\Gamma_{r}$. We have :
 
  \begin{equation}\label{risr1}\mathscr{E}^{tot}(r)=\EE^{inc}+\mathscr{E}^s(r)\end{equation}with
\begin{equation} \mathscr{E}^s(r)= \left(-\Psi_{E_{\kappa_{e}}}^{r}-i\eta\Psi_{M_{\kappa_{e}}}^{r}C_{0}^{*r}\right)\jj_{r}\qquad\text{ dans }\Omega_{r}^c=\R^3\backslash\overline{\Omega_{r}}\end{equation}  where $\jj_{r}$ solves the integral equation $$\SS^{r}\jj_{r}=-\rho\left(-\frac{1}{2}\Id+M_{\kappa_{i}}^{r}\right)\gamma_{D}^{r}\EE^{inc}-C_{\kappa_{i}}^{r}\gamma_{N_{\kappa_{e}}}^{r}\EE^{inc},$$
and
 \begin{equation}\label{risr2} \mathscr{E}^{i}(r) = -\frac{1}{\rho}\Psi_{E_{\kappa_{i}}}^{r}\gamma_{N_{\kappa_{e}}}^{c,r}\mathscr{E}^{tot}(r) - \Psi_{M_{\kappa_{i}}}^{r}\gamma_{D}^{c,r}\mathscr{E}^{tot}(r)\qquad\text{ dans }\Omega_{r}\end{equation}
 Recall that the operator  $\SS^{r}$  is composed of the operators  $C_{\kappa_{e}}^{r}$, $M_{\kappa_{e}}^{r}$, $C_{\kappa_{i}}^{r}$ et $M_{\kappa_{i}}^{r}$ and that these last ones are defined on the space $\TT\HH^{-\frac{1}{2}}(\Div_{\Gamma_{r}},\Gamma_{r})$.
 
\subsection{Variations of Helmholtz decomposition}
 We have to turn out many difficulties. On one hand,  to be able to construct shape derivatives of the solution it is necessary to prove that the derivatives are defined on the same spaces than the boundary
 integral operators themselves, that is  $\TT\HH^{-\frac{1}{2}}(\Div_{\Gamma},\Gamma)$ (if we derive at $r=0$). On the other hand, the very definition of the differentiability of operators defined on $\TT\HH\sp{-\frac{1}{2}}(\Div_{\Gamma},\Gamma)$ raises non-trivial questions. The first one is : {\bf How to derive applications defined on the variable space  $\TT\HH^{-\frac{1}{2}}(\Div_{\Gamma_{r}},\Gamma_{r})$?}\newline
A first  idea is to insert the identity  $\tau_{r}^{-1}\tau_{r}=\Id_{\HH^{-\frac{1}{2}}(\Gamma_{r})}$ between each operator in the integral  representation  of the  solution in order to consider integral operators on the fixed  boundary $\Gamma$ only and to study the differentiability of the applications
$$\begin{array}{ccl} r&\mapsto&\tau_{r}C_{\kappa}^{r}\tau_{r}^{-1},\\ r&\mapsto&\tau_{r}M_{\kappa}^{r}\tau_{r}^{-1},\\ r&\mapsto&\Psi_{E_{\kappa}}^{r}\tau_{r}^{-1},\\ r&\mapsto&\Psi_{M_{\kappa}}^{r}\tau_{r}^{-1}\end{array}$$ but many difficulties persist as Potthast pointed out \cite{Potthast3}.  The electromagnetic boundary integral operators are defined and bounded on tangential functions to  $\Gamma_{r}$.The restriction of the operator $\tau_{r}M_{\kappa}^r\tau_{r}^{-1}$ to tangential densities to $\Gamma_{r}$, has the same regularity of the double layer  potential operator. If we differentiate $\tau_{r}M_{\kappa}^r\tau_{r}^{-1}$, we will not obtain an  operator with the same  regularity than $M_{\kappa}$ and acting on $\TT\HH^{-\frac{1}{2}}(\Div_{\Gamma},\Gamma)$ since: $$\tau_{r}(\TT\HH^{-\frac{1}{2}}(\Div_{\Gamma_{r}},\Gamma_{r}))\not=\TT\HH^{-\frac{1}{2}}(\Div_{\Gamma},\Gamma).$$
 The incident field  $\EE^{inc}$ is analytic in the neighborhood of $\Gamma$ thus $\gamma_{D}^{r} \EE^{inc}\in \TT\HH^{-\frac{1}{2}}(\Div_{\Gamma_{r}},\Gamma_{r})$ for all $r\in B^{\infty}_{\epsilon}$.  Set $f(r)=\tau_{r}\left(\gamma_{D}^r\EE^{inc}\right)$. For $\xi\in\mathscr{C}^{\infty}_{b}(\R^3,\R^3)$, the G\^ateaux semiderivative $\dfrac{\partial f(t\xi)}{\partial t}_{|t=0}$  is not tangent to $ \Gamma$ anymore it follows that  $M_{\kappa}\dfrac{\partial f(t\xi)}{\partial t}_{|t=0}$ is not defined. We have  the same difficulties we the Neuman trace $\gamma_{N_{\kappa}}^r$ and the  other operators.

As an alternative, the idea of R. Potthast was to introduce projectors on the  tangent planes of the surfaces  $\Gamma$ and $\Gamma_{r}$. Let us note $\pi(r)$ the orthogonal projection of any functions defined on $\Gamma_{r}$ onto the tangent plane to $\Gamma$. This is a linear and continuous operator from the  continous vector function space on $\Gamma_{r}$ to the the space of continuous tangential function  to $\Gamma$ and for $\uu_{r}\in\left(\mathscr{C}(\Gamma_{r})\right)^3$ we have $$(\pi(r)\uu_{r})(x)=\uu_{r}(x+r(x))-\left(\nn(x)\cdot \uu_{r}(x+r(x))\right)\nn(x).$$ 

\begin{proposition} The restriction of $\pi(r)$ to the continuous and tangential functions to $\Gamma_{r}$ admit an inverse, denoted by $\pi^{-1}(r)$. The application $\pi^{-1}(r)$ is defined for a tangential function $\uu$ to $\Gamma$ by 
$$(\pi^{-1}(r)\uu)(x+r(x))=\uu(x)-\nn(x)\frac{\nn_{r}(x+r(x))\cdot \uu(x)}{\nn_{r}(x+r(x))\cdot \nn(x)}.$$ 
And we have  $\pi^{-1}(r)\uu\in \TT\HH^s(\Gamma_{r})$ if and only if  $\uu\in \TT\HH^s(\Gamma)$.
\end{proposition}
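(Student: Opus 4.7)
The plan is to derive the explicit formula by solving the two characterizing conditions and then verify the Sobolev regularity equivalence via the transport isomorphism $\tau_r$.

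First I would set up the inverse by brute characterization. Given a tangential field $\uu$ on $\Gamma$, I look for a tangential field $\uu_r$ on $\Gamma_r$ satisfying $\pi(r)\uu_r = \uu$. Writing $\vv(x) = \uu_r(x+r(x)) = (\tau_r \uu_r)(x)$, the projection equation reads $\vv(x) - (\nn(x)\cdot \vv(x))\nn(x) = \uu(x)$, which forces $\vv(x) = \uu(x) + \alpha(x)\nn(x)$ for some scalar $\alpha$. The remaining condition $\uu_r \cdot \nn_r = 0$ on $\Gamma_r$ becomes $\vv(x) \cdot (\tau_r\nn_r)(x) = 0$ on $\Gamma$, which uniquely determines
\[
\alpha(x) = -\frac{\uu(x)\cdot(\tau_r\nn_r)(x)}{\nn(x)\cdot(\tau_r\nn_r)(x)},
\]
giving exactly the stated formula after applying $\tau_r^{-1}$.

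Second, I would justify that the formula is well defined and that $\pi^{-1}(r)$ is a genuine two-sided inverse. At $r=0$ the denominator $\nn\cdot(\tau_r\nn_r)$ equals $1$, and by Lemma \ref{N} the map $r\mapsto \tau_r\nn_r$ is continuous (indeed smooth) from $B^\infty_\epsilon$ into $\mathscr{C}^\infty(\Gamma,\R^3)$, so after shrinking $\epsilon$ if necessary we may assume $\nn\cdot(\tau_r\nn_r)\ge 1/2$ uniformly on $\Gamma$. Then the explicit formulas for $\pi(r)$ and the candidate $\pi^{-1}(r)$ compose to the identity in either order by direct substitution, using orthogonal-decomposition uniqueness of the splitting of any vector into $\nn$-component and tangential-to-$\Gamma$ component.

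Third, for the regularity equivalence I would compose with $\tau_r$. Since $(\Id+r)$ is a $\mathscr{C}^\infty$-diffeomorphism from $\Gamma$ onto $\Gamma_r$, $\tau_r:\HH^s(\Gamma_r)\to\HH^s(\Gamma)$ is a topological isomorphism, and tangentiality is an algebraic pointwise condition preserved by $\tau_r$ (with $\nn_r$ replaced by $\tau_r\nn_r$). The identity
\[
\tau_r\bigl(\pi^{-1}(r)\uu\bigr)(x) = \uu(x) - \frac{(\tau_r\nn_r)(x)\cdot\uu(x)}{\nn(x)\cdot(\tau_r\nn_r)(x)}\,\nn(x)
\]
exhibits $\tau_r\pi^{-1}(r)\uu$ as the sum of $\uu$ and a smooth scalar multiple of the $\mathscr{C}^\infty$ vector field $\nn$, the scalar factor being the product of $\uu\cdot(\tau_r\nn_r)$ and $1/\bigl(\nn\cdot(\tau_r\nn_r)\bigr)$, both of which act as $H^s$-bounded multiplications. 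Thus $\uu\in\TT\HH^s(\Gamma)\Rightarrow \pi^{-1}(r)\uu\in\TT\HH^s(\Gamma_r)$, and the reverse implication follows from applying the analogous argument to $\pi(r)$ itself.

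The only mild obstacle is ensuring the denominator stays bounded away from zero uniformly on $\Gamma$; this is handled once for all by shrinking $\epsilon$ using the continuity statement in Lemma \ref{N}. Everything else is linear algebra in the ambient $\R^3$ combined with the smoothness of $\nn$ and $\tau_r\nn_r$.
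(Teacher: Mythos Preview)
The paper states this proposition without proof, so there is nothing to compare against directly. Your argument is correct and is precisely the natural verification one would expect: the two conditions $\pi(r)\uu_r=\uu$ and $\uu_r\cdot\nn_r=0$ translate, after pulling back by $\tau_r$, into $\vv=\uu+\alpha\nn$ and $\vv\cdot(\tau_r\nn_r)=0$, and solving for $\alpha$ gives the stated formula. Your handling of the only genuine issue---nonvanishing of the denominator $\nn\cdot(\tau_r\nn_r)$---via the continuity of $r\mapsto\tau_r\nn_r$ (Lemma~\ref{N}) and a possible shrinking of $\epsilon$ is exactly right, and the Sobolev equivalence follows as you say from the fact that $\tau_r$ is an $H^s$-isomorphism and the correction term is a $\mathscr{C}^\infty$ multiplier applied to $\uu$.
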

 In the framework of the space of tangential continuous functions it  suffices to insert
the product $\pi^{-1}(r)\pi(r)=\Id_{\TT\mathscr{C}^0(\Gamma_{r})}$ in the integral representation of the  solution to lead us  to study boundary integral operators defined on $\TT\mathscr{C}^0(\Gamma)$ which do not  depend  on $r$ anymore but here  we would obtain operators defined on $$\pi(r)\left(\TT\HH^{-\frac{1}{2}}(\Div_{\Gamma_{r}},\Gamma_{r})\right)=\left\{u\in \TT\HH^{-\frac{1}{2}}(\Gamma), \Div_{\Gamma_{r}}(\pi^{-1}(r)\uu)\in H^{-\frac{1}{2}}(\Gamma_{r})\right\}.$$
This space depends again on the variable $r$ and do not correspond to $\TT\HH^{-\frac{1}{2}}(\Div_{\Gamma},\Gamma)$.
Our approach consist in using the Helmholtz decomposition of the spaces $\TT\HH^{-\frac{1}{2}}(\Div_{\Gamma_{r}},\Gamma_{r})$ for $r\in B^{\infty}_{\epsilon}$ and to  introduce a new   invertible operator $\Pp_{r}$ defined on $\TT\HH^{-\frac{1}{2}}(\Div_{\Gamma_{r}},\Gamma_{r})$ and which is not a projection operator. 
 \bigskip
 
 We have the following decomposition. We refer to \cite{delaBourdonnaye} for the proof.
 \begin{theorem} The Hilbert space $\TT\HH^{-\frac{1}{2}}(\Div_{\Gamma},\Gamma)$ admit the following Helmholtz decomposition:\begin{equation} \TT\HH^{-\frac{1}{2}}(\Div_{\Gamma},\Gamma)= \nabla_{\Gamma}\left(H^{\frac{3}{2}}(\Gamma)\slash\R \right)\bigoplus {\Rot}_{\Gamma}\left(H^{\frac{1}{2}}(\Gamma)\slash\R\right).\end{equation}
 \end{theorem}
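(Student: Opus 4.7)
The plan is to construct the decomposition explicitly by solving two Laplace--Beltrami equations via Lemma \ref{LapBel}, and to use the topological triviality of $\Gamma$ (diffeomorphic to a sphere) to close the argument.

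Given $\jj \in \TT\HH^{-\frac{1}{2}}(\Div_\Gamma,\Gamma)$, I would first extract the gradient part. By surface integration by parts against the constant $1 \in H^{1/2}(\Gamma)$, the distribution $\Div_\Gamma \jj$ has zero mean and so belongs to $H^{-1/2}_{*}(\Gamma)$. Lemma \ref{LapBel} with $t=-1/2$ then delivers a unique $p \in H^{3/2}(\Gamma)/\R$ with $\Delta_\Gamma p = \Div_\Gamma \jj$; and since $\Delta_\Gamma = \Div_\Gamma \nabla_\Gamma$, the remainder $\jj_0 := \jj - \nabla_\Gamma p \in \TT\HH^{-1/2}(\Gamma)$ is divergence-free. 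Next, its scalar curl $\rot_\Gamma \jj_0 \in H^{-3/2}(\Gamma)$ also has zero mean (by duality, using $\Rot_\Gamma 1 = 0$), so Lemma \ref{LapBel} with $t=-3/2$ applied to $-\Delta_\Gamma q = \rot_\Gamma \jj_0$ produces a unique $q \in H^{1/2}(\Gamma)/\R$. By the identity $-\Delta_\Gamma = \rot_\Gamma \Rot_\Gamma$, the residual $\uu := \jj_0 - \Rot_\Gamma q \in \TT\HH^{-1/2}(\Gamma)$ satisfies $\Div_\Gamma \uu = 0$ and $\rot_\Gamma \uu = 0$.

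The topological step closes the existence part: since $\Gamma$ is diffeomorphic to the sphere, $H^1_{\mathrm{dR}}(\Gamma) = 0$, so $\rot_\Gamma \uu = 0$ forces $\uu = \nabla_\Gamma \phi$ for some $\phi \in H^{1/2}(\Gamma)/\R$; the divergence-free condition then yields $\Delta_\Gamma \phi = 0$, whence $\phi$ is constant by Lemma \ref{LapBel} and $\uu = 0$. This gives $\jj = \nabla_\Gamma p + \Rot_\Gamma q$. Directness of the sum is straightforward: if $\nabla_\Gamma p = \Rot_\Gamma q$, applying $\Div_\Gamma$ gives $\Delta_\Gamma p = 0$, whence $p$ is constant, and then $\Rot_\Gamma q = 0$ together with $\rot_\Gamma \Rot_\Gamma = -\Delta_\Gamma$ gives $\Delta_\Gamma q = 0$, whence $q$ is constant.

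\textbf{Main obstacle.} The delicate point is justifying "$\rot_\Gamma$-free implies surface gradient" at the low regularity $\TT\HH^{-1/2}$. This rests on $H^1_{\mathrm{dR}}(\Gamma) = 0$ together with a density/regularization argument lifting the smooth de Rham statement to distributional vector fields. Everything else is essentially bookkeeping: the asymmetric Sobolev exponents $3/2$ and $1/2$ in the statement reflect exactly the one-derivative gap between $\Div_\Gamma$ and the underlying $\TT\HH^{-1/2}$ norm in the definition of $\TT\HH^{-1/2}(\Div_\Gamma,\Gamma)$, and are pinned down by applying Lemma \ref{LapBel} at the two values $t=-1/2$ and $t=-3/2$.
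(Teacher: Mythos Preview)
Your argument is correct and essentially the standard one. The paper itself does not prove this theorem: it simply writes ``We refer to \cite{delaBourdonnaye} for the proof'' and moves on. Your construction via two applications of Lemma~\ref{LapBel} (at $t=-1/2$ for $p$ and $t=-3/2$ for $q$) is exactly how such decompositions are obtained, and your identification of the exponents $3/2$ and $1/2$ is right.

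One small remark on the point you flag as the main obstacle. You can sidestep the low-regularity de Rham statement entirely: once you have $\uu\in\TT\HH^{-1/2}(\Gamma)$ with $\Div_\Gamma\uu=0$ and $\rot_\Gamma\uu=0$, pair $\uu$ against an arbitrary smooth tangential test field $\vv$. On a smooth closed simply connected surface the smooth Hodge decomposition gives $\vv=\nabla_\Gamma a+\Rot_\Gamma b$ (no harmonic part since $b_1(\Gamma)=0$), and then $\langle\uu,\vv\rangle_\Gamma=-\langle\Div_\Gamma\uu,a\rangle_\Gamma-\langle\rot_\Gamma\uu,b\rangle_\Gamma=0$, so $\uu=0$. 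This avoids having to lift ``curl-free implies gradient'' to $\HH^{-1/2}$; equivalently, one observes that $\uu$ lies in the kernel of the (elliptic) Hodge Laplacian on 1-forms, hence is smooth, hence vanishes by the classical Hodge theorem. Either way, the simple connectedness of $\Gamma$ is indeed the crux, and your proof is complete.
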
 
   Since the  real $\epsilon$ is chosen such that for all $r\in B^{\infty}_{\epsilon}$ the surfaces  $\Gamma_{r}$ are still regular and simply connected, then the spaces $\TT\HH^{-\frac{1}{2}}(\Div_{\Gamma_{r}},\Gamma_{r})$ admit the same decomposition.\newline
 Let $\jj_{r}\in \TT\HH^{-\frac{1}{2}}(\Div_{\Gamma_{r}},\Gamma_{r})$ and let $\nabla_{\Gamma_{r}}\;p_{r}+\Rot_{\Gamma_{r}}\;q_{r}$ its Helmholtz decomposition. Since $p_{r}\in H^{\frac{3}{2}}(\Gamma_{r})$ and $q_{r}\in H^{\frac{1}{2}}(\Gamma_{r})$, their change of variables from $\Gamma_{r}$ to $\Gamma$, $\tau_{r}(p_{r})$ and $\tau_{r}(q_{r})$, are in  $H^{\frac{3}{2}}(\Gamma)$ and $H^{\frac{1}{2}}(\Gamma)$ respectively. The following operator :   
 
 \begin{equation}\begin{array}{llcl} \Pp_{r}:&\TT\HH^{-\frac{1}{2}}(\Div_{\Gamma_{r}},\Gamma_{r})&\longrightarrow& \TT\HH^{-\frac{1}{2}}(\Div_{\Gamma},\Gamma)\\&\jj_{r}=\nabla_{\Gamma_{r}}\;p_{r}+\mathbf{rot}_{\Gamma_{r}}\;q_{r}&\mapsto&\jj=\nabla_{\Gamma}\;\tau_{r}p_{r}+\mathbf{rot}_{\Gamma}\;\tau_{r}q_{r}\end{array}\end{equation}is well-defined.
 
 The operator $\Pp_{r}$ transforms a tangential vector field  $\jj_{r}$ to $\Gamma_{r}$ in a tangential vector field $\jj$ to $\Gamma$. This operator is linear, continuous and admit an inverse $\Pp_{r}^{-1}$  given by :
 
 \begin{equation} \begin{array}{llcl}\Pp_{r}^{-1}:&\TT\HH^{-\frac{1}{2}}(\Div_{\Gamma},\Gamma)&\longrightarrow &\TT\HH^{-\frac{1}{2}}(\Div_{\Gamma_{r}},\Gamma_{r})\\&\jj=\nabla_{\Gamma}\;p+\mathbf{rot}_{\Gamma}\;q&\mapsto&\jj_{r}= \nabla_{\Gamma_{r}}\;\tau_{r}^{-1}(p)+\mathbf{rot}_{\Gamma_{r}}\;\tau_{r}^{-1}(q).\end{array}\end{equation}
 Obviously we have when $r=0$ that $\Pp_{r}=\Pp_{r}^{-1}=\Id_{\TT\HH^{-\frac{1}{2}}(\Div_{\Gamma},\Gamma)}$. We insert the  identity $\Id_{\TT\HH^{-\frac{1}{2}}(\Div_{\Gamma_{r}},\Gamma_{r})}=\Pp_{r}^{-1}\Pp_{r}$ between each operator in the integral  representation  of the  solution  $(\mathscr{E}^{i}(r),\mathscr{E}^s(r))$. 
 Finaly we have to study the G\^ateaux differentiality properties of the following applications :
 
 \begin{equation}\label{reformulation}\begin{array}{lclll}
 \;  B^{\infty}_{\epsilon} &\rightarrow& \mathscr{L}_{c}(\TT\HH^{-\frac{1}{2}}(\Div_{\Gamma},\Gamma), \HH(\Rot, K_{p})) & : & r\mapsto \Psi^{r}_{E_{\kappa}}\Pp_{r}^{-1} \\ 
 \; B^{\infty}_{\epsilon}& \rightarrow& \mathscr{L}_{c}(\TT\HH^{-\frac{1}{2}}(\Div_{\Gamma},\Gamma), \HH(\Rot,K_{p})) & : & r\mapsto \Psi^{r}_{M_{\kappa}}\Pp_{r}^{-1}\\
 \;   B^{\infty}_{\epsilon}& \rightarrow& \mathscr{L}_{c}(\TT\HH^{s}(\Div_{\Gamma},\Gamma), \TT\HH^{-\frac{1}{2}}(\Div_{\Gamma},\Gamma))& : & r\mapsto \Pp_{r}M^{r}_{\kappa}\Pp_{r}^{-1}\\
  \;B^{\infty}_{\epsilon}&\rightarrow& \mathscr{L}_{c}(\TT\HH^{-\frac{1}{2}}(\Div_{\Gamma},\Gamma), \TT\HH^{-\frac{1}{2}}(\Div_{\Gamma},\Gamma))& : & r\mapsto \Pp_{r}C^{r}_{\kappa}\Pp_{r}^{-1}\\\end{array}\end{equation}
  where $K_{p}$ is a compact subset of $\R^3\backslash\Gamma$.
  \bigskip
  
  Now let us look at the  integral representation of these  operators .

$\rhd$\textbf{ Integral representation of $\Psi^{r}_{E_{\kappa}}\Pp_{r}^{-1}$ .}\newline
 The operator $\Psi^{r}_{E_{\kappa}}\mathbf{P_{r}}^{-1}$  is defined for $\jj=\nabla_{\Gamma}\;p+\mathbf{\Rot}_{\Gamma}\;q\in \TT\HH^{-\frac{1}{2}}(\Div_{\Gamma},\Gamma)$ and $x\in K_{p}$ by:  
\begin{equation*}\label{PE}\begin{split}\Psi^{r}_{E_{\kappa}}\Pp_{r}^{-1}\jj(x) =&\;\kappa\displaystyle{\int_{\Gamma_{r}}G(\kappa,|x-y_{r}|)\left(\nabla_{\Gamma_{r}}\tau_{r}^{-1}p\right)(y_{r})d\sigma(y_{r})}\\&\;+\kappa\displaystyle{\int_{\Gamma_{r}}G(\kappa,|x-y_{r}|)\left(\mathbf{rot}_{\Gamma_{r}} \tau_{r}^{-1}q\right)(y_{r})d\sigma(y_{r})} \\ &\;- \kappa^{-1}\nabla\displaystyle{\int_{\Gamma_{r}}G(\kappa,|x-y_{r}|)\left(\Delta_{\Gamma_{r}}\tau_{r}^{-1} p\right)(y_{r})d\sigma(y_{r})}.\end{split}\end{equation*}
\medskip

$\rhd$\textbf{ Integral representation of  $\Psi^{r}_{M_{\kappa}}\Pp_{r}^{-1}$.}\newline
 The operator $\Psi^{r}_{M_{\kappa}}\mathbf{P_{r}}^{-1}$  is defined for $\jj=\nabla_{\Gamma}\;p+\mathbf{\Rot}_{\Gamma}\;q\in \TT\HH^{-\frac{1}{2}}(\Div_{\Gamma},\Gamma)$ and $x\in K_{p}$ by:   
\begin{equation*}\label{PM}\begin{split}\Psi^{r}_{M_{\kappa}}\Pp_{r}^{-1}\jj(x)=&\;\Rot\displaystyle{\int_{\Gamma_{r}}G(\kappa,|x-y_{r}|)\left(\nabla_{\Gamma_{r}}\tau_{r}^{-1}p\right)(y_{r})d\sigma(y_{r})}\\&+\Rot\displaystyle{\int_{\Gamma_{r}}G(\kappa,|x-y_{r}|)\left(\mathbf{\Rot}_{\Gamma_{r}}\tau_{r}^{-1} q\right)(y_{r})(y_{r})d\sigma(y_{r})}.\end{split}\end{equation*}
\medskip

$\rhd$\textbf{ Integral representation of $\Pp_{r}C^{r}_{\kappa}\Pp_{r}^{-1}$ .}\newline Recall that for $\jj_{r}\in\TT\HH^{-\frac{1}{2}}(\Div_{\Gamma_{r}},\Gamma_{r})$, the operator $C_{\kappa}^r$ is defined by
\begin{equation*}\begin{array}{ll}C^{r}_{\kappa}\jj_{r}(x_{r})=&-\kappa\,\nn_{r}(x_{r})\times\displaystyle{\int_{\Gamma_{r}}G(\kappa,|x_{r}-y_{r}|)\jj_{r}(y_{r})d\sigma(y_{r})}\vspace{2mm}\\&-\kappa^{-1}\nn_{r}(x_{r})\times\nabla_{\Gamma_{r}}^{x_{r}}\displaystyle{\int_{\Gamma_{r}}G(\kappa,|x_{r}-y_{r}|)\Div_{\Gamma_{r}}\jj_{r}(y_{r})d\sigma(y_{r})}.\end{array}\end{equation*} We want to write  $\operatorname{C^{r}_{\kappa}}\jj_{r}$ of the form $\nabla_{\Gamma_{r}}P_{r}+\Rot_{\Gamma_{r}}Q_{r}$. 
Using the formula \eqref{eqd2}-\eqref{eqd3} we deduce that :
$$ \Div_{\Gamma_{r}}\operatorname{C^{r}_{\kappa}}\jj_{r}=\Delta_{\Gamma_{r}} P_{r}\;\text{ et } \rot_{\Gamma_{r}}\operatorname{C^{r}_{\kappa}}\jj_{r}=-\Delta_{\Gamma_{r}} Q_{r}.$$
 As a consequence we have for $x_{r}\in\Gamma_{r}$:
\begin{equation}\begin{array}{ll}P_{r}(x_{r})=&-\kappa\;\Delta_{\Gamma_{r}}^{-1}\Div_{\Gamma_{r}}\left(\nn_{r}(x_{r})\times\displaystyle{\int_{\Gamma_{r}}G(\kappa,|x_{r}-y_{r}|)\jj_{r}(y_{r})d\sigma(y_{r})}\right)\end{array}\end{equation}and
\begin{equation*}\begin{array}{rl}Q_{r}(x_{r})=&-\kappa\;(-\Delta_{\Gamma_{r}}^{-1})\rot_{\Gamma_{r}}\left(\nn_{r}(x_{r})\times\displaystyle{\int_{\Gamma_{r}}G(\kappa,|x_{r}-y_{r}|)\jj_{r}(y_{r})d\sigma(y_{r})}\right)\vspace{2mm}\\&-\kappa^{-1}(-\Delta_{\Gamma_{r}})\rot_{\Gamma_{r}}(-\Rot_{\Gamma_{r}})\displaystyle{\int_{\Gamma_{r}}G(\kappa,|x_{r}-y_{r}|)\Div_{\Gamma_{r}}\jj_{r}(y_{r})d\sigma(y_{r})},\\=&\kappa\;\Delta_{\Gamma_{r}}^{-1}\rot_{\Gamma_{r}}\left(\nn_{r}(x_{r})\times\displaystyle{\int_{\Gamma_{r}}G(\kappa,|x_{r}-y_{r}|)\jj_{r}(y_{r})d\sigma(y_{r})}\right)\vspace{2mm}\\&+\kappa^{-1}\displaystyle{\int_{\Gamma_{r}}G(\kappa,|x_{r}-y_{r}|)\Div_{\Gamma_{r}}\jj_{r}(y_{r})d\sigma(y_{r})}.\end{array}\end{equation*}
 The operator $\Pp_{r}C^{r}_{\kappa}\Pp_{r}^{-1}$ is defined for $\jj=\nabla_{\Gamma}\;p+\mathbf{\Rot}_{\Gamma}\;q\in \TT\HH^{-\frac{1}{2}}(\Div_{\Gamma},\Gamma)$ by: 
$$\Pp_{r}C^{r}_{\kappa}\Pp_{r}^{-1}=\nabla_{\Gamma}P(r)+\Rot_{\Gamma}Q(r),$$ with

\begin{equation*}\begin{array}{ll}P(r)(x)=&-\kappa\;\left(\tau_{r}\Delta_{\Gamma_{r}}^{-1}\Div_{\Gamma_{r}}\tau_{r}^{-1}\right)\left((\tau_{r}\nn_{r})(x)\times\tau_{r}\left\{\displaystyle{\int_{\Gamma_{r}}G(\kappa,|\cdot-y_{r}|)(\nabla_{\Gamma_{r}}\tau_{r}^{-1}p)(y_{r})d\sigma(y_{r})}\right.\right.\vspace{2mm}\\&\hspace{5.3cm}\left.\left.+\displaystyle{\int_{\Gamma_{r}}G(\kappa,|\cdot-y_{r}|)(\mathbf{rot}_{\Gamma_{r}}\tau_{r}^{-1}q)(y_{r})d\sigma(y_{r})}\right\}(x)\right)\end{array}\end{equation*}and

\begin{equation*}\begin{array}{ll}Q(r)(x)=&\kappa\;\left(\tau_{r}\Delta_{\Gamma_{r}}^{-1}\rot_{\Gamma_{r}}\tau_{r}^{-1}\right)\left((\tau_{r}\nn_{r})(x)\times\tau_{r}\left\{\displaystyle{\int_{\Gamma_{r}}G(\kappa,|\cdot-y_{r}|)(\nabla_{\Gamma_{r}}\tau_{r}^{-1}p)(y_{r})d\sigma(y_{r})}\right.\right.\vspace{2mm}\\&\hspace{5.3cm}\left.\left.+\displaystyle{\int_{\Gamma_{r}}G(\kappa,|\cdot-y_{r}|)(\mathbf{rot}_{\Gamma_{r}}\tau_{r}^{-1}q)(y_{r})d\sigma(y_{r})}\right\}(x)\right)\vspace{2mm}\\&+\kappa^{-1}\tau_{r}\left(\displaystyle{\int_{\Gamma_{r}}G(\kappa,|\cdot-y_{r}|)(\Delta_{\Gamma_{r}}\tau_{r}^{-1}p)(y_{r})d\sigma(y_{r})}\right)(x).\end{array}\end{equation*}

$\rhd$\textbf{ Integral representation of $\Pp_{r}M^{r}_{\kappa}\Pp_{r}^{-1}$ .}\newline 
 Recall that for all $\jj_{r}\in\TT\HH^{-\frac{1}{2}}(\Div_{\Gamma_{r}},\Gamma_{r})$, the operator $M_{\kappa}^r$ is defined by
\begin{equation*}\begin{array}{ll}M^{r}_{\kappa}\jj_{r}(x_{r})=&\displaystyle{\int_{\Gamma_{r}}\left((\nabla^{x_{r}}G(\kappa,|x_{r}-y_{r}|))\cdot\nn_{r}(x_{r})\right)\jj_{r}(y_{r})d\sigma(y_{r})}\vspace{2mm}\\&-\displaystyle{\int_{\Gamma_{r}}\nabla^{x_{r}}G(\kappa,|x_{r}-y_{r}|)\left(\nn_{r}(x_{r})\cdot\jj_{r}(y_{r})\right) d\sigma(y_{r})} .\vspace{2mm}\end{array}\end{equation*}
Using the equalities \eqref{eqd3} and the identity $\Rot\Rot=-\Delta+\nabla\Div$, we have
$$\begin{array}{rl} \Div_{\Gamma_{r}}M^r_{\kappa}\jj_{r}(x_{r}) 
=&\nn_{r}(x_{r})\cdot\displaystyle{\int_{\Gamma_{r}}\rot\rot^{x_{r}} \left(G(\kappa,|x_{r}-y_{r}|)\jj_{r}(y_{r}) \right) d\sigma(y_{r})}\\  =&\kappa^2\nn_{r}(x_{r})\cdot\displaystyle{\int_{\Gamma_{r}}\left(G(\kappa,|x_{r}-y_{r}|)\jj_{r}(y_{r}) \right)d\sigma(y_{r})} \vspace{2mm}\\ & +
\dfrac{\partial}{\partial\nn_{r}}\displaystyle{\int_{\Gamma_{r}} \left(G(\kappa,|x_{r}-y_{r}|)\Div_{\Gamma_{r}}\jj_{r}(y_{r}) \right)d\sigma(y_{r})}
\end{array}$$
 Proceeding by the same way than with the operator $\Pp_{r}C^{r}_{\kappa}\Pp_{r}^{-1}$, we obtain that the operator $\Pp_{r}M^{r}_{\kappa}\Pp_{r}^{-1}$ is defined for  $\jj=\nabla_{\Gamma}\;p+\mathbf{\Rot}_{\Gamma}\;q\in \TT\HH^{-\frac{1}{2}}(\Div_{\Gamma},\Gamma)$ by: 
 $$\Pp_{r}M^{r}_{\kappa}\Pp_{r}^{-1}\jj=\nabla_{\Gamma}P'(r)+\mathbf{rot}_{\Gamma}Q'(r),$$ with

\begin{equation*}\begin{array}{ll}P'(r)(x)=&\left(\tau_{r}\Delta_{\Gamma_{r}}^{-1}\tau_{r}^{-1}\right)\tau_{r}\left\{\kappa^2\displaystyle{\int_{\Gamma_{r}}\nn_{r}(\,\cdot\,)\cdot \left\{G(\kappa,|\cdot-y_{r}|)\Rot_{\Gamma_{r}}\tau_{r}^{-1}q(y_{r})\right\}d\sigma(y_{r})} \right.\vspace{2mm}\\&\qquad\qquad\quad\qquad+\,\kappa^{2}\displaystyle{\int_{\Gamma_{r}}\nn_{r}(\,\cdot\,)\cdot \left\{G(\kappa,|\cdot-y_{r}|)\nabla_{\Gamma_{r}}\tau_{r}^{-1}p(y_{r})\right\}d\sigma(y_{r})}\vspace{2mm}\\&\qquad\qquad\qquad+\left.\displaystyle{\int_{\Gamma_{r}}\dfrac{\partial}{\partial\nn_{r}(\,\cdot\,)}G(\kappa,|\cdot-y_{r}|)(\Delta_{\Gamma_{r}}\tau_{r}^{-1}p)(y_{r})d\sigma(y_{r})}\right\}(x),\end{array}\end{equation*}and

\begin{equation*}\begin{array}{ll}Q_{r}'(x)=&\left(\tau_{r}\Delta_{\Gamma_{r}}^{-1}\rot_{\Gamma_{r}}\tau_{r}^{-1}\right)\tau_{r}\left\{\displaystyle{\int_{\Gamma_{r}}\left(\nabla G(\kappa,|\cdot-y_{r}|)\cdot\nn_{r}(\,\cdot\,)\right)(\mathbf{rot}_{\Gamma_{r}}\tau_{r}^{-1}q)(y_{r})d\sigma(y_{r})} \right.\vspace{2mm}\\&\qquad\qquad\qquad+\displaystyle{\int_{\Gamma_{r}}(\left( \nabla G(\kappa,|\cdot-y_{r}|)\cdot\nn_{r}(\,\cdot\,)\right)(\nabla_{\Gamma_{r}}\tau_{r}^{-1} p)(y_{r})d\sigma(y_{r})}\vspace{2mm}\\&\qquad\qquad- \displaystyle{\int_{\Gamma_{r}}\nabla G(\kappa,|\cdot-y_{r}|)\left(\nn_{r}(\,\cdot\,)\cdot(\mathbf{rot}_{\Gamma_{r}}\tau_{r}^{-1} q)(y_{r})\right) d\sigma(y_{r})}\vspace{2mm}\\&\quad\quad-\left.\displaystyle{\int_{\Gamma_{r}}\nabla G(\kappa,|\cdot-y_{r}|)\left(\nn_{r}(\,\cdot\,)\cdot(\nabla_{\Gamma_{r}}\tau_{r}^{-1}p)(y_{r})\right) d\sigma(y_{r})}\right\}(x).\end{array}\end{equation*}
 These operators are composed of boundary integral operators with weakly singular and pseudo-homogeneous kernels of class -1 and of the surface differential operators defined in section \ref{BoundIntOp}. By  a change of variables in the integral, we then have to study the differentiability properties of the applications    $$\begin{array}{lcl}r&\mapsto&\tau_{r}\nabla_{\Gamma_{r}}\tau_{r}^{-1}\\r&\mapsto&\tau_{r}\Rot_{\Gamma_{r}}\tau_{r}^{-1}\\r&\mapsto&\tau_{r}\Div_{\Gamma_{r}}\tau_{r}^{-1}\\r&\mapsto&\tau_{r}\rot_{\Gamma_{r}}\tau_{r}^{-1}\\r&\mapsto&\tau_{r}\Delta_{\Gamma_{r}}\tau_{r}^{-1}\end{array}$$

\subsection{G\^ateaux differentiability of the surface differential operators}
\begin{lemma}\label{nabla}The application $$\begin{array}{cccc}G:&B^{\infty}_{\epsilon}&\rightarrow&\mathscr{L}_{c}(H^{s+1}(\Gamma),\HH^{s}(\Gamma))\\&r&\mapsto&\tau_{r}\nabla_{\Gamma_{r}}\tau_{r}^{-1}\end{array}$$ is $\mathscr{C}^{\infty}$-G\^ateaux differentiable and its first derivative  at $r_{0}$ is  defined for $\xi\in\mathscr{C}^{\infty}_{b}(\R^3,\R^3)$ by
$$\frac{\partial G}{\partial r}[r_{0},\xi]u=-[G(r_{0})\xi]G(r_{0})u+\left(G(r_{0})u\cdot[G(r_{0})\xi]N(r_{0})\right)N(r_{0}).$$\end{lemma}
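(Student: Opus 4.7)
The plan is to write $G(r)$ as a product of factors whose Gâteaux differentiability is already available and then differentiate by the product rule. For $u\in H^{s+1}(\Gamma)$, choose any extension $\tilde u\in H^{s+3/2}(\R^3)$; then $\tau_r^{-1}u=\tilde u\circ(\Id+r)^{-1}$ near $\Gamma_r$, so the Euclidean gradient at $x_r=x+r(x)$ equals $[(\Id+Dr(x))^{-1}]^T\nabla\tilde u(x)$. Projecting tangentially along $\nn_r$ and pulling back through $\tau_r$ yields
$$G(r)u \;=\; A(r)\,\nabla\tilde u_{|\Gamma} - \bigl(A(r)\,\nabla\tilde u_{|\Gamma}\cdot N(r)\bigr)\,N(r), \qquad A(r):=\bigl[(\Id+Dr_{|\Gamma})^{-1}\bigr]^T,$$
with $N(r)=\tau_r\nn_r$ as in Lemma \ref{N}. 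A short check, using the decomposition $\nabla\tilde u=\nabla_{\Gamma}u+(\partial_{\nn}\tilde u)\nn$ combined with the fact that $A(r)\nn$ retains a nonzero $N(r)$-component that kills the $\partial_{\nn}\tilde u$ term, shows the right-hand side is independent of the chosen extension.

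For the $\mathscr{C}^\infty$-Gâteaux differentiability, the map $r\mapsto \Id+Dr_{|\Gamma}$ is affine into the unitary Fréchet algebra $\mathscr{C}^\infty(\Gamma,\R^{3\times 3})$, takes invertible values for $\epsilon$ small enough, and has continuous inverse; Lemma \ref{d-1} then gives the $\mathscr{C}^\infty$-Gâteaux differentiability of $r\mapsto A(r)$. Combined with Lemma \ref{N} and standard multiplication properties in $\mathscr{C}^\infty$-modules on $\Gamma$, we conclude that $r\mapsto G(r)\in\mathscr{L}_c(H^{s+1}(\Gamma),\HH^s(\Gamma))$ is $\mathscr{C}^\infty$-Gâteaux differentiable.

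For the explicit first derivative, apply the product rule. At $r_0=0$, using $A'(0)\xi=-(D\xi)^T=-[\nabla\xi]$ and the identity $N'(0)\xi=-[\nabla_\Gamma\xi]\nn$ from Lemma \ref{N}, the four resulting terms simplify once we write $\nabla\tilde u=\nabla_\Gamma u+(\partial_\nn\tilde u)\nn$ and $[\nabla\xi]=[\nabla_\Gamma\xi]+\nn(\partial_\nn\xi)^T$ and invoke $\nn\cdot[\nabla_\Gamma\xi]\nn=0$: the contributions involving $\partial_\nn\tilde u$ and those involving $\partial_\nn\xi$ cancel pairwise, leaving
$$\frac{\partial G}{\partial r}[0,\xi]u \;=\; -[\nabla_\Gamma\xi]\,\nabla_\Gamma u \;+\; \bigl(\nabla_\Gamma u\cdot[\nabla_\Gamma\xi]\nn\bigr)\nn.$$
The same product-rule computation at a general $r_0$, with $A(r_0)$ and $N(r_0)$ in place of $\Id$ and $\nn$, recovers the stated formula $-[G(r_0)\xi]\,G(r_0)u+\bigl(G(r_0)u\cdot[G(r_0)\xi]N(r_0)\bigr)N(r_0)$ after the analogous cancellation.

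The main obstacle is this last simplification: $A'(0)$ a priori introduces full Euclidean derivatives of $u$ and $\xi$, and it is only after combining with $N'(0)\xi$ that the extension-dependent normal-derivative contributions cancel, producing a formula intrinsic to $\Gamma_{r_0}$. The $\mathscr{C}^\infty$-Gâteaux differentiability itself is a routine consequence of Lemmas \ref{d-1} and \ref{N}.
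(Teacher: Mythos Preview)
Your proof is correct and follows essentially the same approach as the paper: both write $G(r)u$ as the tangential projection along $N(r)$ of $A(r)\nabla\tilde u$ with $A(r)=[(\Id+Dr)^{-1}]^{T}$, invoke Lemma~\ref{d-1} for the differentiability of $A$ and Lemma~\ref{N} for $N$, and differentiate by the product rule. The only cosmetic difference is that the paper packages $A(r)\nabla\tilde u$ as an auxiliary map $f(r)u$ and records the identity $\tfrac{\partial f}{\partial r}[r_0,\xi]u=-[f(r_0)\xi]f(r_0)u$ directly, which makes the general-$r_0$ simplification a one-liner rather than a repetition of the $r_0=0$ cancellation.
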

\begin{remark}Note that we can write $\dfrac{\partial N}{\partial r}[r_{0},\xi]=-[G(r_{0})\xi]N(r_{0})$. Since the first derivative of $N$ and $G$ can be expressed in function of $N$ and $G$ we obtain the G\^ateaux derivative of all order iteratively. 
\end{remark}

\begin{proof}In accordance to the definition \eqref{G} and the lemma \ref{N}, to prove the $\mathscr{C}^{\infty}$-G\^ateaux differentiability of $G$ we have to prove the $\mathscr{C}^{\infty}$- G\^ateaux differentiability of the application $$f:r\in B^{\infty}_{\epsilon}\mapsto\left\{u\mapsto \tau_{r}\left(\nabla\widetilde{\tau_{r}^{-1}u}\right)_{|_{\Gamma_{r}}}\right\}\in \mathscr{L}_{c}(H^{s+1}(\Gamma),\HH^{s}(\Gamma)).$$
 Let $x\in\Gamma$, we have $$\tau_{r}\left(\nabla\widetilde{\tau_{r}^{-1}u}\right)_{|_{\Gamma_{r}}}(x)=\nabla\left(\widetilde{u}\circ(\Id+r)^{-1}\right)_{|_{\Gamma_{r}}}(x+r(x))=\transposee{\left(\Id+\D r\right)_{|_{\Gamma_{r}}}^{-1}}(x+r(x))\circ\nabla\widetilde{u}_{|_{\Gamma}}(x),$$
and
$$\left(\Id+\D r\right)_{|_{\Gamma_{r}}}^{-1}(x+r(x))=\left[(\Id+\D r)_{|_{\Gamma}}(x)\right]^{-1}.$$The application $g:r\in B^{\infty}_{\epsilon}\mapsto(\Id+\D r)_{|_{\Gamma}}\in\mathscr{C}^{\infty}(\Gamma)$ is continuous, and $\mathscr{C}^{\infty}$-G\^ateaux differentiable. Its first derivative is  $\dfrac{\partial}{\partial r}g[0,\xi]=[\D\xi]_{|_{\Gamma}}$ and its higher order derivatives vanish. One can show that the application $h:r\in B^{\infty}_{\epsilon}\mapsto \left\{x\mapsto [g(r)(x)]^{-1}\right\}\in\mathscr{C}^{\infty}(\Gamma)$ is also $\mathscr{C}^{\infty}$ G\^ateaux-differentiable and that we have at $r_{0}$ and in the direction $\xi$:
 $$\dfrac{\partial h}{\partial r}[r_{0},\xi]=-h(r_{0})\circ\frac{\partial g}{\partial r}[r_{0},\xi]\circ h(r_{0})=-h(r_{0})\circ[\D\xi]_{|_{\Gamma}}\circ h(r_{0}).$$
 and 
 $$\dfrac{\partial^n h}{\partial r^n}[r_{0},\xi_{1},\hdots,\xi_{n}]=(-1)^n\sum_{\sigma\in\mathscr{S}_{n}}(\Id+\D r_{0})^{-1}\circ[\tau_{r_{0}}\D\tau_{r_{0}}^{-1}\xi_{\sigma(1)}]\circ\hdots\circ[\tau_{r_{0}}\D\tau_{r_{0}}^{-1}\xi_{\sigma(n)}]$$ where $\mathscr{S}_{n}$ is the permutation groupe of $\{1,\hdots,n\}$.
Finally we obtain the $\mathscr{C}^{\infty}-$ G\^ateaux differentiability of $f$ and we have 
$$\frac{\partial f}{\partial r}[r_{0},\xi]u=-[f(r_{0})\xi]f(r_{0})u.$$
To obtain the expression of the first derivative of $G$ we have to derive the following expression:
$$\begin{array}{cl}G(r)u&=(\tau_{r}\nabla_{\Gamma_{r}}\tau_{r}^{-1}u)=\tau_{r}\nabla \left(\widetilde{\tau_{r}^{-1}u}\right)-\left(\tau_{r}\nn_{r}\cdot\left(\tau_{r}\nabla\left( \widetilde{\tau_{r}^{-1}u}\right)\right)\right)\tau_{r}\nn_{r}\\&=f(r_{0})u-\left(f(r_{0})u\cdot N(r_{0})\right)N(r_{0}).\end{array}$$ By lemma \ref{N} and the chain and product rules we have
 $$\begin{array}{cl}\dfrac{\partial G}{\partial r}[r_{0},\xi]=&-[f(r_{0})\xi]f(r_{0})u+\left([f(r_{0})\xi]f(r_{0})u\cdot N(r_{0})\right)N(r_{0})\\&+\left(f(r_{0})u\cdot [G(r_{0})\xi]N(r_{0})\right)N(r_{0})+\left(f(r_{0})u\cdot N(r_{0})\right)[G(r_{0})\xi]N(r_{0})\end{array}$$
We had the first two terms in the right handside, it gives :
$$\begin{array}{rl}\dfrac{\partial G}{\partial r}[r_{0},\xi]=&-[G(r_{0})\xi]f(r_{0})u+\left(f(r_{0})u\cdot N(r_{0})\right)[G(r_{0})\xi]N(r_{0})\\&+\left(f(r_{0})u\cdot [G(r_{0})\xi]N(r_{0})\right)N(r_{0})\vspace{2mm}\\=&-[G(r_{0})\xi]G(r_{0})u+\left(f(r_{0})u\cdot [G(r_{0})\xi]N(r_{0})\right)N(r_{0}).\end{array}$$
 To conclude it suffice to note that $\left(f(r_{0})u\cdot [G(r_{0})\xi]N(r_{0})\right)=\left(G(r_{0})u\cdot [G(r_{0})\xi]N(r_{0})\right)$.
 \end{proof}
 
\begin{lemma}\label{D}The application $$\begin{array}{cccc}D:&B^{\infty}_{\epsilon}&\rightarrow&\mathscr{L}_{c}(\HH^{s+1}(\Gamma,\R^3),H^s(\Gamma))\\&r&\mapsto&\tau_{r}\Div_{\Gamma_{r}}\tau_{r}^{-1}\end{array}$$ is $\mathscr{C}^{\infty}$-G\^ateaux differentiable  and its first derivative  at $r_{0}$ is  defined for $\xi\in\mathscr{C}^{\infty}_{b}(\R^3,\R^3)$ by
$$\frac{\partial D}{\partial r}[r_{0},\xi]\uu=-\Tr([G(r_{0})\xi][G(r_{0})\uu])+\left([G(r_{0})\uu]N(r_{0})\cdot[G(r_{0})\xi]N(r_{0})\right).$$\end{lemma}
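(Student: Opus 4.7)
The plan is to reduce Lemma \ref{D} to Lemma \ref{nabla} via the pointwise identity $\operatorname{Div}_{\Gamma_r}\mathbf{v}_r = \operatorname{Trace}([\nabla_{\Gamma_r}\mathbf{v}_r])$, a direct consequence of the definitions \eqref{G} and \eqref{D} together with the invariance of trace under transpose. Applying $\tau_r$ componentwise gives
\begin{equation*}
D(r)\mathbf{u} \;=\; \operatorname{Trace}\!\bigl([G(r)\mathbf{u}]\bigr),
\end{equation*}
where $[G(r)\mathbf{u}]$ denotes the $3\times 3$ matrix whose $i$-th column is $G(r)u_i$. This reduces the vector-valued surface divergence to a trace of the scalar surface-gradient maps already handled in Lemma \ref{nabla}.

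First, I would settle differentiability. Since each coordinate map $r\mapsto G(r)u_i\in\HH^s(\Gamma)$ is $\mathscr{C}^\infty$-G\^ateaux differentiable by Lemma \ref{nabla}, so is the matrix-valued map $r\mapsto [G(r)\mathbf{u}]$. Composing with the continuous linear functional $\operatorname{Trace}$ preserves this regularity, and multilinearity, symmetry and continuity with respect to the direction variables are inherited from Lemma \ref{nabla}. Hence $D$ is $\mathscr{C}^\infty$-G\^ateaux differentiable as a map $B^\infty_\epsilon\to\mathscr{L}_c(\HH^{s+1}(\Gamma,\R^3),H^s(\Gamma))$.

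Second, for the first derivative, I would apply Lemma \ref{nabla} column by column and reassemble the result into a single matrix identity,
\begin{equation*}
\frac{\partial [G(r)\mathbf{u}]}{\partial r}[r_0,\xi] \;=\; -[G(r_0)\xi]\,[G(r_0)\mathbf{u}] \;+\; N(r_0)\bigl([G(r_0)\xi]N(r_0)\bigr)^{\!T}[G(r_0)\mathbf{u}].
\end{equation*}
Taking the trace, using cyclicity and the elementary identity $\operatorname{Trace}(N v^{T}A)=AN\cdot v$ for column vectors $N,v$, yields precisely the stated formula. Higher-order derivatives then follow iteratively, exactly as in the remark after Lemma \ref{nabla}: since $\partial_r D$ is expressed entirely in terms of $G$ and $N$, whose $\mathscr{C}^\infty$-G\^ateaux differentiability is already known (Lemmas \ref{nabla} and \ref{N}), all successive derivatives exist and admit closed-form expressions in $G$ and $N$.

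The only genuine difficulty is the matrix bookkeeping in the second step: gathering the columnwise contributions of Lemma \ref{nabla} into a matrix product plus a rank-one correction, and simplifying the trace of the rank-one term into the scalar product $([G(r_0)\mathbf{u}]N(r_0)\cdot[G(r_0)\xi]N(r_0))$. No new analytic obstacle arises, since all smoothness has been secured at the scalar level in Lemma \ref{nabla}.
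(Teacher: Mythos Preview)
Your proposal is correct and follows essentially the same approach as the paper: the paper's proof consists of the single line ``For $\uu\in\HH^{s+1}(\Gamma,\R^n)$ we have $D(r)\uu=\Tr(G(r)\uu)$; then we use the differentiation rules,'' and you have simply written out those differentiation rules in detail. Your matrix bookkeeping and the rank-one trace simplification are accurate and recover exactly the stated formula.
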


\begin{proof}For $\uu\in \HH^{s+1}(\Gamma,\R^n)$ we have $D(r)\uu=\Tr(G(r)\uu)$. Then we use the differentiation rules.
\begin{remark}Since the first derivative of $D$ is composed of $G$ and $N$ and the first  derivative of $J$ is composed of $J$ and $D$, we can obtain an expression of higher order derivatives of the jacobian iteratively. 
\end{remark}
\end{proof}

\begin{lemma}\label{rot2}The application $$\begin{array}{cccc}\mathbf{R}:& B^{\infty}_{\epsilon}&\rightarrow&\mathscr{L}_{c}(H^{s+1}(\Gamma),\HH^s(\Gamma))\\&r&\mapsto&\tau_{r}\Rot_{\Gamma_{r}}\tau_{r}^{-1}\end{array}$$ is $\mathscr{C}^{\infty}$-G\^ateaux differentiable and its first derivative at $r_{0}$ is defined for $\xi\in\mathscr{C}^{\infty}_{b}(\R^3,\R^3)$ by 
$$\frac{\partial \mathbf{R}}{\partial r}[r_{0},\xi]u=\transposee{[G(r_{0})\xi]}\mathbf{R}(r_{0})u-D(r_{0})\xi\cdot\mathbf{R}(r_{0})u.$$\end{lemma}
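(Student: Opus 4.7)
The plan is to reduce the statement to a combination of Lemmas \ref{nabla} and \ref{N} by writing $\mathbf{R}(r)$ as a product, then applying the Leibniz rule and an algebraic identity. First I would express the operator in a form amenable to differentiation. By the definition \eqref{RR} of the tangential vector curl, we have for $u\in H^{s+1}(\Gamma)$
\begin{equation*}
\mathbf{R}(r)u=\tau_{r}\bigl(\nabla_{\Gamma_{r}}(\tau_{r}^{-1}u)\times \nn_{r}\bigr)=\bigl(\tau_{r}\nabla_{\Gamma_{r}}\tau_{r}^{-1}u\bigr)\times (\tau_{r}\nn_{r})=G(r)u\times N(r),
\end{equation*}
because the cross product is evaluated pointwise and commutes with the composition $\tau_{r}$.

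From this factorization the $\mathscr{C}^{\infty}$-G\^ateaux differentiability of $\mathbf{R}$ follows from the $\mathscr{C}^{\infty}$-G\^ateaux differentiability of $G$ (Lemma \ref{nabla}) and $N$ (Lemma \ref{N}) via the product rule applied in the Fr\'echet setting. Iterating the Leibniz rule and the formulas for higher derivatives of $G$ and $N$ yields $\mathscr{C}^{m}$-differentiability for every $m$. For the first derivative, the product rule gives at $r_{0}\in B^{\infty}_{\epsilon}$ and $\xi\in\mathscr{C}^{\infty}_{b}(\R^3,\R^3)$
\begin{equation*}
\frac{\partial \mathbf{R}}{\partial r}[r_{0},\xi]u=\Bigl(\frac{\partial G}{\partial r}[r_{0},\xi]u\Bigr)\times N(r_{0})+G(r_{0})u\times\frac{\partial N}{\partial r}[r_{0},\xi].
\end{equation*}
Substituting the expressions from Lemmas \ref{nabla} and \ref{N}, the ``normal'' contribution $(G(r_{0})u\cdot[G(r_{0})\xi]N(r_{0}))\,N(r_{0})\times N(r_{0})$ vanishes, and one is left with
\begin{equation*}
\frac{\partial \mathbf{R}}{\partial r}[r_{0},\xi]u=-\bigl([G(r_{0})\xi]G(r_{0})u\bigr)\times N(r_{0})-G(r_{0})u\times\bigl([G(r_{0})\xi]N(r_{0})\bigr).
\end{equation*}

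The remaining step, which I expect to be the only subtle point, is to recognize the algebraic identity
\begin{equation*}
(Ba)\times b+a\times(Bb)=\Tr(B)\,(a\times b)-\transposee{B}(a\times b),
\end{equation*}
valid for any $3\times 3$ matrix $B$ and vectors $a,b\in\R^{3}$ (obtained by linearizing the cofactor identity $\com(A)(a\times b)=(Aa)\times(Ab)$ about $A=\Id$). Applying this with $B=[G(r_{0})\xi]$, $a=G(r_{0})u$, $b=N(r_{0})$, and using $\Tr[G(r_{0})\xi]=D(r_{0})\xi$ together with $\mathbf{R}(r_{0})u=G(r_{0})u\times N(r_{0})$, gives exactly
\begin{equation*}
\frac{\partial \mathbf{R}}{\partial r}[r_{0},\xi]u=\transposee{[G(r_{0})\xi]}\mathbf{R}(r_{0})u-D(r_{0})\xi\cdot\mathbf{R}(r_{0})u,
\end{equation*}
which is the claimed formula. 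Linearity, symmetry and continuity of the multilinear higher derivatives are then inherited from the corresponding properties established for $G$, $N$ and $D$.
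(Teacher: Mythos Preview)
Your proof is correct and follows essentially the same route as the paper: write $\mathbf{R}(r)u=G(r)u\times N(r)$, differentiate using Lemmas~\ref{N} and~\ref{nabla} via the product rule, then apply the identity $(Ba)\times b+a\times(Bb)=\Tr(B)(a\times b)-\transposee{B}(a\times b)$ with $B=[G(r_{0})\xi]$. Your explicit remark that the normal contribution vanishes because $N(r_{0})\times N(r_{0})=0$ and your justification of the matrix identity via linearization of the cofactor formula are nice additions but do not change the argument.
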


\begin{proof} Let  $u\in H^{s+1}(\Gamma)$. By definition, we have $\mathbf{R}(r_{0})u=G(r_{0})u\times N(r_{0})$. By lemmas \ref{N} and \ref{nabla} this application is $\mathscr{C}^{\infty}$ G\^ateaux differentiable. We have in $r_{0}$ and in the direction $\xi\in\mathscr{C}^{\infty}(\Gamma,\R^3)$
$$\frac{\partial \mathbf{R}}{\partial r}[r_{0},\xi]u=-\transposee{[G(r_{0})\xi]}G(r_{0})u\times N(r_{0})-G(r_{0})u\times[G(r_{0})\xi]N(r_{0}).$$
NB: recall that given a $(3\times3)$ matrix $A$ and vectors $b$ and $c$ we have $$Ab\times c+b\times Ac=\Tr(A)(b\times c)-\transposee{A}(b\times c).$$ We deduce the expression of the first derivatives  with $A=-[G(r_{0})\xi]$, $b=G(r_{0})u$ et $c=N(r_{0})$.
\end{proof}

\begin{lemma}\label{rot1}The application $$\begin{array}{cccc}R:&B^{\infty}_{\epsilon}&\rightarrow&\mathscr{L}_{c}(\HH^{s+1}(\Gamma),H^s(\Gamma))\\&r&\mapsto&\tau_{r}\rot_{\Gamma_{r}}\tau_{r}^{-1}\end{array}$$ is $\mathscr{C}^{\infty}$-G\^ateaux differentiable and its first derivative at $r_{0}$ is defined for $\xi\in\mathscr{C}^{\infty}_{b}(\R^3,\R^3)$ by 
$$\frac{\partial R}{\partial r}[r_{0},\xi]\uu=-\sum_{i=1}^3\left(G(r_{0})\xi^{i}\cdot\mathbf{R}(r_{0})u_{i}\right)- D(r_{0})\xi\cdot R(r_{0})\uu$$ where $\uu=(u_{1},u_{2},u_{3})$ and $\xi=(\xi^{1},\xi^{2},\xi^{3})$.\end{lemma}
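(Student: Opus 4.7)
The plan is to reduce this lemma to the vector-curl case already settled in Lemma~\ref{rot2}, by means of a componentwise identity that expresses the scalar surface curl in terms of the vector surface curl of scalar components. Starting from $\rot_{\Gamma}\uu = \nn\cdot\Rot\tilde{\uu}$ and the elementary identity $\Rot\tilde{\uu} = \sum_{i=1}^3 \nabla\tilde{u}_i\times e_i$ with $(e_1,e_2,e_3)$ the canonical basis of $\R^3$, the cyclic property of the triple product together with $\Rot_{\Gamma} u_i = \nabla\tilde{u}_i\times\nn$ gives
\[
\rot_{\Gamma}\uu = \sum_i \nn\cdot(\nabla\tilde{u}_i\times e_i) = -\sum_i e_i\cdot(\nabla\tilde{u}_i\times\nn) = -\sum_{i=1}^3 e_i\cdot\Rot_{\Gamma} u_i.
\]
Since this identity is valid on any smooth surface, applying it on $\Gamma_r$ to $\tau_r^{-1}\uu$ (whose $i$-th component is $\tau_r^{-1} u_i$) and pulling back by $\tau_r$ yields the key reduction
\[
R(r)\uu = -\sum_{i=1}^3 e_i\cdot\mathbf{R}(r)u_i.
\]

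Because this is a finite linear combination of maps $r\mapsto e_i\cdot\mathbf{R}(r)u_i$, each of which is $\mathscr{C}^{\infty}$-G\^ateaux differentiable from $H^{s+1}(\Gamma)$ to $H^s(\Gamma)$ by Lemma~\ref{rot2}, the map $R$ is itself $\mathscr{C}^{\infty}$-G\^ateaux differentiable from $\HH^{s+1}(\Gamma)$ to $H^s(\Gamma)$. Differentiating termwise and substituting Lemma~\ref{rot2} produces two contributions. In the first, the identity $e_i\cdot\transposee{A}\vv = Ae_i\cdot\vv$ applied with $A=[G(r_0)\xi]$, together with the convention (implicit in Lemmas~\ref{nabla}--\ref{rot2}) that the $i$-th column of $[G(r_0)\xi]$ is $G(r_0)\xi^i$, converts $-\sum_i e_i\cdot\transposee{[G(r_0)\xi]}\mathbf{R}(r_0)u_i$ into $-\sum_i G(r_0)\xi^i\cdot\mathbf{R}(r_0)u_i$. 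In the second, the scalar factor $D(r_0)\xi$ pulls out and the remaining $\sum_i e_i\cdot\mathbf{R}(r_0)u_i$ equals $-R(r_0)\uu$ by the identity of the first paragraph, contributing $-D(r_0)\xi\cdot R(r_0)\uu$. Adding the two terms yields the formula in the statement.

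There is essentially no obstacle once the componentwise reduction is in hand: both the $\mathscr{C}^{\infty}$ differentiability and the explicit form of the first derivative follow mechanically from Lemma~\ref{rot2} and the chain/product rules, which explains why this lemma is placed immediately after Lemma~\ref{rot2}. The only mild care needed is in tracking the sign and the role of the transpose, both of which are controlled by the triple-product identity of the first paragraph and by the notational convention for the matrix $[G(r_0)\xi]$.
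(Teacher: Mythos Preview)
Your argument is correct and essentially identical to the paper's: the paper writes the key reduction compactly as $R(r_{0})\uu=-\Tr(\mathbf{R}(r_{0})\uu)$ (which is exactly your identity $R(r)\uu=-\sum_{i}e_{i}\cdot\mathbf{R}(r)u_{i}$ in trace form) and then differentiates using Lemma~\ref{rot2}, arriving at $-\Tr(\transposee{[G(r_{0})\xi]}[\mathbf{R}(r_{0})\uu])-D(r_{0})\xi\cdot R(r_{0})\uu$. Your componentwise unpacking of the trace and the transpose is a slightly more explicit version of the same computation.
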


\begin{proof} Let $\uu\in\HH^{s+1}(\Gamma,\R^3)$. By definition of the surface rotational we have $$R(r_{0})\uu=-\Tr(\mathbf{R}(r_{0})\uu).$$ We deduce the $\mathscr{C}^{\infty}$ differentiability of $R$ and the first derivative in $r_{0}$ in the direction $\xi$ is 
\begin{equation*}\begin{split}\frac{\partial R}{\partial r}[r_{0},\xi]\uu=&-\Tr\left(\frac{\partial \mathbf{R}}{\partial r}[r_{0},\xi]\uu\right)\\=&-\Tr\left(\transposee{[G(r_{0})\xi]}[\mathbf{R}(r_{0})\uu]\right)-D(r_{0})\xi\cdot \Tr\left(-\mathbf{R}(r_{0})\uu]\right)\\=&-\sum_{i=1}^3\left(G(r_{0})\xi_{i}\cdot\mathbf{R}(r_{0})\uu_{i}\right)- D(r_{0})\xi\cdot R(r_{0})\uu.\end{split}\end{equation*}
\end{proof}Here again we can obtain higher order derivatives of these operators iteratively.
\begin{remark} One can see that  we do not need more than the first derivative of the deformations $\xi$.  Thus these results hold true for boundaries and deformations of class $\mathscr{C}^{k+1}$, $k\in\N^*$ with  differential operators  considered in $\mathscr{L}_{c}(\mathscr{C}^{k+1}(\Gamma),\mathscr{C}^k(\Gamma))$.\end{remark}
When $\xi=\nn$ we obtain the commutators
\begin{equation}\label{commutators}\begin{split}\frac{\partial}{\partial\nn}(\nabla_{\Gamma}u)-\nabla_{\Gamma}\left(\frac{\partial}{\partial\nn}u\right)=&\;-\mathcal{R}_{\Gamma}\nabla_{\Gamma}u\\\frac{\partial}{\partial\nn}(\Rot_{\Gamma}u)-\Rot_{\Gamma}\left(\frac{\partial}{\partial\nn}u\right)=&\;\mathcal{R}_{\Gamma}\Rot_{\Gamma}u-\mathcal{H}_{\Gamma}\Rot_{\Gamma}u\\\frac{\partial}{\partial\nn}(\Div_{\Gamma}\uu)-\Div_{\Gamma}\left(\frac{\partial}{\partial\nn}\uu\right)=&\;-\Tr(\mathcal{R}_{\Gamma}[\nabla_{\Gamma}\uu])\\\frac{\partial}{\partial\nn}(\rot_{\Gamma}\uu)-\rot_{\Gamma}\left(\frac{\partial}{\partial\nn}\uu\right)=&\;-\Tr(\mathcal{R}_{\Gamma}[\Rot_{\Gamma}\uu])-\mathcal{H}_{\Gamma}\rot_{\Gamma}\uu\end{split}\end{equation}
where $R_{\Gamma}=[\nabla_{\Gamma}\nn]$ and $\mathcal{H}_{\Gamma}=\Div_{\Gamma}\nn$.

From the precedent results we have:

\begin{lemma}\label{delta}The application $$\begin{array}{cccc}L :& B^{\infty}_{\epsilon}&\rightarrow&\mathscr{L}_{c}(H^{s+2}(\Gamma),H^{s}(\Gamma))\\&r&\mapsto&\tau_{r}\Delta_{\Gamma_{r}}\tau_{r}^{-1}\end{array}$$ is $\mathscr{C}^{\infty}$-G\^ateaux differentiable. \end{lemma}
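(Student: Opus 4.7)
The plan is to reduce $L$ to a composition of two maps whose $\mathscr{C}^\infty$-G\^ateaux differentiability has already been established in Lemmas \ref{nabla} and \ref{D}. By the identity \eqref{eqd1}, namely $\Delta_{\Gamma_r}=\Div_{\Gamma_r}\nabla_{\Gamma_r}$, and the obvious fact that $\tau_r$ and $\tau_r^{-1}$ cancel when inserted between the two factors, I can write
\begin{equation*}
L(r)=\tau_r\Delta_{\Gamma_r}\tau_r^{-1}=\bigl(\tau_r\Div_{\Gamma_r}\tau_r^{-1}\bigr)\bigl(\tau_r\nabla_{\Gamma_r}\tau_r^{-1}\bigr)=D(r)\circ G(r).
\end{equation*}
Viewing $L$ as an element of $\mathscr{L}_c(H^{s+2}(\Gamma),H^s(\Gamma))$, the inner factor $G(r)\in\mathscr{L}_c(H^{s+2}(\Gamma),\HH^{s+1}(\Gamma))$ is $\mathscr{C}^\infty$-G\^ateaux differentiable by Lemma~\ref{nabla}, and the outer factor $D(r)\in\mathscr{L}_c(\HH^{s+1}(\Gamma),H^s(\Gamma))$ is $\mathscr{C}^\infty$-G\^ateaux differentiable by Lemma~\ref{D}, for every real $s$.

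Next I would invoke the product/composition rule for $\mathscr{C}^\infty$-G\^ateaux differentiable maps in Fr\'echet spaces (recalled in Section~\ref{ShapeD} following \cite{Schwartz}), applied to the bilinear composition map $(A,B)\mapsto A\circ B$ from $\mathscr{L}_c(\HH^{s+1}(\Gamma),H^s(\Gamma))\times\mathscr{L}_c(H^{s+2}(\Gamma),\HH^{s+1}(\Gamma))$ into $\mathscr{L}_c(H^{s+2}(\Gamma),H^s(\Gamma))$. This gives $\mathscr{C}^\infty$-G\^ateaux differentiability of $L=D\circ G$, together with the explicit first derivative
\begin{equation*}
\frac{\partial L}{\partial r}[r_0,\xi]\,u\;=\;\frac{\partial D}{\partial r}[r_0,\xi]\bigl(G(r_0)u\bigr)+D(r_0)\Bigl(\frac{\partial G}{\partial r}[r_0,\xi]\,u\Bigr),
\end{equation*}
in which each summand has already received a closed form in the preceding two lemmas. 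Higher derivatives are obtained by iterating the product rule and using the explicit formulas for $\partial^k G/\partial r^k$ and $\partial^k D/\partial r^k$ that come, via Lemmas \ref{J} and \ref{N}, from the recursion $(\#)$.

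There is essentially no obstacle: the only point that must be verified, and which I would mention explicitly, is that the continuity orders match so that the composition indeed lands in $\mathscr{L}_c(H^{s+2}(\Gamma),H^s(\Gamma))$. This is immediate from the mapping properties stated in Lemmas~\ref{nabla} and~\ref{D} and from the continuity in $s\in\R$ of the surface gradient and surface divergence recalled in Section~\ref{BoundIntOp}. The same factorization argument, together with Remark following Lemma \ref{rot1}, shows that one needs only one derivative of the deformation $\xi$, so the result remains valid for boundaries and deformations of finite regularity $\mathscr{C}^{k+1}$.
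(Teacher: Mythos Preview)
Your proposal is correct and follows essentially the same approach as the paper: the paper's proof simply writes $\tau_{r}\Delta_{\Gamma_{r}}\tau_{r}^{-1}=(\tau_{r}\Div_{\Gamma_{r}}\tau_{r}^{-1})(\tau_{r}\nabla_{\Gamma_{r}}\tau_{r}^{-1})$ (and also notes the alternative factorization via $\rot_{\Gamma_r}\Rot_{\Gamma_r}$) and concludes by composition of infinitely G\^ateaux differentiable maps. Your version is a more detailed elaboration of the same argument, with the added explicit first-derivative formula and the verification that the Sobolev indices match.
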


\begin{proof} It suffice to write:
$$\tau_{r}\Delta_{\Gamma_{r}}\tau_{r}^{-1}=(\tau_{r}\Div_{\Gamma_{r}}\tau_{r}^{-1})(\tau_{r}\nabla_{\Gamma_{r}}\tau_{r}^{-1})=-(\tau_{r}\rot_{\Gamma_{r}}\tau_{r}^{-1})(\tau_{r}\Rot_{\Gamma_{r}}\tau_{r}^{-1}).$$ The operators $\tau_{r}\Delta_{\Gamma_{r}}\tau_{r}^{-1}$ is composed of operators infinitely G\^ateaux differentiable.\end{proof}

 View the integral representations of the  operators $\Pp_{r}C_{\kappa}^r\Pp_{r}^{-1}$ and $\Pp_{r}M_{\kappa}^r\Pp_{r}^{-1}$ we have to  study the G\^ateaux differentiability of the  applications $r\mapsto\tau_{r}\Delta_{\Gamma_{r}}^{-1}\rot_{\Gamma_{r}}\tau_{r}^{-1}$  and $r\mapsto\tau_{r}\Delta_{\Gamma_{r}}^{-1}\rot_{\Gamma_{r}}\tau_{r}^{-1}$. We have seen that for $r\in B^{\infty}_{\epsilon}$ the operator $\rot_{\Gamma_{r}}$ is linear and continuous from $\HH^{s+1}(\Gamma_{r})$ to $H^s_{*}(\Gamma_{r})$, that the operator  $\Div_{\Gamma_{r}}$ is linear and continuous from $\TT\HH^{s+1}(\Gamma_{r})$ in  $H_{*}^s(\Gamma_{r})$ and that $\Delta_{\Gamma_{r}}^{-1}$ is defined from $H^{s}_{*}(\Gamma_{r})$ in $H^{s+2}(\Gamma_{r})\slash\R$. To use the chain rules,  it is  important to construct derivatives in $r=0$ between  the spaces $\HH^{s+1}(\Gamma)$ and $H^s_{*}(\Gamma)$ for the scalar curl operator, between the spaces $\TT\HH^{s+1}(\Gamma)$ and $H^s_{*}(\Gamma)$  for the divergence operator and between the spaces   $H^s_{*}(\Gamma)$ and $H^{s+2}(\Gamma)/\R$ for the Laplace-Beltrami operator. As an alternative we use the :

 \begin{proposition} Let $u$ be a scalar  function defined on $\Gamma_{r}$. Then $u_{r}\in H^s_{*}(\Gamma_{r})$ if and only if $J_{r}\tau_{r}u_{r}=J_{r}u_{r}\circ(\Id+r)\in H^s_{*}(\Gamma)$.
\end{proposition}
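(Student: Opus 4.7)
The statement asserts two things at once: that the map $u_r \mapsto J_r\tau_r u_r$ preserves $H^s$-regularity in both directions, and that it preserves the zero-mean condition. The plan is to treat these two requirements separately and then combine them.

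First I would handle the regularity. Since $r \in B^\infty_\epsilon$ with $\epsilon$ small, the map $\Id + r : \Gamma \to \Gamma_r$ is a $\mathscr{C}^\infty$-diffeomorphism, hence the pullback $\tau_r$ is an isomorphism from $H^s(\Gamma_r)$ onto $H^s(\Gamma)$ for every $s \in \R$. Furthermore, by Lemma \ref{J}, $J_r \in \mathscr{C}^\infty(\Gamma,\R)$, and for $\epsilon$ small enough $J_r$ is uniformly bounded below by a positive constant on $\Gamma$ (it equals $1$ at $r=0$ and depends continuously on $r$). Consequently, multiplication by $J_r$ is an isomorphism of $H^s(\Gamma)$ onto itself. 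The composition $u_r \mapsto \tau_r u_r \mapsto J_r \tau_r u_r$ is therefore an isomorphism $H^s(\Gamma_r) \to H^s(\Gamma)$, which proves the equivalence at the level of $H^s$-membership.

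Next I would deal with the zero-mean condition, which is the arithmetic heart of the statement. By the very definition of the Jacobian of the change of variables $x \mapsto x + r(x)$ between surface measures,
\begin{equation*}
\int_{\Gamma_r} u_r(y_r)\,d\sigma(y_r) = \int_\Gamma u_r(y + r(y)) \, J_r(y)\,d\sigma(y) = \int_\Gamma (J_r\,\tau_r u_r)(y)\,d\sigma(y),
\end{equation*}
valid for $u_r \in H^s(\Gamma_r)$ via the usual duality extension (the pairing $\langle u_r, 1\rangle$ on $\Gamma_r$ corresponds to $\langle J_r\tau_r u_r, 1\rangle$ on $\Gamma$). Hence $\int_{\Gamma_r} u_r = 0$ if and only if $\int_\Gamma J_r\tau_r u_r = 0$.

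Combining the two steps gives exactly the announced equivalence: $u_r \in H^s_*(\Gamma_r)$ iff $u_r \in H^s(\Gamma_r)$ with vanishing mean, iff $J_r\tau_r u_r \in H^s(\Gamma)$ with vanishing mean, iff $J_r\tau_r u_r \in H^s_*(\Gamma)$. No step is genuinely hard; the only point that requires a moment of care is the change of variables formula for negative $s$, which is justified by density of smooth functions and by the fact that the pairing with the constant function $1 \in \mathscr{C}^\infty(\Gamma_r)$ is bilinear-continuous in the relevant Sobolev duality.
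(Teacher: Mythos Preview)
Your proof is correct. The paper states this proposition without proof, so there is no argument to compare against; your approach --- separating the $H^s$-isomorphism (pullback by a smooth diffeomorphism composed with multiplication by the smooth nonvanishing Jacobian) from the zero-mean equivalence (the defining change-of-variables identity $\int_{\Gamma_r} u_r\,d\sigma = \int_\Gamma J_r\,\tau_r u_r\,d\sigma$) --- is exactly the natural justification one would supply, and your remark on extending the integral identity to negative $s$ by duality is the right way to handle that point.
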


 As a consequence the applications $r\mapsto J_{r}\tau_{r}\rot_{\Gamma_{r}}\tau_{r}^{-1}$ and $r\mapsto   J_{r}\tau_{r}\Div_{\Gamma_{r}}\pi^{-1}(r)$ are well-defined from $\HH^{s+1}(\Gamma)$ and $\TT\HH^{s+1}(\Gamma)$ respectively to $H^s_{*}(\Gamma)$.

\begin{lemma}The applications $$\begin{array}{ccc}B^{\infty}_{\epsilon}&\rightarrow&\mathscr{L}_{c}(\HH^{s+1}(\Gamma),\HH^{s}_{*}(\Gamma))\\r&\mapsto&J_{r}\tau_{r}\rot_{\Gamma_{r}}\tau_{r}^{-1}\end{array}\text{ and }\begin{array}{ccc}B^{\infty}_{\epsilon}&\rightarrow&\mathscr{L}_{c}(\TT\HH^{s+1}(\Gamma),\HH^{s}_{*}(\Gamma))\\r&\mapsto&J_{r}\tau_{r}\Div_{\Gamma_{r}}\pi^{-1}(r)\end{array}$$ are $\mathscr{C}^{\infty}$-G\^ateaux differentiable end their first derivatives at $r=0$ defined for $\xi\in\mathscr{C}^{\infty}_{b}(\R^3,\R^3)$ by 
$$\frac{\partial J_{r}\tau_{r}\rot_{\Gamma_{r}}\tau_{r}^{-1}}{\partial r}[0,\xi]\uu=-\sum_{i=1}^3\nabla_{\Gamma}\xi_{i}\cdot\Rot_{\Gamma}\uu_{i}.$$
and $$\begin{array}{ll}\dfrac{\partial J_{r}\tau_{r}\Div_{\Gamma_{r}}\pi^{-1}(r)}{\partial r}[0,\xi]\uu=&-\Tr([\nabla_{\Gamma}\xi]\nabla_{\Gamma}\uu)+\Div_{\Gamma}\xi\Div_{\Gamma}\uu +\left([\nabla_{\Gamma}\uu]\nn\cdot[\nabla_{\Gamma}\xi]\nn\right)\\&+\left(\uu\cdot[\nabla_{\Gamma}\xi]\nn\right)\mathcal{H}_{\Gamma}.\end{array}$$\end{lemma}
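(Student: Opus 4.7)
The plan is to factor each operator into a composition of maps whose smoothness has already been established, apply the product and chain rules, and verify that the first-order terms simplify to the stated expressions and that the images lie in the mean-zero subspace. I write $J_r\tau_r\rot_{\Gamma_r}\tau_r^{-1}=J(r)\,R(r)$, where $J(r)=J_r$ is $\mathscr{C}^\infty$-G\^ateaux differentiable by Lemma \ref{J} and $R(r)=\tau_r\rot_{\Gamma_r}\tau_r^{-1}$ by Lemma \ref{rot1}. Similarly I decompose $J_r\tau_r\Div_{\Gamma_r}\pi^{-1}(r)=J(r)\,D(r)\,\Pi(r)$, with $D(r)=\tau_r\Div_{\Gamma_r}\tau_r^{-1}$ (Lemma \ref{D}) and $\Pi(r)=\tau_r\pi^{-1}(r)$. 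The smoothness of $\Pi$ follows from the explicit formula $\Pi(r)\uu=\uu-\nn\,(N(r)\cdot\uu)/(N(r)\cdot\nn)$, combined with Lemma \ref{N} and with Lemma \ref{d-1} applied to the scalar denominator $N(r)\cdot\nn$, which equals $1$ at $r=0$ and is therefore invertible on a neighborhood of $0$.

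For the curl, the product rule at $r=0$ gives
\begin{equation*}
\frac{\partial}{\partial r}(J\,R)[0,\xi]\uu=\Div_\Gamma\xi\cdot\rot_\Gamma\uu+\frac{\partial R}{\partial r}[0,\xi]\uu.
\end{equation*}
Inserting the expression from Lemma \ref{rot1} at $r_0=0$, namely $\frac{\partial R}{\partial r}[0,\xi]\uu=-\sum_{i=1}^3\nabla_\Gamma\xi^i\cdot\Rot_\Gamma u_i-\Div_\Gamma\xi\cdot\rot_\Gamma\uu$, the two $\Div_\Gamma\xi\cdot\rot_\Gamma\uu$ terms cancel, leaving exactly the announced formula. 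This cancellation reveals the precise role played by the weighting factor $J_r$.

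For the divergence, I first compute $\frac{\partial\Pi}{\partial r}[0,\xi]\uu$: using $\frac{\partial N}{\partial r}[0,\xi]=-[\nabla_\Gamma\xi]\nn$, $N(0)=\nn$, and the tangency identity $\nn\cdot\uu=0$, the quotient rule gives $\frac{\partial\Pi}{\partial r}[0,\xi]\uu=([\nabla_\Gamma\xi]\nn\cdot\uu)\,\nn$. The three-factor product rule at $r=0$ (with $J(0)=1$, $D(0)=\Div_\Gamma$, $\Pi(0)\uu=\uu$) then produces
\begin{equation*}
\Div_\Gamma\xi\cdot\Div_\Gamma\uu+\frac{\partial D}{\partial r}[0,\xi]\uu+\Div_\Gamma\bigl(([\nabla_\Gamma\xi]\nn\cdot\uu)\,\nn\bigr),
\end{equation*}
whose middle term is read off Lemma \ref{D}. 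For the third term I use $\Div_\Gamma(f\nn)=\nabla_\Gamma f\cdot\nn+f\,\Div_\Gamma\nn=f\,\mathcal{H}_\Gamma$ (the first summand vanishes because $\nabla_\Gamma f$ is tangential), which produces $(\uu\cdot[\nabla_\Gamma\xi]\nn)\,\mathcal{H}_\Gamma$. Summation reproduces the stated formula, and higher-order G\^ateaux derivatives follow by iterating the same product and chain rule arguments on the smooth factors $J$, $D$, $R$, $\Pi$.

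The remaining subtlety is that the images have to lie in $H^s_*(\Gamma)$, not merely in $H^s(\Gamma)$. This is where the proposition cited immediately above the lemma enters: $J_r\tau_r$ is an isomorphism from $H^s_*(\Gamma_r)$ onto $H^s_*(\Gamma)$, while $\rot_{\Gamma_r}\tau_r^{-1}\uu$ and $\Div_{\Gamma_r}\pi^{-1}(r)\uu$ take values in $H^s_*(\Gamma_r)$ (by Lemma \ref{LapBel}); closedness of $H^s_*(\Gamma)$ inside $H^s(\Gamma)$ then forces every G\^ateaux derivative to remain in this subspace. The main obstacle is therefore purely algebraic, namely organizing the Leibniz expansion so that the spurious $\Div_\Gamma\xi$-type terms cancel correctly, which is exactly what the premultiplication by $J_r$ was designed to achieve.
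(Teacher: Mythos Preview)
Your proof is correct and follows essentially the same approach as the paper: factor the operators as products of $J(r)$, $R(r)$ (resp.\ $D(r)$, $\Pi(r)$), invoke Lemmas \ref{J}, \ref{N}, \ref{D}, \ref{rot1} for their smoothness and first derivatives, and apply the product rule. Your justification of the mean-zero property via closedness of $H^s_*(\Gamma)$ is equivalent to the paper's argument of differentiating the identically vanishing integral $r\mapsto\int_\Gamma J_r\tau_r\rot_{\Gamma_r}\tau_r^{-1}\uu\,d\sigma\equiv 0$ (and similarly for the divergence); the paper also offers, for the curl case, the alternative observation that $\nabla_\Gamma H^s(\Gamma)\perp\Rot_\Gamma H^s(\Gamma)$ in $\LL^2$.
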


\begin{proof} Let $\uu\in\TT\HH^{s+1}(\Gamma)$
We have $$\frac{\partial \tau_{r}\pi^{-1}(r)}{\partial r}[0,\xi]\uu= \left(\uu\cdot[\nabla_{\Gamma}\xi]\nn\right)\nn.$$
Next we use the lemma \ref{J}, \ref{D} and \ref{rot1}.
 For $u\in\HH^s(\Gamma)$, it is clear that $\sum_{i=1}^3\nabla_{\Gamma}\xi_{i}\cdot\Rot_{\Gamma}\uu_{i}$ is of vanishing mean value since the space $\nabla_{\Gamma}H^s(\Gamma)$ is orthogonal to $\Rot_{\Gamma}H^s(\Gamma)$ for the  $\LL^2$ scalar product. An other  argument whithout using the explicit form of the derivatives is : for all  $\uu\in\HH^{s+1}(\Gamma)$, we derive the application $$r\mapsto\int_{\Gamma}J_{r}\tau_{r}\rot_{\Gamma_{r}}\tau_{r}^{-1}\uu \,d\sigma\equiv0.$$ and for $\uu\in\TT\HH^{s+1}(\Gamma)$ we derive the application $$r\mapsto\int_{\Gamma}J_{r}\tau_{r}\Div_{\Gamma_{r}}\pi^{-1}(r)\uu \,d\sigma\equiv0.$$

\end{proof}
  Let us note that $u_{r}\in H^s(\Gamma_{r})/\R$ if and only if  $\tau_{r}u_{r}\in H^s(\Gamma)/\R$.
\begin{lemma}\label{delta-1} The application $$\begin{array}{ccc}B_{\epsilon}^{\infty}&\rightarrow&\mathscr{L}_{c}(H^{s+2}_{*}(\Gamma), H^s(\Gamma)\slash\R)\\r&\mapsto&\tau_{r}\Delta_{\Gamma_{r}}^{-1}\tau_{r}^{-1}J_{r}^{-1}\end{array}$$ is $\mathscr{C}^{\infty}$-G\^ateaux differentiable.\end{lemma}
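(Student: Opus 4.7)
The plan is to deduce this from the $\mathscr{C}^{\infty}$-G\^ateaux differentiability of $r\mapsto \tau_{r}\Delta_{\Gamma_{r}}\tau_{r}^{-1}$ (Lemma \ref{delta}) and of $r\mapsto J_{r}$ (Lemma \ref{J}), by recognising the operator under study as an inverse and then appealing to Lemma \ref{d-1}. The first observation is that the map
$$g(r):=J_{r}\,\tau_{r}\Delta_{\Gamma_{r}}\tau_{r}^{-1}$$
is the two-sided inverse of $f(r):=\tau_{r}\Delta_{\Gamma_{r}}^{-1}\tau_{r}^{-1}J_{r}^{-1}$. Indeed, the proposition preceding the lemma identifies $u_{r}\mapsto J_{r}\tau_{r}u_{r}$ as an isomorphism between $H^s_{*}(\Gamma_{r})$ and $H^s_{*}(\Gamma)$, and combined with Lemma \ref{LapBel}, this shows $g(r)$ is a topological isomorphism from $H^{s+2}(\Gamma)/\R$ onto $H^s_{*}(\Gamma)$ for every $r\in B_{\epsilon}^{\infty}$, with inverse $f(r)$.

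Next I would prove that the map $r\mapsto g(r)\in \mathscr{L}_{c}(H^{s+2}(\Gamma)/\R,H^s_{*}(\Gamma))$ is $\mathscr{C}^{\infty}$-G\^ateaux differentiable. Since $\tau_{r}^{-1}$ sends constants to constants and $\Delta_{\Gamma_{r}}$ annihilates them, $\tau_{r}\Delta_{\Gamma_{r}}\tau_{r}^{-1}$ descends to the quotient $H^{s+2}(\Gamma)/\R$ and takes values in $H^s(\Gamma)$; multiplying by $J_{r}$ then lands in $H^s_{*}(\Gamma)$ by the proposition above. Lemma \ref{delta} gives the $\mathscr{C}^{\infty}$-G\^ateaux differentiability of the first factor and Lemma \ref{J} that of the multiplier $J_{r}$ (as an element of $\mathscr{C}^{\infty}(\Gamma,\R)$, which acts by pointwise multiplication on $H^s_{*}(\Gamma)$ with smooth dependence). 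The product rule for $\mathscr{C}^{\infty}$-G\^ateaux derivatives in Fr\'echet spaces (\cite{Schwartz}) then yields the $\mathscr{C}^{\infty}$-G\^ateaux differentiability of $g$.

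The final step is to transfer this differentiability to $f=g^{-1}$. Lemma \ref{d-1} is formulated for a unitary Fr\'echet algebra, but a direct inspection of its proof shows that only the identities $f(r_{0})\circ g(r_{0})=\Id_{H^{s+2}(\Gamma)/\R}$ and $g(r_{0})\circ f(r_{0})=\Id_{H^s_{*}(\Gamma)}$ are used, together with the telescoping trick
$$f(r_{0}+t\xi)-f(r_{0})=f(r_{0})\circ\bigl[g(r_{0})-g(r_{0}+t\xi)\bigr]\circ f(r_{0}+t\xi),$$
which makes sense when source and target differ. So the argument goes through verbatim, giving
$$\frac{\partial f}{\partial r}[r_{0},\xi]=-f(r_{0})\circ\frac{\partial g}{\partial r}[r_{0},\xi]\circ f(r_{0}),$$
and the $\mathscr{C}^{\infty}$-G\^ateaux differentiability follows by induction on the order of differentiation, as in Lemma \ref{d-1}.

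The only genuine obstacle is the continuity hypothesis on $r\mapsto f(r)$ required to apply the adapted lemma. I would establish it by a Neumann-series argument at each $r_{0}$: writing $g(r)=g(r_{0})\bigl[\Id+g(r_{0})^{-1}(g(r)-g(r_{0}))\bigr]$, the continuity of $g$ at $r_{0}$ ensures that $g(r_{0})^{-1}(g(r)-g(r_{0}))$ has operator norm strictly less than $1$ on $H^{s+2}(\Gamma)/\R$ for $r$ close enough to $r_{0}$, so the bracket is invertible by a convergent Neumann series, its inverse depending continuously on $r$. This yields the continuity of $r\mapsto f(r)=[\Id+g(r_{0})^{-1}(g(r)-g(r_{0}))]^{-1}\circ f(r_{0})$ and completes the proof.
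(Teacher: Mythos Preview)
Your argument is essentially the same as the paper's: recognise $f(r)=\tau_{r}\Delta_{\Gamma_{r}}^{-1}\tau_{r}^{-1}J_{r}^{-1}$ as the inverse of $g(r)=J_{r}\tau_{r}\Delta_{\Gamma_{r}}\tau_{r}^{-1}$, invoke Lemmas~\ref{J} and~\ref{delta} for the differentiability of $g$, and then appeal to Lemma~\ref{d-1}. You are in fact more careful than the paper on two points it leaves implicit: that Lemma~\ref{d-1} is stated for a unitary Fr\'echet algebra whereas here source and target differ (you correctly observe the proof transports verbatim), and the continuity of $r\mapsto f(r)$, which you justify by a Neumann-series perturbation of $g(r_{0})$.
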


\begin{proof}We have seen in section \ref{BoundIntOp}, that the Laplace-Beltrami operator is invertible  from $H^{s+2}(\Gamma_{r})\slash\R$ to $H^s_{*}(\Gamma_{r})$.  As a consequence $J_{r}\tau_{r}\Delta_{\Gamma_{r}}\tau_{r}^{-1}$ is invertible from $H^{s+2}(\Gamma)\slash\R$ to $H^s_{*}(\Gamma)$. By lemma \ref{d-1} we deduce that $r\mapsto\tau_{r}\Delta_{\Gamma_{r}}^{-1}\tau_{r}^{-1}J_{r}^{-1}$ is $\mathscr{C}^{\infty}$-G\^ateaux  differentiable and that we have
$$\frac{\partial\;\tau_{r}\Delta_{\Gamma_{r}}^{-1}\tau_{r}^{-1}J_{r}^{-1}}{\partial r}[0,\xi]=\;-\Delta_{\Gamma}^{-1}\circ\left(\frac{\partial\;J_{r}\tau_{r}\Delta_{\Gamma_{r}}\tau_{r}^{-1}}{\partial r}[0,\xi]\right)\circ\Delta_{\Gamma}^{-1}.$$   \end{proof}

Now we have all the tools to establish the differentiability properties of the electromagnetic boundary integral operators and then of the solution to the dielectric scattering problem.

 \subsection{Shape derivatives of the solution to the  dielectric  problem}
  For more simplicity in the  writing we use the following notations :
$$\Psi_{E_{\kappa}}(r)=\Psi_{E_{\kappa}}^r\Pp_{r}^{-1},\;\Psi_{M_{\kappa}}(r)=\Psi_{M_{\kappa}}^r\Pp_{r}^{-1},\;C_{\kappa}(r)=\Pp_{r}C^r_{{\kappa}}\Pp_{r}^{-1},\text{ et }M_{\kappa}(r)=\Pp_{r}M^r_{{\kappa}}\Pp_{r}^{-1}.$$
\begin{theorem}\label{thpsi} The applications  $$\begin{array}{rcl}B_{\epsilon}^{\infty}&\rightarrow& \mathscr{L}_{c}(\TT\HH^{-\frac{1}{2}}(\Div_{\Gamma},\Gamma), \HH(\Rot,K_{p}))\\r&\mapsto&\Psi_{E_{\kappa}}(r)\\r&\mapsto&\Psi_{M_{\kappa}}(r)\end{array}$$ are infinitely G\^ateaux differentiable.
Moreover, their first derivative at $r=0$ can be extended in linear an bounded operators from $\TT\HH^{\frac{1}{2}}(\Div_{\Gamma},\Gamma)$ to $\HH(\Rot,\Omega)$ and $\HH_{loc}(\Rot,\Omega^c)$ and given  $\jj\in\TT\HH^{\frac{1}{2}}(\Div_{\Gamma},\Gamma)$ the potentials $\dfrac{\partial\Psi_{E_{\kappa}}}{\partial r}[0,\xi]\jj$ et $\dfrac{\partial\Psi_{M_{\kappa}}}{\partial r}[0,\xi]\jj$ satisfy the Maxwell's equations 
$$\Rot\Rot\uu-\kappa^2\uu=0$$
in $\Omega$ and $\Omega^c$ and the Silver-M\"uller condition. 
\end{theorem}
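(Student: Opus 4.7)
The strategy is to reduce both $\Psi_{E_\kappa}(r)$ and $\Psi_{M_\kappa}(r)$ to compositions of two kinds of operators whose $\mathscr{C}^\infty$-G\^ateaux differentiability has already been established in this paper: the scalar single layer $\psi_\kappa^r$ pulled back by $\tau_r^{-1}$, and the pulled-back surface differential operators on the fixed boundary $\Gamma$. Using the Helmholtz decomposition $\jj=\nabla_\Gamma p + \Rot_\Gamma q$ with the identities $\Div_{\Gamma_r}\Rot_{\Gamma_r}=0$ and $\Div_{\Gamma_r}\nabla_{\Gamma_r}=\Delta_{\Gamma_r}$, the representation displayed just before the theorem reads
\begin{equation*}
\Psi_{E_\kappa}(r)\jj = \kappa\,\psi_\kappa^r\tau_r^{-1}\bigl[(\tau_r\nabla_{\Gamma_r}\tau_r^{-1})p+(\tau_r\Rot_{\Gamma_r}\tau_r^{-1})q\bigr] - \kappa^{-1}\nabla^x\bigl(\psi_\kappa^r\tau_r^{-1}(\tau_r\Delta_{\Gamma_r}\tau_r^{-1})p\bigr),
\end{equation*}
and similarly for $\Psi_{M_\kappa}(r)$ with $\Rot^x$ replacing $\nabla^x$. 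Each factor on the right hand side is one of the maps whose infinite G\^ateaux differentiability has been proved: $r\mapsto\psi_\kappa^r\tau_r^{-1}$ by Example \ref{psi} and Theorem \ref{P'DF}, and $r\mapsto\tau_r\nabla_{\Gamma_r}\tau_r^{-1},\tau_r\Rot_{\Gamma_r}\tau_r^{-1},\tau_r\Delta_{\Gamma_r}\tau_r^{-1}$ by Lemmas \ref{nabla}, \ref{rot2}, \ref{delta}.

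Next, I invoke the chain and product rules for $\mathscr{C}^\infty$-G\^ateaux differentiable maps between Fr\'echet spaces (recalled from \cite{Schwartz} in Section \ref{ShapeD}): the composition of $\mathscr{C}^\infty$-differentiable maps whose source/target spaces match is $\mathscr{C}^\infty$-differentiable. The outer operators $\nabla^x$ and $\Rot^x$ act on a potential that is $\mathscr{C}^\infty$ on any $K_p\subset\R^3\setminus\Gamma$, so they commute with the $r$-derivation. This gives the claimed infinite G\^ateaux differentiability as maps into $\mathscr{L}_c(\TT\HH^{-1/2}(\Div_\Gamma,\Gamma),\HH(\Rot,K_p))$.

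For the extension property of the first derivative, the key input is the remark following Theorem \ref{P'DF}: at each differentiation, a potential operator of pseudo-homogeneous class $-(m+1)$ produces one of class $-m$, i.e.\ loses exactly one unit of smoothing. Using the explicit derivatives of the surface differential operators (Lemmas \ref{nabla}, \ref{rot2}, \ref{delta}) together with the derivative of $\psi_\kappa^r\tau_r^{-1}$ from Example \ref{psi}, I read off the first derivative of $\Psi_{E_\kappa}(r)$ and $\Psi_{M_\kappa}(r)$ at $r=0$ as a finite sum of terms each of which is either a single layer potential acting on a density or its gradient/curl, composed with one extra surface derivative. Raising the regularity of the Helmholtz potentials of $\jj$ by one order, i.e.\ taking $(p,q)\in(H^{5/2}/\R)\times(H^{3/2}/\R)$, compensates exactly this loss; this is equivalent to $\jj\in\TT\HH^{1/2}(\Div_\Gamma,\Gamma)$, and the derivatives then extend to bounded operators into $\HH(\Rot,\Omega)$ and $\HH_{\loc}(\Rot,\Omega^c)$ by the mapping properties of $\psi_\kappa$ from Lemma \ref{3.1}.

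For Maxwell's equations and the Silver--M\"uller condition on the derivative, I use that both hold uniformly in $r$. For every $r\in B_\epsilon^\infty$ Lemma \ref{3.2} gives $(\Rot\Rot-\kappa^2\Id)\Psi_{E_\kappa}(r)\jj=0$ in $\R^3\setminus\Gamma_r$; for fixed $K_p\subset\R^3\setminus\Gamma$, shrinking $\epsilon$ ensures $K_p\subset\R^3\setminus\Gamma_r$ for all admissible $r$, so the $r$-independent operator $\Rot\Rot-\kappa^2\Id$ commutes with $\tfrac{\partial}{\partial r}\big|_{0}$, yielding $(\Rot\Rot-\kappa^2\Id)\tfrac{\partial\Psi_{E_\kappa}}{\partial r}[0,\xi]\jj=0$ on each $K_p$, hence on $\Omega$ and $\Omega^c$. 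For the radiation condition, I differentiate at $r=0$ the analogous representation of the far field $\Psi_{E_\kappa}^{\infty,r}\Pp_r^{-1}\jj$, whose infinite differentiability is obtained by the same reduction (passing to the limit $|x|\to\infty$ and $\tfrac{\partial}{\partial r}$ commute since the integrand is analytic in $x$ on $K_p$ for large $|x|$); uniqueness for the exterior Maxwell problem with Silver--M\"uller condition then forces the derivative to satisfy the radiation condition.

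The main obstacle is paragraph three: accounting precisely for the single unit of regularity that disappears at each differentiation of a potential, and seeing that exactly one additional derivative of the Helmholtz components of $\jj$ is both necessary and sufficient to preserve boundedness into $\HH(\Rot,\Omega)\cup\HH_{\loc}(\Rot,\Omega^c)$.
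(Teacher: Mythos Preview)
Your overall strategy---reduce $\Psi_{E_\kappa}(r)$ and $\Psi_{M_\kappa}(r)$ to compositions of the pulled-back single layer $\psi_\kappa^r\tau_r^{-1}$ with the pulled-back surface operators $\tau_r\nabla_{\Gamma_r}\tau_r^{-1}$, $\tau_r\Rot_{\Gamma_r}\tau_r^{-1}$, $\tau_r\Delta_{\Gamma_r}\tau_r^{-1}$, then invoke the chain and product rules---is exactly the paper's approach for the first claim (infinite G\^ateaux differentiability into $\HH(\Rot,K_p)$).

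For the second part (extension to $\HH(\Rot,\Omega)\cup\HH_{\loc}(\Rot,\Omega^c)$, Maxwell's equations, Silver--M\"uller), you take a different route from the paper. The paper first observes only that the derivatives land in $\LL^2(\Omega)\cup\LL^2_{\loc}(\Omega^c)$, and then uses the \emph{cross identities}
\[
\Rot\,\Psi_{E_\kappa}(r)=\kappa\,\Psi_{M_\kappa}(r),\qquad \Rot\,\Psi_{M_\kappa}(r)=\kappa\,\Psi_{E_\kappa}(r),
\]
valid for all $r$. Differentiating these at $r=0$ (the exterior $\Rot$ commutes with $\partial/\partial r$ away from $\Gamma$) shows in one stroke that (i) the derivatives are in $\HH(\Rot)$, since the curl of each is $\kappa$ times the other and both are already in $\LL^2$; (ii) Maxwell's equations hold, because iterating gives $\Rot\Rot\,\partial_r\Psi_{E_\kappa}=\kappa^2\,\partial_r\Psi_{E_\kappa}$; and (iii) the Silver--M\"uller condition is inherited. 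Your argument instead counts regularity term by term and separately differentiates the identity $(\Rot\Rot-\kappa^2\Id)\Psi_{E_\kappa}(r)=0$. That is correct in spirit, but your appeal to ``the mapping properties of $\psi_\kappa$ from Lemma~\ref{3.1}'' does not by itself yield $\HH(\Rot)$: the worst term in $\partial_r\Psi_{E_\kappa}[0,\xi]$ is $\nabla^x$ of the derivative of $\psi_\kappa^r\tau_r^{-1}$ applied to $\Delta_\Gamma p$, which is only in $\LL^2$, and you must add the observation $\Rot\nabla=0$ to conclude. The paper's cross-identity argument sidesteps this bookkeeping entirely and gives the three properties simultaneously; your approach works but is longer and requires that extra remark to close the $\HH(\Rot)$ claim.
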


\begin{proof} Let $\jj\in\TT\HH^{-\frac{1}{2}}(\Div_{\Gamma},\Gamma)$ and $\nabla_{\Gamma}p+\Rot_{\Gamma}q$ its Helmholtz decomposition. Recall that $\Psi_{E_{\kappa}}(r)\jj$ and $\Psi_{M_{\kappa}}(r)\jj$ can be written: 
\begin{equation*}\begin{split}\Psi_{E_{\kappa}}(r)\jj=&\kappa \Psi^{r}_{\kappa}\tau_{r}^{-1}(\tau_{r}\mathbf{P_{r}}^{-1}\jj)-\kappa^{-1}\nabla \Psi^{r}_{\kappa}\tau_{r}^{-1}(\tau_{r}\Delta_{\Gamma_{r}}\tau_{r}^{-1}p),\vspace{2mm} \\\Psi_{E_{\kappa}}(r)\jj=&\rot\psi_{\kappa}^{r}\tau_{r}^{-1}(\tau_{r}\mathbf{P_{r}}^{-1}\jj).\end{split}\end{equation*}

By composition of differentiable applications, we deduce that $r\mapsto\Psi_{E_{\kappa}}(r)$ and $r\mapsto~\Psi_{M_{\kappa}}(r)$ are infinitely G\^ateaux differentiable far from the boundary and that their first derivatives are continuous from $\TT\HH^{\frac{1}{2}}(\Div_{\Gamma},\Gamma)$ to $\LL^2(\Omega)\cup \LL^2_{loc}(\Omega^c)$. Recall that we have,
$$\rot\Psi_{E_{\kappa}}(r)\jj=\kappa\Psi_{M_{\kappa}}(r)\jj\text{ and }\rot\Psi_{M_{\kappa}}(r)\jj=\kappa\Psi_{E_{\kappa}}(r)\jj.$$ Far from the boundary  we can invert the differentiation with respect to $x$ and the derivation with respect to $r$ and it gives: $$\Rot\frac{\partial \Psi_{E_{\kappa}}}{\partial r}[0,\xi]\jj=\kappa\frac{\partial\Psi_{M_{\kappa}}}{\partial r}[0,\xi]\jj\text{ et }\Rot\frac{\partial \Psi_{M_{\kappa}}}{\partial r}[0,\xi]\jj=\kappa\frac{\partial\Psi}{\partial r}[0,\xi]\jj.$$
It follows that $\dfrac{\partial \Psi_{E_{\kappa}}}{\partial r}[0,\xi]\jj$ and $\dfrac{\partial \Psi_{M_{\kappa}}}{\partial r}[0,\xi]\jj$ are in $\HH(\Rot,\Omega)\cup\HH_{loc}(\Rot,\Omega^c)$  and that  they satisfy the Maxwell equations and the Silver-M\"uller condition.
\end{proof}

 We recall that the operator $C_{\kappa}(r)$ admit the following representation :
\begin{equation}C_{\kappa}(r)=\label{C}\mathbf{P_{r}}C^{r}_{\kappa}\mathbf{P_{r}}^{-1}\jj=\nabla_{\Gamma}P(r)+\Rot_{\Gamma}Q(r),\end{equation} 
where
\begin{equation*}\label{P}\begin{array}{ll}P(r)=-\kappa\;(J_{r}\tau_{r}\Delta_{\Gamma_{r}}\tau_{r}^{-1})^{-1}(J_{r}\tau_{r}\rot_{\Gamma_{r}}\tau_{r}^{-1})\left(\tau_{r}V^{r}_{\kappa}\tau_{r}^{-1}\right)\left[\left(\tau_{r}\nabla_{\Gamma_{r}}\tau_{r}^{-1}p\right)+\left(\tau_{r}\Rot_{\Gamma_{r}} \tau_{r}^{-1}q\right)\right]\end{array}\end{equation*}and $Q(r)=$
\begin{equation*}\label{Q}\begin{array}{ll}\hspace{-2mm}&\hspace{-2mm}-\kappa\;(J_{r}\tau_{r}\Delta_{\Gamma_{r}}\tau_{r}^{-1})^{-1}(J_{r}\tau_{r}\Div_{\Gamma_{r}}\pi^{-1}(r))\pi(r)\left(\tau_{r}V^{r}_{\kappa}\tau_{r}^{-1}\right)\left[\left(\tau_{r}\nabla_{\Gamma_{r}}\tau_{r}^{-1}p\right)+\left(\tau_{r}\Rot_{\Gamma_{r}} \tau_{r}^{-1}q\right)\right]\vspace{3mm}\\&+\kappa^{-1}\left(\tau_{r}V^{r}_{\kappa}\tau_{r}^{-1}\right)\left(\tau_{r}\Delta_{\Gamma_{r}}\tau_{r}^{-1}p\right).\end{array}\end{equation*}

\begin{remark}Let $\jj\in \TT\HH^{s}(\Div_{\Gamma},\Gamma)$ and $\nabla_{\Gamma}\;p+\Rot_{\Gamma}\;q$ its helmholtz decomposition. We  want to derive: $$\begin{array}{ll}\Pp_{r}C^{r}_{\kappa}\Pp_{r}^{-1}\jj&=\Pp_{r}C^{r}_{\kappa}(\nabla_{\Gamma_{r}}\tau_{r}^{-1}p+\Rot_{\Gamma_{r}}\tau_{r}^{-1}q)\\&=\Pp_{r}(\nabla_{\Gamma_{r}}P_{r}+\Rot_{\Gamma_{r}}Q_{r})\\&=\nabla_{\Gamma}P(r)+\Rot_{\Gamma}Q(r).\end{array}$$ We have:$$\frac{\partial \mathbf{P_{r}}C^{r}_{\kappa}\mathbf{P_{r}}^{-1}\jj}{\partial r}[0,\xi]=\nabla_{\Gamma}\frac{\partial P}{\partial r}[0,\xi]+\mathbf{rot}_{\Gamma}\frac{\partial Q}{\partial r}[0,\xi].$$ The derivative with respect to $r$ \`a $r$ de $\mathbf{P_{r}}C^{r}_{\kappa}\mathbf{P_{r}}^{-1}\jj$ is given by the derivatives of the functions $P(r)=\tau_{r}(P_{r})$ and of $Q(r)=\tau_{r}(Q_{r})$.\newline
We also have $$\frac{\partial \; \pi(r)f(r)}{\partial r}[0,\xi]=\pi(0)\frac{\partial \; f(r)}{\partial r}[0,\xi]$$

\end{remark}
By composition of infinite differentiable applications we obtain the 
\begin{theorem} The application: 
$$\begin{array}{lcl}B_{\epsilon}^{\infty}&\rightarrow &\mathscr{L}_{c}\left(\TT\HH^{-\frac{1}{2}}(\Div_{\Gamma},\Gamma),\TT\HH^{-\frac{1}{2}}(\Div_{\Gamma},\Gamma)\right)\\r&\mapsto&\Pp_{r}C^{r}_{\kappa}\Pp_{r}^{-1}\end{array}$$ is infinitely G\^ateaux differentiable.\end{theorem}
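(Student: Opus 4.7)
The plan is to reduce differentiability of $\Pp_r C^r_\kappa \Pp_r^{-1}$ to differentiability of its two Helmholtz potentials $P(r)$ and $Q(r)$ displayed just before the statement, and then to express each of those as a composition of building blocks whose $\mathscr{C}^\infty$-G\^ateaux differentiability has already been established. Since the operators $\nabla_\Gamma$ and $\Rot_\Gamma$ are $r$-independent and continuous into $\TT\HH^{-\frac{1}{2}}(\Div_\Gamma,\Gamma)$, it will suffice to prove that $r\mapsto P(r)\in H^{\frac{3}{2}}(\Gamma)/\R$ and $r\mapsto Q(r)\in H^{\frac{1}{2}}(\Gamma)/\R$ are $\mathscr{C}^\infty$-G\^ateaux differentiable as maps of $\jj=\nabla_\Gamma p+\Rot_\Gamma q$.

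First I would enumerate the factors appearing in the explicit formulas for $P(r)$ and $Q(r)$: the inner surface differential operators $\tau_r\nabla_{\Gamma_r}\tau_r^{-1}$ and $\tau_r\Rot_{\Gamma_r}\tau_r^{-1}$ (Lemmas \ref{nabla} and \ref{rot2}); the single-layer boundary operator $\tau_r V^r_\kappa \tau_r^{-1}$ (Example \ref{Vk}); the outer Jacobian-weighted surface divergence $J_r\tau_r\Div_{\Gamma_r}\pi^{-1}(r)$ and surface scalar curl $J_r\tau_r\rot_{\Gamma_r}\tau_r^{-1}$, whose $\mathscr{C}^\infty$-G\^ateaux differentiability was obtained in the lemma preceding the statement; multiplication by the scalar Jacobian $J_r$ (Lemma \ref{J}) and by the projector $\pi(r)$, the latter being a rational function of $\D r$ and $\tau_r\nn_r$ and hence smooth by Lemma \ref{N}; and finally the inverse $(J_r\tau_r\Delta_{\Gamma_r}\tau_r^{-1})^{-1}$ supplied by Lemma \ref{delta-1}.

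The second task is Sobolev index bookkeeping. Starting from $\jj\in\TT\HH^{-\frac{1}{2}}(\Div_\Gamma,\Gamma)$ with $p\in H^{\frac{3}{2}}(\Gamma)/\R$ and $q\in H^{\frac{1}{2}}(\Gamma)/\R$, I would follow the regularity through each composition: the inner surface operators produce a tangential field of regularity $\TT\HH^{\frac{1}{2}}$; the single layer $V^r_\kappa$ gains one half-derivative and the outer surface divergence (or scalar curl) produces a function in $H^{\frac{1}{2}}_*(\Gamma)$, which is exactly the domain on which the Jacobian-weighted Laplace-Beltrami inverse is defined and takes values in $H^{\frac{5}{2}}(\Gamma)/\R$. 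The role of the Jacobian weights is precisely that $u_r\in H^s_*(\Gamma_r)$ if and only if $J_r\tau_r u_r\in H^s_*(\Gamma)$, so that the inverse Laplace-Beltrami can legitimately be transported to the fixed surface $\Gamma$ at every value of $r$.

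Once the composition chain is laid out correctly, I would apply the Fr\'echet-space chain and product rules, together with Lemma \ref{d-1} for the invertible factor, to conclude that $r\mapsto P(r)$ and $r\mapsto Q(r)$ are $\mathscr{C}^\infty$-G\^ateaux differentiable into the respective scalar Sobolev quotient spaces, and hence that $r\mapsto\nabla_\Gamma P(r)+\Rot_\Gamma Q(r)$ is $\mathscr{C}^\infty$-G\^ateaux differentiable into $\TT\HH^{-\frac{1}{2}}(\Div_\Gamma,\Gamma)$. The main obstacle is this index bookkeeping: at every stage, and in particular after each successive derivation, one must check that the scalar output still sits in $H^s_*(\Gamma)$ so that the Laplace-Beltrami inverse remains well defined. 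Once that discipline is observed, the proof becomes a routine application of the differential calculus in Fr\'echet spaces recalled in Section \ref{ShapeD}.
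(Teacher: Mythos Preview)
Your proposal is correct and follows exactly the paper's approach: the paper's entire proof is the single sentence ``By composition of infinite differentiable applications we obtain the [theorem],'' and you have simply spelled out which compositions are meant and which earlier lemmas supply each factor. Two small arithmetic slips in your bookkeeping are worth correcting but do not affect the argument: the single layer $V_\kappa$ gains a full derivative (class $-1$), not a half-derivative, and along the $\Rot_\Gamma q$ branch the scalar potential $P(r)$ lands only in $H^{3/2}(\Gamma)/\R$, not $H^{5/2}$ --- which is precisely the regularity required.
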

Recall that the operator $\Pp_{r}M^{r}_{\kappa}\mathbf{P_{r}}^{-1}$ admit the following representation :
\begin{equation*}\label{M}\mathbf{P_{r}}M^{r}_{\kappa}\mathbf{P_{r}}^{-1}\jj=\nabla_{\Gamma}P'(r)+\Rot_{\Gamma}Q'(r),\end{equation*}where
\begin{equation*}\label{P'}\begin{array}{rl}P'(r)=&\left(J_{r}\tau_{r}\Delta_{\Gamma_{r}}\tau_{r}^{-1}\right)^{-1}(\kappa^2J_{r}\tau_{r}\nn_{r}\cdot (\tau_{r}V^r_{\kappa}\tau_{r}^{-1})\left[\left(\tau_{r}\nabla_{\Gamma_{r}}\tau_{r}^{-1}p\right)+\left(\tau_{r}\Rot_{\Gamma_{r}} \tau_{r}^{-1}q\right)\right]\vspace{2mm}\\& +\left(J_{r}\tau_{r}\Delta_{\Gamma_{r}}\tau_{r}^{-1}\right)^{-1}(J_{r}\tau_{r}D_{\kappa}^{r}\tau_{r}^{-1})(\tau_{r}\Delta_{\Gamma_{r}}\tau_{r}^{-1}p)\end{array}\end{equation*}and
 $Q'(r)=$
\begin{equation*}\label{Q'}\begin{array}{ll}\left(J_{r}\tau_{r}\Delta_{\Gamma_{r}}\tau_{r}^{-1}\right)^{-1}(J_{r}\tau_{r}\rot_{\Gamma_{r}}\tau_{r}^{-1})(\tau_{r}(B_{\kappa}^{r}-D_{\kappa}^{r})\tau_{r}^{-1})\left[\left(\tau_{r}\nabla_{\Gamma_{r}}\tau_{r}^{-1}p\right)+\left(\tau_{r}\Rot_{\Gamma_{r}} \tau_{r}^{-1}q\right)\right]\end{array}\end{equation*}
with
\begin{equation*}\begin{array}{ll}\tau_{r}B_{k}^r\Pp_{r}^{-1}\jj=&\tau_{r}\left\{\displaystyle{\int_{\Gamma_{r}}\nabla G(\kappa,|\cdot-y_{r}|)\left(\nn_{r}(\,\cdot\,)\cdot(\nabla_{\Gamma_{r}}\tau_{r}^{-1} p)(y_{r})\right)d\sigma(y_{r})}\right. \vspace{2mm}\\&\hspace{5mm}+\left.\displaystyle{\int_{\Gamma_{r}}\nabla G(\kappa,|\cdot-y_{r}|)\left(\nn_{r}(\,\cdot\,)\cdot(\Rot_{\Gamma_{r}}\tau_{r}^{-1} q)(y_{r})\right)d\sigma(y_{r})}\Big\}\right\}.\end{array}\end{equation*}
\newline

\begin{theorem}The application:
$$\begin{array}{lcl}B_{\epsilon}^{\infty}&\rightarrow& \mathscr{L}_{c}\left(\TT\HH^{-\frac{1}{2}}(\Div_{\Gamma},\Gamma),\TT\HH^{-\frac{1}{2}}(\Div_{\Gamma},\Gamma)\right)\\r&\mapsto&\Pp_{r}M_{\kappa}\Pp_{r}^{-1}\end{array}$$ is infinitely G\^ateaux differentiable and the G\^ateaux derivatives have the same regularity than $M_{\kappa}$ so that it is compact.\end{theorem}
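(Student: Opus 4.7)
The plan is to exploit the explicit decomposition of $\Pp_r M_\kappa^r \Pp_r^{-1}$ as $\nabla_\Gamma P'(r) + \Rot_\Gamma Q'(r)$ already exhibited before the statement, and to reduce the infinite Gâteaux differentiability to that of the scalar building blocks all established in Section~\ref{GDiffPH} and in the preceding lemmas of this section. Concretely, $P'(r)$ and $Q'(r)$ are finite compositions (and sums of products) of the following applications: the Jacobian $r\mapsto J_r$ (Lemma~\ref{J}), the normal $r\mapsto \tau_r\nn_r$ (Lemma~\ref{N}), the surface differential operators $r\mapsto \tau_r\nabla_{\Gamma_r}\tau_r^{-1}$, $r\mapsto \tau_r\Rot_{\Gamma_r}\tau_r^{-1}$, $r\mapsto \tau_r\Div_{\Gamma_r}\tau_r^{-1}$, $r\mapsto \tau_r\rot_{\Gamma_r}\tau_r^{-1}$, $r\mapsto \tau_r\Delta_{\Gamma_r}\tau_r^{-1}$ (Lemmas~\ref{nabla}--\ref{delta}), the inverse Laplace-Beltrami $r\mapsto \tau_r\Delta_{\Gamma_r}^{-1}\tau_r^{-1}J_r^{-1}$ (Lemma~\ref{delta-1}), and the transported boundary integral operators $r\mapsto \tau_r V_\kappa^r\tau_r^{-1}$, $r\mapsto \tau_r D_\kappa^r\tau_r^{-1}$, $r\mapsto \tau_r B_\kappa^r\tau_r^{-1}$ (Theorem~\ref{PDF} and Examples~\ref{Vk},~\ref{Dk}; the operator $B_\kappa^r$ fits in the same framework since its kernel is pseudo-homogeneous of class $-1$ as soon as one uses the differentiability of $r\mapsto\tau_r\nn_r$).

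The first step is to verify that each of these applications is $\mathscr{C}^\infty$-Gâteaux differentiable between the appropriate Sobolev spaces entering the expressions for $P'(r)$ and $Q'(r)$. The second step is to read off from these expressions that $P'(r)$ is the value at $r$ of a composition of such infinitely differentiable maps acting between function spaces whose indices match up (the boundary integral factors gain one order of Sobolev regularity, which is precisely what is needed to accommodate the one order loss incurred by the surface differential operators), and similarly for $Q'(r)$. Invoking the chain and product rules (valid in Fréchet spaces, cf.\ the paragraph preceding Lemma~\ref{d-1}) together with Lemma~\ref{d-1} for the factors $(J_r\tau_r\Delta_{\Gamma_r}\tau_r^{-1})^{-1}$, one deduces the infinite Gâteaux differentiability of $r\mapsto P'(r)$ and $r\mapsto Q'(r)$ with values in $H^{\frac{3}{2}}(\Gamma)/\R$ and $H^{\frac{1}{2}}(\Gamma)/\R$ respectively, hence of $r\mapsto \Pp_r M_\kappa^r \Pp_r^{-1}$ with values in $\mathscr{L}_c\bigl(\TT\HH^{-\frac{1}{2}}(\Div_\Gamma,\Gamma), \TT\HH^{-\frac{1}{2}}(\Div_\Gamma,\Gamma)\bigr)$.

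For the regularity statement, the point is that all the boundary integral operators appearing in $P'(r)$ and $Q'(r)$ have pseudo-homogeneous kernels of class $-1$, and by Theorem~\ref{PDF} their Gâteaux derivatives at any $r_0\in B_\epsilon^\infty$ are again integral operators with pseudo-homogeneous kernels of class $-1$, \emph{without loss of regularity}. All remaining factors (the surface differential operators, $(J_r\tau_r\Delta_{\Gamma_r}\tau_r^{-1})^{-1}$, the normal and Jacobian) and their derivatives act with the same mapping properties at $r$ and at $0$. Consequently, applying Leibniz to any order, each term of $\tfrac{\partial^k}{\partial r^k}\{\Pp_r M_\kappa^r \Pp_r^{-1}\}$ is structurally of the same form as $M_\kappa$ itself: a sum $\nabla_\Gamma P'_k + \Rot_\Gamma Q'_k$ in which the boundary integral factors still gain one order of regularity. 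This yields the continuity of every derivative from $\TT\HH^{-\frac{1}{2}}(\Div_\Gamma,\Gamma)$ to $\TT\HH^{\frac{1}{2}}(\Div_\Gamma,\Gamma)$ and hence compactness on $\TT\HH^{-\frac{1}{2}}(\Div_\Gamma,\Gamma)$.

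The main obstacle is bookkeeping: one has to check carefully that, in every term produced by the chain and product rules, the ranges and domains of the successive factors match, in particular that the mean-value conditions required to apply $(J_r\tau_r\Delta_{\Gamma_r}\tau_r^{-1})^{-1}$ are preserved after differentiation (this is where the $J_r$-weighted formulation of $\rot_{\Gamma_r}$ and $\Div_{\Gamma_r}$ recalled before Lemma~\ref{delta-1} is essential, since it is precisely this reformulation that makes $\int_\Gamma J_r\tau_r(\rot_{\Gamma_r}\tau_r^{-1}\uu)\,d\sigma\equiv 0$ and thus allows differentiation within the space $H^s_*(\Gamma)$). Once this matching is taken care of, the conclusion reduces to a mechanical assembly of the preceding lemmas.
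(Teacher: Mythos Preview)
Your overall strategy coincides with the paper's: reduce to the scalar building blocks and apply the chain and product rules. The paper's proof, however, singles out one piece that is \emph{not} covered by the lemmas you invoke, and your parenthetical treatment of it is the weak spot.

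The operator $B_\kappa^r$ has kernel $\nabla^x G(\kappa,|x-y|)\,\bigl(\jj(y)\cdot\nn(x)\bigr)$, and $\nabla G$ is pseudo-homogeneous of class $0$, not $-1$. The reason $B_\kappa^r$ nonetheless gains one order on tangential densities is the cancellation
\[
\jj_r(y_r)\cdot\nn_r(x_r)=\jj_r(y_r)\cdot\bigl(\nn_r(x_r)-\nn_r(y_r)\bigr),
\]
which produces an extra factor of order $|x-y|$. This is a \emph{spatial} cancellation (smoothness of $\nn_r$ in $x$), not a consequence of the $r$-differentiability of $r\mapsto\tau_r\nn_r$ as you write. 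What actually has to be checked---and what the paper's proof isolates as ``it remains to prove''---is that this cancellation survives differentiation in $r$: the G\^ateaux derivatives of
\[
r\longmapsto (\tau_r\nn_r)(x)\cdot\bigl(\tau_r\nabla_{\Gamma_r}\tau_r^{-1}p\bigr)(y)
\]
must still vanish to the required order as $x\to y$, so that the derived kernels remain of class $-1$. The paper does this by rewriting the factor as $\bigl((\tau_r\nn_r)(x)-(\tau_r\nn_r)(y)\bigr)\cdot(\tau_r\nabla_{\Gamma_r}\tau_r^{-1}p)(y)$ and then differentiating, or equivalently by combining Lemmas~\ref{N} and~\ref{nabla}. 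Once this is verified, Theorem~\ref{PDF} applies to $r\mapsto\tau_r B_\kappa^r\Pp_r^{-1}$ and the rest of your argument goes through unchanged. Without this step, your claim that ``$B_\kappa^r$ fits in the same framework'' is precisely the assertion to be proved.
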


\begin{proof} By composition of infinite differentiable applications it remains to prove the infinite G\^ateaux differentiability of the application 
$$\begin{array}{lcl}B_{\delta}&\rightarrow& \mathscr{L}_{c}\left(\TT\HH^{-\frac{1}{2}}(\Div_{\Gamma},\Gamma),\HH^{\frac{1}{2}}(\Gamma)\right)\\r&\mapsto&\tau_{r}B^r_{\kappa}\Pp_{r}^{-1}.\end{array}$$
The function $(x,y-x)\mapsto \nabla G(\kappa,|x-y|)$ is pseudo-homogeneous of class 0. We then have to prove that for any fixed $(x,y)\in(\Gamma\times\Gamma)^*$ and any function $p\in H^{\frac{3}{2}}(\Gamma)$ the G\^ateaux derivatives of
$$r\mapsto (\tau_{r}\nn_{r})(x)\cdot\left(\tau_{r}\nabla_{\Gamma_{r}}\tau_{r}^{-1}p\right)(y)$$ behave as $|x-y|^2$ when $x-y$ tends to zero. To do so, either we write 
$$(\tau_{r}\nn_{r})(x)\cdot\left(\tau_{r}\nabla_{\Gamma_{r}}\tau_{r}^{-1}p\right)(y)=\left((\tau_{r}\nn_{r})(x)-(\tau_{r}\nn_{r})(y)\right)\cdot\left(\tau_{r}\nabla_{\Gamma_{r}}\tau_{r}^{-1}p\right)(y)$$
or we use lemmas \ref{N} and \ref{G}.
\end{proof}

\begin{theorem} \label{shapederiv}Assume that :\newline
1) $\EE^{inc}\in\HH^1_{loc}(\Rot,\R^3)$ and \newline
2)  the applications
$$\begin{array}{rcl}B_{\epsilon}^{\infty}&\rightarrow&\TT\HH^{-\frac{1}{2}}(\Div_{\Gamma},\Gamma)\\r&\mapsto&\Pp_{r}\left(\nn_{r}\times\EE^{inc}_{|\Gamma_{r}}\right)\\r&\mapsto& \Pp_{r}\left(\nn_{r}\times\left(\Rot\EE^{inc}\right)_{|{\Gamma_{r}}}\right)\end{array}$$ are G\^ateaux differentiable at $r=0$. Then the application mapping $r$ onto the solution $\mathscr{E}(r)=\EE(\Omega_{r})\in\HH(\Rot,\Omega)\cup\HH_{loc}(\Rot,\Omega^c)$ to the scattering problem the obstacle $\Omega_{r}$ is G\^ateaux differentiable at $r=0$.
\end{theorem}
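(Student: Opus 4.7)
The plan is to reduce the shape derivative of the solution to the shape derivative of the density $\jj_r$ in the integral representation, and then to that of the potential operators already studied in Theorem~\ref{thpsi}. Since $\jj_r$ lives in the $r$-dependent space $\TT\HH^{-\frac12}(\Div_{\Gamma_r},\Gamma_r)$, the first move is to insert the isomorphism $\Pp_r^{-1}\Pp_r = \Id$ between each operator in the representations \eqref{risr1}--\eqref{risr2} so that everything is rewritten in terms of the families \eqref{reformulation} acting on the fixed space $\TT\HH^{-\frac12}(\Div_\Gamma,\Gamma)$. Setting $\tilde\jj(r)=\Pp_r\jj_r$, the boundary integral equation becomes
\begin{equation*}
 \widetilde\SS(r)\,\tilde\jj(r)\;=\;-\rho\bigl(-\tfrac12\Id+M_{\kappa_i}(r)\bigr)\Pp_r\bigl(\nn_r\times\EE^{inc}_{|\Gamma_r}\bigr)+\kappa_e^{-1}C_{\kappa_i}(r)\,\Pp_r\bigl(\nn_r\times\Rot\EE^{inc}_{|\Gamma_r}\bigr),
\end{equation*}
where $\widetilde\SS(r)=\Pp_r\SS^r\Pp_r^{-1}$ is built from $C_{\kappa_e}(r),M_{\kappa_e}(r),C_{\kappa_i}(r),M_{\kappa_i}(r)$ and $C_0^*(r)=\Pp_rC_0^{*r}\Pp_r^{-1}$.

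Next I would collect the G\^ateaux differentiability of each building block. The two preceding theorems yield that $r\mapsto C_\kappa(r)$ and $r\mapsto M_\kappa(r)$ are $\mathscr{C}^\infty$-G\^ateaux differentiable as maps $B_\epsilon^\infty\to\mathscr{L}_c(\TT\HH^{-\frac12}(\Div_\Gamma,\Gamma))$, and the same argument, specialized to $\kappa=0$, applies to $C_0^*(r)$. Consequently $r\mapsto\widetilde\SS(r)$ is G\^ateaux differentiable. The operator $\widetilde\SS(0)=\SS$ is invertible on $\TT\HH^{-\frac12}(\Div_\Gamma,\Gamma)$ by the results of Section~\ref{BoundIntOp}, and continuity of $r\mapsto\widetilde\SS(r)$ together with the openness of the invertible elements in the Banach algebra $\mathscr{L}_c(\TT\HH^{-\frac12}(\Div_\Gamma,\Gamma))$ yields invertibility, and continuity of the inverse, on a neighbourhood of $r=0$. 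Lemma~\ref{d-1} then provides the G\^ateaux differentiability of $r\mapsto\widetilde\SS(r)^{-1}$. Coupling this with assumption~(2), which gives the G\^ateaux differentiability at $r=0$ of the right-hand side, yields by the product rule the G\^ateaux differentiability of $r\mapsto\tilde\jj(r)$ at $r=0$, with derivative
\begin{equation*}
\frac{\partial\tilde\jj}{\partial r}[0,\xi]\;=\;-\SS^{-1}\Bigl(\tfrac{\partial\widetilde\SS}{\partial r}[0,\xi]\Bigr)\SS^{-1}\,\mathrm{RHS}(0)\;+\;\SS^{-1}\frac{\partial\,\mathrm{RHS}}{\partial r}[0,\xi].
\end{equation*}

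To conclude, I would plug $\tilde\jj(r)$ into the potential representations. For any compact $K_p\subset\R^3\setminus\Gamma$, the exterior field reads
\begin{equation*}
\mathscr{E}^s(r)_{|K_p}=\bigl(-\Psi_{E_{\kappa_e}}(r)-i\eta\,\Psi_{M_{\kappa_e}}(r)\,C_0^*(r)\bigr)\tilde\jj(r),
\end{equation*}
and by Theorem~\ref{thpsi} each factor $r\mapsto\Psi_{E_{\kappa_e}}(r)$, $r\mapsto\Psi_{M_{\kappa_e}}(r)$ is G\^ateaux differentiable from $B_\epsilon^\infty$ into $\mathscr{L}_c(\TT\HH^{-\frac12}(\Div_\Gamma,\Gamma),\HH(\Rot,K_p))$. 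The chain and product rules yield the G\^ateaux differentiability of $\mathscr{E}^s(r)_{|K_p}$ at $r=0$. An analogous treatment of formula \eqref{risr2} gives the result for $\mathscr{E}^i(r)$ on compact subsets of $\Omega$, using assumption~(1) to ensure that $\gamma_{N_{\kappa_e}}^{c,r}\EE^{inc}$ and $\gamma_D^{c,r}\EE^{inc}$ pulled back by $\Pp_r$ are differentiable in $r$. Exhausting $\Omega$ and $\Omega^c$ by such compact sets $K_p$ and passing to the locally convex limit delivers G\^ateaux differentiability into $\HH(\Rot,\Omega)\cup\HH_{\loc}(\Rot,\Omega^c)$.

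The main obstacle, and the reason for the decomposition and the pull-back by $\Pp_r$, is that a naive differentiation of $\gamma_D^r\EE^{inc}$ or of $M_\kappa^r$ would produce quantities that are no longer tangential to $\Gamma$, hence that no longer belong to $\TT\HH^{-\frac12}(\Div_\Gamma,\Gamma)$; this is precisely what the hypothesis (2) and the use of $\Pp_r$ circumvent. The secondary subtle point is the loss of regularity near the boundary noted after Theorem~\ref{P'DF}: the derivative of the potential operators gains no smoothing at the boundary, so differentiability of $\mathscr{E}(r)$ is only obtained on the open sets $\Omega,\Omega^c$ and not up to $\Gamma$. This is consistent with the statement, which only asserts differentiability in $\HH(\Rot,\Omega)\cup\HH_{\loc}(\Rot,\Omega^c)$ and not on the full space $\HH_{\loc}(\Rot,\R^3)$.
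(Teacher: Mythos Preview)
Your overall strategy --- pull everything back via $\Pp_r$, establish the G\^ateaux differentiability of $\widetilde\SS(r)$ and of its inverse through Lemma~\ref{d-1}, then compose with the differentiable potential families of Theorem~\ref{thpsi} --- coincides with the paper's, which condenses it to the phrase ``by composition of differentiable applications'' and then writes out the derivative of $\mathscr{E}^s$ and $\mathscr{E}^i$ term by term.

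There is, however, a genuine gap in how you deploy hypothesis~(1). You invoke it to make the pulled-back traces $\Pp_r\gamma_D^{c,r}\EE^{inc}$ and $\Pp_r\gamma_{N_{\kappa_e}}^{c,r}\EE^{inc}$ differentiable in $r$; but that is precisely the content of hypothesis~(2). The role of~(1) in the paper is different and indispensable: $\EE^{inc}\in\HH^1_{\loc}(\Rot,\R^3)$ forces the right-hand side of the boundary equation, and therefore the density $\jj=\tilde\jj(0)$, into $\TT\HH^{\frac12}(\Div_\Gamma,\Gamma)$ rather than merely $\TT\HH^{-\frac12}(\Div_\Gamma,\Gamma)$. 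This extra half-order of regularity is exactly what the ``Moreover'' clause of Theorem~\ref{thpsi} requires for $\tfrac{\partial\Psi_{E_\kappa}}{\partial r}[0,\xi]\jj$ and $\tfrac{\partial\Psi_{M_\kappa}}{\partial r}[0,\xi]\jj$ to extend as bounded maps into $\HH(\Rot,\Omega)$ and $\HH_{\loc}(\Rot,\Omega^c)$. Without it, your exhaustion by compacta $K_p\subset\R^3\setminus\Gamma$ controls the derivative only on sets bounded away from $\Gamma$; passing to the limit then gives at best an element of $\HH_{\loc}(\Rot,\Omega)$, not of $\HH(\Rot,\Omega)$, since the latter is an $\LL^2$ norm on all of $\Omega$ including the vicinity of $\Gamma$. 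The same mechanism is used for the interior representation: condition~(1) ensures that $\gamma_D^c(\EE^s+\EE^{inc})$ and $\gamma_{N_{\kappa_e}}^c(\EE^s+\EE^{inc})$ lie in $\TT\HH^{\frac12}(\Div_\Gamma,\Gamma)$, so that $\tfrac{\partial\Psi_{E_{\kappa_i}}}{\partial r}[0,\xi]$ and $\tfrac{\partial\Psi_{M_{\kappa_i}}}{\partial r}[0,\xi]$ applied to them land in $\HH(\Rot,\Omega)$. Your final paragraph misreads this point: $\HH(\Rot,\Omega)$ \emph{does} see the boundary, and the loss-of-regularity phenomenon is compensated not by staying on compacta but by the gain in the density coming from~(1).
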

\begin{proof} By composition of differentiable applications.
We write for the exterior field $\mathscr{E}^s$:

 \begin{equation*}\begin{split}\frac{\partial \mathscr{E}^s}{\partial r}[0,\xi]=&\left(-\frac{\partial \Psi_{E_{\kappa_{e}}}}{\partial r}[0,\xi]-i\eta\frac{\partial\Psi_{M_{\kappa_{e}}}}{\partial r}[0,\xi]C_{0}-i\eta\Psi_{M_{\kappa_{e}}}\frac{\partial C_{0}}{\partial r}[0,\xi]\right)\jj\\
 &+(-\Psi_{E_{\kappa_{e}}}-i\eta\Psi_{M_{\kappa_{e}}}C^*_{0})\SS^{-1}\left(-\frac{\partial \SS}{\partial r}[0,\xi]\jj\right)\\&+(-\Psi_{E_{\kappa_{e}}}-i\eta\Psi_{M_{\kappa_{e}}}C^*_{0})\SS^{-1}\left(-\rho\frac{\partial M_{\kappa_{i}}}{\partial r}[0,\xi]\gamma_{D}\EE^{inc}-\frac{\partial C_{\kappa_{i}}}{\partial r}[0,\xi]\gamma_{N_{\kappa_{e}}}\EE^{inc}\right)\\&+(-\Psi_{E_{\kappa_{e}}}-i\eta\Psi_{M_{\kappa_{e}}}C^*_{0})\SS^{-1}\left(-\rho\left(\frac{1}{2}+M_{\kappa_{i}}\right)\frac{\partial \Pp_{r}\gamma_{D}^{r}\EE^{inc}}{\partial r}[0,\xi]\right)\\&+(-\Psi_{E_{\kappa_{e}}}-i\eta\Psi_{M_{\kappa_{e}}}C^*_{0})\SS^{-1}\left(-C_{\kappa_{i}}\frac{\partial \Pp_{r}\gamma_{N_{\kappa_{e}}}^{r}\EE^{inc}}{\partial r}[0,\xi]\right).\end{split}\end{equation*}
The condition 1) guarantees that the solution $\jj\in\TT\HH^{\frac{1}{2}}(\Div_{\Gamma},\Gamma)$ so that the first term in the right handside are in $\HH_{loc}(\Rot,\Omega^c)$ and the second condition guarantees that the last two term  is in $\HH_{loc}(\Rot,\Omega^c)$.
Of the same for the interior field we write: 
 \begin{equation*}\begin{split}\frac{\partial \mathscr{E}^{i}}{\partial r}[0,\xi]=&-\frac{1}{\rho}\frac{\partial \Psi_{E_{\kappa_{i}}}}{\partial r}[0,\xi]\gamma_{N_{\kappa_{e}}}^c\left(\EE^{s}+\EE^{inc}\right)-\frac{\partial\Psi_{M_{\kappa_{i}}}}{\partial r}[0,\xi]\gamma_{D}^c\left(\EE^{s}+\EE^{inc}\right)\\ &-\frac{1}{\rho}\Psi_{E_{\kappa_{i}}}\frac{\partial\Pp_{r}\gamma_{N_{\kappa_{e}}}^{r}\left(\mathscr{E}^s(r)+\EE^{inc}\right)}{\partial r}[0,\xi]-\Psi_{M_{\kappa_{i}}}\frac{\partial\Pp_{r}\gamma_{D}^{r}\left(\mathscr{E}^s(r)+\EE^{inc}\right)}{\partial r}[0,\xi]\end{split}
\end{equation*}
The condition 1) guarantees that $\gamma_{N_{\kappa_{e}}}^c\left(\EE^{s}+\EE^{inc}\right)$ and $\gamma_{D}^c\left(\EE^{s}+\EE^{inc}\right)$ are in $\TT\HH^{\frac{1}{2}}(\Div_{\Gamma},\Gamma)$ so that the first two terms are in $\HH(\Rot,\Omega)$ and the second condition guarantees that the last two term  is in $\HH(\Rot,\Omega)$.
\end{proof} 
 
 \begin{theorem}The application mapping $r$ to  the far field pattern $\EE^{\infty}(\Omega_{r})\in\TT\mathscr{C}^{\infty}(S^2)$ of the solution to the scattering problem the obstacle $\Omega_{r}$ is $\mathscr{C}^{\infty}$-G\^ateaux differentiable.
\end{theorem}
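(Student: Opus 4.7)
The plan is to reuse the integral representation of the scattered field, replacing the potential operators $\Psi^r_{E_{\kappa_e}}$, $\Psi^r_{M_{\kappa_e}}$ by their far field counterparts $\Psi^{\infty,r}_{E_{\kappa_e}}$, $\Psi^{\infty,r}_{M_{\kappa_e}}$, so that
$$\EE^{\infty}(\Omega_r) = \bigl(-\Psi^{\infty,r}_{E_{\kappa_e}} - i\eta\,\Psi^{\infty,r}_{M_{\kappa_e}}\,C_0^{*,r}\bigr)\jj_r,$$
with the same density $\jj_r$ as in the near-field representation, namely the solution of $\SS^r\jj_r = -\rho(-\tfrac{1}{2}\Id + M^r_{\kappa_i})\gamma_D^r\EE^{inc} + C^r_{\kappa_i}\gamma_{N_{\kappa_e}}^r\EE^{inc}$. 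As in the proof of Theorem \ref{shapederiv}, I would insert $\Pp_r^{-1}\Pp_r = \Id$ between every two consecutive factors so that each operator becomes a map between the fixed spaces $\TT\HH^{-\frac{1}{2}}(\Div_\Gamma,\Gamma)$ and, on the output side, $\TT\mathscr{C}^\infty(S^2)$.

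Second, I would read off the differentiability of $r\mapsto\Pp_r\jj_r$ from the previous subsection: the boundary integral factors $\Pp_r C^r_\kappa \Pp_r^{-1}$, $\Pp_r M^r_\kappa \Pp_r^{-1}$, $\Pp_r C^{*,r}_0 \Pp_r^{-1}$ are already known to be $\mathscr{C}^\infty$-G\^ateaux differentiable, and Lemma \ref{d-1} applied to the invertible operator $\Pp_r\SS^r\Pp_r^{-1}$ transfers this to its inverse. The right-hand side is $\mathscr{C}^\infty$-G\^ateaux differentiable because $\EE^{inc}$ is analytic in a neighborhood of $\overline{\Omega}$, so $r\mapsto \Pp_r\gamma_D^r\EE^{inc}$ and $r\mapsto \Pp_r\gamma_{N_{\kappa_e}}^r\EE^{inc}$ are smooth of all orders (no mere $\HH^1_{\loc}(\Rot,\R^3)$ assumption is needed here as it was for Theorem \ref{shapederiv}). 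Hence $r\mapsto \Pp_r\jj_r$ is $\mathscr{C}^\infty$-G\^ateaux differentiable into $\TT\HH^{-\frac{1}{2}}(\Div_\Gamma,\Gamma)$.

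The genuinely new ingredient is the infinite G\^ateaux differentiability of the two maps
$$r\mapsto \Psi^{\infty,r}_{E_{\kappa_e}}\Pp_r^{-1},\qquad r\mapsto \Psi^{\infty,r}_{M_{\kappa_e}}\Pp_r^{-1}$$
viewed as elements of $\mathscr{L}_c\bigl(\TT\HH^{-\frac{1}{2}}(\Div_\Gamma,\Gamma),\TT\mathscr{C}^\infty(S^2)\bigr)$. After pulling back to $\Gamma$ via $\tau_r$ and using the representation of $\Pp_r^{-1}$ through $\tau_r\nabla_{\Gamma_r}\tau_r^{-1}$ and $\tau_r\Rot_{\Gamma_r}\tau_r^{-1}$, each kernel is a product of $e^{-i\kappa_e\hat{x}\cdot(y+r(y))}$, of $J_r(y)$, and of components of $\tau_r\nn_r$. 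All these factors are $\mathscr{C}^\infty$-G\^ateaux differentiable in $r$ by Lemmas \ref{J}, \ref{N}, \ref{nabla}, \ref{rot2} and \ref{delta}, while the exponential is entire, so every derivative in $r$ only produces polynomial factors in $(\hat{x},\xi(y))$ multiplied by the same analytic exponential. Crucially, since $\hat{x}\in S^2$ lies at positive distance from $\Gamma_r$, no singular kernel ever appears: there is no loss of regularity at any order, and all derivatives land continuously in $\TT\mathscr{C}^\infty(S^2)$. One may equivalently justify this step by differentiating under the sign $\lim_{|x|\to\infty}4\pi|x|e^{-i\kappa_e|x|}\cdot$, exploiting the uniformity of the asymptotics enforced by the Silver-M\"uller condition to commute $\partial/\partial r$ with the passage to the far field, already invoked in the example following Theorem \ref{P'DF}.

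Combining the two steps by the chain and product rules yields the claimed $\mathscr{C}^\infty$-G\^ateaux differentiability of $r\mapsto \EE^{\infty}(\Omega_r)$. The main obstacle is organizational rather than analytical: one must carefully propagate the fixed-space reformulation through a long composition of operators. The analytic difficulty that limited the near-field result (Theorem \ref{shapederiv}) to a single derivative and forced the regularity hypothesis on $\EE^{inc}$ simply evaporates at infinity, which is exactly the source of smoothness of all orders.
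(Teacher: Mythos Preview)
Your proposal is correct and follows precisely the route the paper sets up: the theorem is stated in the paper without an explicit proof, and the intended argument is exactly the composition you describe---the $\mathscr{C}^\infty$-G\^ateaux differentiability of the boundary integral operators $\Pp_{r}C_{\kappa}^{r}\Pp_{r}^{-1}$, $\Pp_{r}M_{\kappa}^{r}\Pp_{r}^{-1}$, $\Pp_{r}C_{0}^{*,r}\Pp_{r}^{-1}$ and of the far field operators $\Psi_{E_{\kappa_{e}}}^{\infty,r}\Pp_{r}^{-1}$, $\Psi_{M_{\kappa_{e}}}^{\infty,r}\Pp_{r}^{-1}$ (whose smooth kernels cause no loss of regularity), combined via Lemma~\ref{d-1} for $(\Pp_{r}\SS^{r}\Pp_{r}^{-1})^{-1}$ and the chain rule. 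Your observation that analyticity of $\EE^{inc}$ near $\Gamma$ gives $\mathscr{C}^{\infty}$-differentiability of $r\mapsto\Pp_{r}\gamma_{D}^{r}\EE^{inc}$ and $r\mapsto\Pp_{r}\gamma_{N_{\kappa_{e}}}^{r}\EE^{inc}$ (without the extra hypotheses needed in Theorem~\ref{shapederiv}) is the correct explanation for why the far field result holds to all orders while the near-field one is stated only at first order.
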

 
\subsection{Characterisation of the first derivative}

 The following theorem give a  caracterisation of the first G\^ateaux derivative  of $r\mapsto\mathscr{E}(r)$ in $r=0$.

\begin{theorem}Under the hypothesis of theorem \ref{shapederiv} the first derivative at $r=0$ in the direction $\xi$  solve the following scattering problem :
\begin{equation}\left\{\begin{split}\Rot\Rot\frac{\partial \mathscr{E}^{i}}{\partial r}[0,\xi]-\kappa_{i}^2\frac{\partial \mathscr{E}^{i}}{\partial r}[0,\xi]=0\\\Rot\Rot\frac{\partial \mathscr{E}^{s}}{\partial r}[0,\xi]-\kappa_{e}^2\frac{\partial \mathscr{E}^{s}}{\partial r}[0,\xi]=0\end{split}\right.\end{equation} with the boundary conditions :
\begin{equation}\left\{\begin{split}\nn\times\frac{\partial \mathscr{E}^{i}}{\partial r}[0,\xi]-\nn\times\frac{\partial \mathscr{E}^{s}}{\partial r}[0,\xi]=g_{D}\\\mu_{i}^{-1}\nn\times\Rot\frac{\partial \mathscr{E}^{i}}{\partial r}[0,\xi]-\mu_{e}^{-1}\nn\times\Rot\frac{\partial \mathscr{E}^{s}}{\partial r}[0,\xi]=g_{N},\end{split}\right.\end{equation}

where\begin{equation*}\begin{split}g_{D}=&-\left(\xi\cdot\nn\right)\nn\times\frac{\partial}{\partial\nn}\left(\EE^{i}-\EE^s-\EE^{inc}\right)\\&+\Rot_{\Gamma}(\xi\cdot\nn)\;\nn\cdot\left(\EE^{i}-\EE^s-\EE^{inc}\right),\end{split}\end{equation*} and 

\begin{equation*}\begin{split}g_{N}=&-\left(\xi\cdot\nn\right)\nn\times\frac{\partial}{\partial\nn}\left(\mu_{i}^{-1}\Rot\EE^{i}-\mu_{e}^{-1}\Rot\EE^s-\mu_{e}^{-1}\Rot\EE^{inc}\right)\\&+\Rot_{\Gamma}(\xi\cdot\nn)\;\nn\cdot\left(\mu_{i}^{-1}\Rot\EE^{i}-\mu_{e}^{-1}\Rot\EE^s-\mu_{e}^{-1}\Rot\EE^{inc}\right).\end{split}\end{equation*} and where $\dfrac{\partial\mathscr{E}^{s}}{\partial r}[0,\xi]$ satisfies the Silver-M\"uller condition.
\end{theorem}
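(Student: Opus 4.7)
The plan is to characterize the shape derivatives by pulling the interior/exterior transmission problem on $\Omega_r$ back to $\Gamma$ via $\tau_r$, differentiating at $r=0$, and identifying the resulting system with a dielectric scattering problem with prescribed jumps. The Maxwell equations and the Silver--M\"uller condition for $\partial\mathscr{E}^i/\partial r[0,\xi]$ and $\partial\mathscr{E}^s/\partial r[0,\xi]$ come essentially for free from the integral representation used in Theorem \ref{shapederiv}: both fields are sums of terms of the form $\Psi_{E_\kappa}(r)\mathsf{d}(r)$ and $\Psi_{M_\kappa}(r)\mathsf{d}(r)$ with $r$-differentiable densities $\mathsf{d}(r)$, and Theorem \ref{thpsi} shows that the shape derivatives of these potentials again take values in the solution space of $\Rot\Rot\uu-\kappa^2\uu=0$ away from $\Gamma$, together with the radiation condition; post-composition with a differentiable density preserves this property on any compact subset of $\R^3\setminus\Gamma$.

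The computation of the jumps on $\Gamma$ is the heart of the proof. Starting from the identity $(\tau_r\nn_r)\times\tau_r(\mathscr{E}^i(r)-\mathscr{E}^s(r)-\EE^{inc})=0$ on $\Gamma$, the chain rule combined with Lemma \ref{N} (which gives $\partial N/\partial r[0,\xi]=-[\nabla_\Gamma\xi]\nn$) and the material-derivative identity $\partial_r[\tau_r u(r,\cdot)]_{|r=0,\xi}=\partial_r u[0,\xi]_{|\Gamma}+(\xi\cdot\nabla)u_{|\Gamma}$ yields
\[
\nn\times\Bigl(\tfrac{\partial\mathscr{E}^i}{\partial r}[0,\xi]-\tfrac{\partial\mathscr{E}^s}{\partial r}[0,\xi]\Bigr)=[\nabla_\Gamma\xi]\nn\times(\EE^i-\EE^s-\EE^{inc})-\nn\times(\xi\cdot\nabla)(\EE^i-\EE^s-\EE^{inc}).
\]
Now I exploit that $\nn\times(\EE^i-\EE^s-\EE^{inc})=0$ on $\Gamma$, so $\EE^i-\EE^s-\EE^{inc}=\phi\nn$ with $\phi:=\nn\cdot(\EE^i-\EE^s-\EE^{inc})$, and decompose $\xi=(\xi\cdot\nn)\nn+\xi_\Gamma$. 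The identities $[\nabla_\Gamma((\xi\cdot\nn)\nn)]\nn=\nabla_\Gamma(\xi\cdot\nn)$, $[\nabla_\Gamma\xi_\Gamma]\nn=-[\nabla_\Gamma\nn]\xi_\Gamma$ (obtained by tangentially differentiating $\xi_\Gamma\cdot\nn=0$) and $(\xi_\Gamma\cdot\nabla_\Gamma)(\phi\nn)=(\xi_\Gamma\cdot\nabla_\Gamma\phi)\nn+\phi[\nabla_\Gamma\nn]\xi_\Gamma$, together with the symmetry of the Weingarten map $[\nabla_\Gamma\nn]$, make all tangential contributions of $\xi_\Gamma$ cancel in pairs. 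What remains, using $\nabla_\Gamma(\xi\cdot\nn)\times\nn=\Rot_\Gamma(\xi\cdot\nn)$, is exactly $g_D$. The analogous argument run on the second transmission condition, with $\EE^i,\EE^s,\EE^{inc}$ replaced by $\mu_i^{-1}\Rot\EE^i,\mu_e^{-1}\Rot\EE^s,\mu_e^{-1}\Rot\EE^{inc}$, yields $g_N$.

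Under the hypotheses of Theorem \ref{shapederiv} the data $(g_D,g_N)$ belong to the appropriate trace spaces, and uniqueness of the dielectric transmission problem (recalled at the end of Section \ref{ScatProb}, applied to the difference of two candidate solutions) then identifies $(\partial\mathscr{E}^i/\partial r[0,\xi],\partial\mathscr{E}^s/\partial r[0,\xi])$ as its unique solution. The main technical obstacle will be this cancellation step: both $\partial N/\partial r[0,\xi]\times(\phi\nn)$ and $\nn\times(\xi\cdot\nabla)(\phi\nn)$ produce a term proportional to $\phi\,\nn\times[\nabla_\Gamma\nn]\xi_\Gamma$, and it is only their combination with opposite signs that collapses the result to a formula depending solely on $\xi\cdot\nn$, in agreement with the classical Hadamard structure theorem of shape calculus. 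As a consistency check, the same expressions for $g_D$ and $g_N$ can be rederived by differentiating the integral representations of $\Psi_{E_\kappa}(r),\Psi_{M_\kappa}(r),C_\kappa(r),M_\kappa(r)$ displayed in Section \ref{ShapeSol} using the pseudo-homogeneous-kernel calculus of Section \ref{GDiffPH} and the shape derivatives of the surface differential operators obtained in the preceding subsection; the agreement of the two approaches is precisely what was announced in the introduction.
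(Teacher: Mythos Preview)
Your proof is correct and follows essentially the same route as the paper: the Maxwell equations and Silver--M\"uller condition are inherited from Theorem~\ref{thpsi}, and the jump conditions are obtained by differentiating the pulled-back transmission identity $(\tau_r\nn_r)\times\tau_r(\mathscr{E}^i(r)-\mathscr{E}^s(r)-\EE^{inc})=0$ at $r=0$ using Lemma~\ref{N}. The only cosmetic difference is in the final simplification: you decompose $\xi=(\xi\cdot\nn)\nn+\xi_\Gamma$ and show the $\xi_\Gamma$-contributions cancel via the symmetry of the Weingarten map, whereas the paper instead splits $\nabla=\nabla_\Gamma+\nn\,\partial_\nn$ and invokes the identity $([\nabla_\Gamma\xi]\nn)\times\nn-\nn\times([\nabla_\Gamma\nn]\xi)=\Rot_\Gamma(\xi\cdot\nn)$ directly; the two computations are equivalent.
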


\begin{proof}
 We have shown in the previous paragraph that the potential operators and their G\^ateaux derivatives satisfy the Maxwell' equations and the Silver-M\"uller condition. 
 It remains to compute the boundary conditions. We could use the integral representation as Potthast did but it would need to write  too long formula. 
  For $x\in\Gamma$ we derive in $r=0$ the expression:
\begin{equation}\label{bvder}\nn_{r}(x+r(x))\times\left(\mathscr{E}^i(r)(x+r(x))-\mathscr{E}^s(r)(x+r(x))-\EE^{inc}(x+r(x))\right)=0.\end{equation} It gives in the direction $\xi$:
$$\begin{array}{rl}0=&\dfrac{\partial\tau_{r}\nn_{r}}{\partial r}[0,\xi](x)\times\left(\EE^{i}(x)-\EE^s(x)-\EE^{inc}(x)\right)\\&+\nn(x)\times\left(\dfrac{\partial\mathscr{E}^i}{\partial r}[0,\xi](x)-\dfrac{\partial\mathscr{E}^s}{\partial r}[0,\xi](x)\right)\\&+\nn\times\left(\xi(x)\cdot\nabla\left(\EE^{i}-\EE^s-\EE^{inc}\right)\right).\end{array}$$
 We recall that $\dfrac{\partial\tau_{r}\nn_{r}}{\partial r}[0,\xi](x)=-\left[\nabla_{\Gamma}\xi\right]\nn$ and we use $$\nabla u=\nabla_{\Gamma}u+\left(\frac{\partial u}{\partial \nn}\right)\nn.$$
We obtain :
$$\begin{array}{rl}\nn(x)\times\left(\dfrac{\partial\mathscr{E}^i}{\partial r}[0,\xi](x)-\dfrac{\partial\mathscr{E}^s}{\partial r}[0,\xi](x)\right)=&\left[\nabla_{\Gamma}\xi\right]\nn\times\left(\EE^{i}(x)-\EE^s(x)-\EE^{inc}(x)\right)\\&-\nn\times\left(\xi(x)\cdot\nabla_{\Gamma}\left(\EE^{i}(x)-\EE^s(x)-\EE^{inc}(x)\right)\right)\\&-(\xi\cdot\nn)\nn\times\dfrac{\partial}{\partial\nn}\left(\EE^{i}(x)-\EE^s(x)-\EE^{inc}(x)\right).\end{array}$$

 Since the tangential component of $\EE^{i}-\EE^s-\EE^{inc}$ vanish we have:
$$\left(\xi(x)\cdot\nabla_{\Gamma}\left(\EE^{i}(x)-\EE^s(x)-\EE^{inc}(x)\right)\right)=\left(\left[\transposee{\nabla_{\Gamma}\nn}\right]\xi\right)\left(\nn\cdot\left(\EE^{i}(x)-\EE^s(x)-\EE^{inc}(x)\right)\right)$$
and $$\left[\nabla_{\Gamma}\xi\right]\nn\times\left(\EE^{i}(x)-\EE^s(x)-\EE^{inc}(x)\right)=\left(\left[\nabla_{\Gamma}\xi\right]\nn\right)\times\nn\left(\EE^{i}(x)-\EE^s(x)-\EE^{inc}(x)\right)\cdot\nn.$$
 For regular surface  we have $\nabla_{\Gamma}\nn=\transposee{\nabla_{\Gamma}\nn}$ and   $$\left(\left[\nabla_{\Gamma}\xi\right]\nn\right)\times\nn-\nn\times\left(\left[\transposee{\nabla_{\Gamma}\nn}\right]\xi\right)=\Rot_{\Gamma}\left(\xi\cdot\nn\right).$$
 We deduce the first boundary conditions. The second boundary condition corresponds to the same computation with the magnetic fields.
\end{proof}
Using the commutators \eqref{commutators} one can verify that the solution of this problem is in $\HH_{loc}(\Rot,\Omega)$ since the trace
$$\uu\in\HH^1(\Rot,\Omega)\mapsto\nn\times\frac{\partial}{\partial \nn}\uu\in\TT\HH^{-\frac{1}{2}}(\Div_{\Gamma},\Gamma)$$
is linear and continuous.

\section*{Conclusion}

In this paper we have presented a complete  shape differentiability analysis of the solution to the  dielectric scattering problem using the boundary integral equation approach. These results can be extended to many others electromagnetic boundary value problems. Thanks to the numerous computations of G\^ateaux derivatives we obtain  two  alternatives to compute the first shape derivative of the solution : either we  derive the integral representation or we solve the new boundary value problem associated to the shape derivatives with boundary integral equation method. 
Whereas this last alternative  needs boundaries of class $\mathscr{C}^2$ at least since it appears any derivatives of the normal vector, many results in this paper are still available for Lipschitz domains as for example the computations of the G\^ateaux derivatives of  all the surface differential operators of order 1 with deformations of class $\mathscr{C}^1$ only and other functionals viewed in section \ref{GDiffPH}. One can find in the litterature, the theory of pseudo-differential operators on Lipschitz domain \cite{Taylor2}, it remains to find the optimal regularity of the deformations in order that  this integral operators are still G\^ateaux differentiable. According to the Helmholtz decomposition we have on Lipschitz domain:
 
 $$ \TT\HH^{-\frac{1}{2}}(\Div_{\Gamma},\Gamma)= \nabla_{\Gamma}\left(\mathcal{H}(\Gamma) \right)\bigoplus {\Rot}_{\Gamma}\left(H^{\frac{1}{2}}(\Gamma)\slash\R\right).$$
 where
 
$$\mathcal{H}(\Gamma)=\{u\in H^1(\Gamma)\backslash\R;\;\Delta_{\Gamma}\in H^{-\frac{1}{2}}_{*}(\Gamma)\}.$$
If we want to extend the result to Lipschitz domain, we have to  construct another invertible operator between $\mathcal{H}(\Gamma_{r})$ and $\mathcal{H}(\Gamma)$.

 \nocite{*}
 \small
 \bibliographystyle{siam}

\bibliography{refshapederivative}
   \end{document}